  \crefname{theorem}{Theorem}{Theorems}
  \crefname{thm}{Theorem}{Theorems}
  \crefname{lemma}{Lemma}{Lemmas}
  \crefname{lem}{Lemma}{Lemmas}
  \crefname{remark}{Remark}{Remarks}
  \crefname{prop}{Proposition}{Propositions}
  \crefname{proposition}{Proposition}{Propositions}
\crefname{notation}{Notation}{Notations}
\crefname{claim}{Claim}{Claims}
  \crefname{defn}{Definition}{Definitions}
  \crefname{corollary}{Corollary}{Corollaries}
  \crefname{section}{Section}{Sections}
  \crefname{figure}{Figure}{Figures}
  \crefname{exercise}{Exercise}{Exercises}
    \crefname{assumption}{Assumption}{Assumptions}
\newtheorem{thm}{Theorem}[section]
\newtheorem{lemma}[thm]{Lemma}
\newtheorem{corollary}[thm]{Corollary}
\newtheorem{prop}[thm]{Proposition}
\newtheorem{proposition}[thm]{Proposition}
\newtheorem{defn}[thm]{Definition}
\numberwithin{equation}{section}
\theoremstyle{definition}
\newtheorem{remark}[thm]{Remark}
\def \min {\text{min}}
\def\cM{\mathcal{M}}
\def\cL{\mathcal{L}}
\def\cH{\mathcal{H}}
\def\cG{\mathcal{G}}
\def\cF{\mathcal{F}}
\def\cE{\mathcal{E}}
\def\cD{\mathcal{D}}
\def\cC{\mathcal{C}}
\def\cB{\mathcal{B}}
\def\cA{\mathcal{A}}
\def \ve {\varepsilon}
\def \ms {\mathsf}
\def\P{\mathbb{P}}
\def\E{\mathbb{E}}
\def\R{\mathbb{R}}
\def\Z{\mathbb{Z}}
\def\N{\mathbb{N}}
\def\R{\mathbb{R}}
\def\1{\mathbf{1}}
\def  \p- {p\textunderscore}
\DeclareMathOperator{\dist}{dist}
\date{}
\title{Proper $3$-colorings of $\Z^2$ are Bernoulli}
\date{\today}
\author{Gourab Ray}
\address{Gourab Ray\hfill\break
	University of Victoria\\
	Department of Mathematics\\
	Victoria, BC, V8W 2Y2, Canada.}
\author{Yinon Spinka}
\address{Yinon Spinka\hfill\break
	University of British Columbia\\
	Department of Mathematics\\
	Vancouver, BC, V6T 1Z2, Canada.}
\begin{document}

\maketitle
\begin{abstract}
We consider the unique measure of maximal entropy for proper 3-colorings of $\Z^2$, or equivalently, the so-called zero-slope Gibbs measure.
Our main result is that this measure is Bernoulli, or equivalently, that it can be expressed as the image of a translation-equivariant function of independent and identically distributed random variables placed on $\Z^2$. Along the way, we obtain various estimates on the mixing properties of this measure.
\end{abstract}

{\section{Introduction}
A (proper) \textbf{$3$-coloring} of a graph is an assignment of one of three colors, say from $\{0,1,2\}$, to each of its vertices so that no two adjacent vertices receive the same color.
In this paper, we are concerned with 3-colorings of the square lattice $\Z^2$ with nearest-neighbor adjacency. Specifically, our goal is to show that a certain natural translation-invariant measure on the space of such 3-colorings is \textbf{Bernoulli}, meaning that it is isomorphic as a measure-preserving dynamical system to an i.i.d.\ process on $\Z^2$, i.e, there is an invertible measure-preserving map from an i.i.d.\ process to it, which is defined almost everywhere and commutes with all translations of $\Z^2$. Alternatively, being Bernoulli is equivalent to being a factor of an i.i.d.\ process.

A vertex of $\Z^2$ is even if the sum of its two coordinates is even.
Let $D$ be a finite subset of $\Z^2$ and let $\partial D$ denote its internal vertex boundary, namely, the set of vertices in $D$ that have a neighbor outside $D$.
Let $\mu^{01}_D$ be the uniform measure on the set of all 3-colorings of $D$ whose values on $\partial D$ are fixed to be 0 and 1 on even and odd vertices, respectively. We shall show that $\mu^{01}_D$ converges as $D \uparrow \Z^2$ (for sufficiently nice $D$ such as boxes) to a translation-invariant measure $\mu$ on 3-colorings of $\Z^2$.
It is this measure that we are concerned with here. Our main result is the following.


\begin{thm}\label{thm:main} 
The measure $\mu$ is Bernoulli.
\end{thm}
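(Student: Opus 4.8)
The plan is to exhibit $\mu$ as a factor of an i.i.d.\ process on $\Z^2$; since, as recalled in the introduction, this is equivalent to being Bernoulli, and since a factor of a Bernoulli $\Z^2$-action is again Bernoulli (Ornstein--Weiss), this suffices. The natural setting is the height-function representation: a proper $3$-coloring $\sigma$ is the reduction mod $3$ of an integer-valued function $h$ on $\Z^2$ all of whose nearest-neighbor increments equal $\pm 1$ (compatibly with the parity of the sublattice), with $h$ determined by $\sigma$ up to a global additive constant. Under this correspondence the $01$-boundary condition of $\mu^{01}_D$ becomes the flattest possible boundary height, so $\mu$ is precisely the zero-slope, maximal-entropy random height function — equivalently, via the classical correspondence between uniform $3$-colorings of $\Z^2$ and an exactly solvable six-vertex model at its ice point. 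The level lines of $h$ (the dual edges separating $\{h=k\}$ from $\{h=k+1\}$) form a loop ensemble, and a local modification of $\sigma$ amounts to a local surgery on those loops. Passing to $h$ is what makes available quantitative estimates that are invisible at the level of $\sigma$ alone — most importantly, control of how far level lines travel.

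The heart of the proof is a conditional spatial-mixing estimate for $\mu$. I would show that, for a fixed box $B_n$ sitting inside a much larger box $B_N$ and conditioned on a $\mu$-typical configuration of $\sigma$ outside $B_N$, the conditional law of $\sigma$ on $B_n$ is within $\bar d$-distance $o(1)$, as $N/n\to\infty$, of a reference law on $B_n$ that does not see the distant conditioning; i.e.\ the two admit a coupling under which the expected density of disagreements in $B_n$ is small. The mechanism is \emph{screening}: with probability tending to $1$ there is an annular ring around $B_n$ on which no level line winds, and on that event the coloring inside is governed only by the (flat) coloring on the ring, after which one compares the resulting boundary-condition-only measures across different rings by a monotone coupling of height functions. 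The obstacle here is genuine and is the crux of the paper: the model is critical, the height function delocalizes with fluctuations of order $\log(\text{distance})$ and correlations decaying only polynomially, so no single annulus insulates with high probability and there is no finite screening region of the kind used in all earlier work on Bernoullicity of Gibbs measures (van den Berg--Steif and successors, which require exponential spatial mixing). One must instead (i) establish a quantitative, boundary-condition-uniform bound on the probability that a level line crosses a prescribed annulus, and (ii) iterate such one-annulus estimates over a geometric hierarchy of scales to assemble a genuine coupling on all of $B_n$. Step (i) is where one exploits the loop representation together with sharp control — from the exactly solvable ice-point structure and/or large-deviation estimates for the height function — of level-line crossings; these are the mixing estimates advertised in the abstract.

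Granting the screening estimate, I would produce the coding by a coupling-from-the-past scheme: place i.i.d.\ uniform variables at the sites of $\Z^2$, and to define $\sigma$ on $B_n$, successively sample its conditional law given larger and larger flat rings, using the i.i.d.\ bits to realize these samples together with the couplings supplied by the mixing estimate. The screening estimate forces the sampled values on $B_n$ to stabilize almost surely as the revealed region grows; their common limit defines $\sigma$ on $B_n$, and applying the same rule at every translate yields an a.e.-defined, translation-equivariant map whose pushforward is $\mu$. Alternatively, one can bypass the explicit coding and deduce Bernoullicity by verifying the very-weak-Bernoulli (equivalently, finitely-determined) criterion for $\mu$ directly from the same $\bar d$-screening estimate and invoking the Ornstein--Weiss characterization; either route completes the proof. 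In short, essentially all of the difficulty is concentrated in the spatial-mixing estimates — because the $3$-coloring height function is unbounded and log-correlated, screening can hold only in an averaged, scale-by-scale sense and must be proved uniformly over the boundary data one conditions on — while the passage from those estimates to a factor of i.i.d.\ is comparatively soft.
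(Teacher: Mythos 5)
Your outline correctly identifies the verification criterion (very weak Bernoulli / Ornstein--Weiss), correctly passes to the height-function representation, and correctly diagnoses the central difficulty (criticality, delocalization, no single-annulus screening). The paper in fact verifies the slightly stronger F{\o}lner independence condition via an explicit coupling, so at that level the structure matches. But the actual screening mechanism you describe does not work as stated, and the mechanism the paper uses is quite different and nontrivial. You propose to find a ring ``on which no level line winds'' with ``flat coloring,'' but because $h$ is delocalized there is no flat ring with appreciable probability: what one can find is a monochromatic (color-$0$) $\times$-loop, and the height of $h$ on such a loop is an unbounded random multiple of $6$. Two independent samples $h,h'$ will in general sit on monochromatic loops at \emph{different} heights, so matching such loops does not let you couple the laws inside --- the boundary conditions for the enclosed height functions disagree. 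The missing ingredient is the FKG inequality for $|h|$ (Proposition~\ref{prop:FKG_modh}, distinct from FKG for $h$), used as follows: inside a color-$0$ loop of $h'$ one reveals $|h|$ and $|h'|$ simultaneously maintaining $|h|\ge|h'|$; when the exploration hits the outermost level loop of $h$ at height exactly $0$ (not merely $0\bmod 6$), the domination forces $h'$ to be $0$ on the same loop, and only then can the two be glued identically inside via the domain Markov property. Whether $h$'s next $0\bmod 6$ loop actually has height $0$ is controlled by the random-walk structure of the heights on the nested essential level loops, which hits $0$ only with probability of order $1/\sqrt{\text{(number of scales)}}$, so the whole thing must be iterated, alternating between the coloring picture (to refresh at color-$0$ loops) and the height picture (to try for height-$0$ loops). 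None of this is visible in ``monotone coupling of height functions'' as you state it.

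Two secondary points. First, you invoke ``the exactly solvable ice-point structure'' as the source of the quantitative level-line estimates; the paper does not use integrability at all --- the crossing estimates come from the recently developed Russo--Seymour--Welsh theory for $\Z$-homomorphisms \cite{duminil2019logarithmic,chandgotia2018delocalization}, which is a percolation-style argument resting on FKG and duality, not on the Bethe ansatz. Second, your coupling-from-the-past coding is unnecessary overhead once one has the $\bar d$-estimate; as you yourself note at the end, verifying very weak Bernoulli directly is simpler, and that is what the paper does (via the intermediate notion of ``local mixing,'' which is shown to be equivalent to F{\o}lner independence). So the overall shape of your argument is right, the hard part is correctly located, but the concrete device that resolves the hard part --- FKG for $|h|$ combined with the random-walk-on-level-loop-heights argument and the alternation between representations --- is the real content and is absent from the proposal.
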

The key step in the proof is to convert the proper 3-colorings into height functions corresponding homomorphisms from $\Z^2$ to $\Z$ via a bijection.
Let us make several quick remarks. Firstly, the limiting measure $\mu$ does not depend on the specific choice of boundary condition used to define $\mu^{01}_D$. To be more precise, for $\xi \in \{0,1,2\}^{\partial D}$, let $\Omega^\xi_D$ be the set of all 3-colorings of $D$ which agree with $\xi$ on $\partial D$. We refer to $\xi$ as a boundary condition. Suppose that $\Omega^\xi_D$ is non-empty and let $\mu^\xi_D$ denote the uniform measure on $\Omega^\xi_D$. The convergence of $\mu^\xi_D$ to $\mu$ holds for a larger class of boundary conditions, namely, those whose oscillations (in terms of the associated height function) are of smaller order than the square-root of the logarithm of the in-radius of $D$; see \cref{rmk:sqrtlog}.


Standard arguments imply that $\mu$ is a Markov random field and a uniform Gibbs measure for 3-colorings of $\Z^2$, meaning that, for any finite set $D \subset \Z^2$ and any boundary condition $\xi \in \{0,1,2\}^{\partial D}$ such that $\mu(\Omega^\xi_D)>0$, conditioned on $\Omega^\xi_D$, the coloring on $D$ has distribution $\mu^\xi_D$ and is independent of the coloring on $\Z^2 \setminus D$.

Our last remark concerns the notion of a measure of maximal entropy.
We shall not define this notion precisely (and we shall not need it), but simply mention that it roughly means that the restriction of the measure to a large box has Shannon entropy which is nearly as large as possible on a volume scale. The topological entropy of 3-colorings of $\Z^2$ has been computed by Lieb~\cite{lleb2004residual} to be $\frac32 \log \frac43$, which means that the number of 3-colorings of an $n$-by-$n$ grid grows like $(4/3)^{\frac32 n^2 (1+o(1))}$ as $n \to \infty$.
It is known that $\mu$ is a measure of maximal entropy for 3-colorings of $\Z^2$~(see~\cite[Theorem~1.2]{galvin2012phase} for an elementary proof). Furthermore, it follows from the results in~\cite{sheffield2005random,chandgotia2018delocalization} (with some minor additional arguments) that there is a unique measure of maximal entropy for 3-colorings of $\Z^2$. Thus, our main theorem can be formulated concisely as \emph{the unique measure of maximal entropy for 3-colorings of $\Z^2$ is Bernoulli}.

The proof of \cref{thm:main} relies crucially on the Russo--Seymour--Welsh theory for homomorphisms from $\Z^2$ to~$\Z$ recently developed in~\cite{chandgotia2018delocalization,duminil2019logarithmic}.
This also allows us to establish a quantitative power-law mixing condition, which we regard as the main probabilistic content of this paper; see \cref{thm:colorings-are-locally-mixing}.

%
%

\medbreak

\noindent\textbf{Related results.}
Bernoullicity is a type of mixing condition. Several related mixing conditions for translation-invariant random fields on $\Z^d$ in increasing order of strength are ergodicity, weak mixing, (strong) mixing, $k$-fold mixing ($k \ge 3$), Bernoullicity, and finitary factor of iid.
For Markov random fields, an additional condition called K is equivalent to full tail triviality~\cite{den1997k} and is between $k$-fold mixing and Bernoullicity.
Slawny~\cite{slawny1981ergodic} gave examples of measures which are ergodic but not weakly mixing (for $d \ge 2$), measures which are weakly mixing but not mixing (for $d \ge 3$), and measures which are mixing but not 3-fold mixing (for $d \ge 3$).
In two dimensions, Ledrappier~\cite{ledrappier1978champ} gave a simple construction of a zero-entropy Markov random field which is mixing but not 3-fold mixing, and Hoffman~\cite{hoffman1999markov,hoffman2004family} constructed Markov random fields which are K but not Bernoulli.
Van den Berg and Steif~\cite{van1999existence} showed the existence of Markov random fields which are Bernoulli but not finitary factors of i.i.d.\ processes (for $d \ge 2$).

The question of determining whether a given random field is Bernoulli is non-trivial and has received much attention in the literature. We mention the following open question of van den Berg and Steif~\cite[Question~3]{van1999existence}: if a translation-invariant Markov random field is the unique Markov random field with its conditional probabilities, is it necessarily Bernoulli? It is remarked there that such a Markov random field is known to be K (equivalently, full tail trivial).

Let us discuss the situation for 3-colorings of $\Z^d$ for $d \ge 3$. In general, the convergence of the finite-volume measures $\mu^{01}_D$ (defined as before) is not known. In sufficiently high dimensions, convergence holds along `nice' domains of a given parity (so-called even or odd domains)~\cite{feldheim2015long}. The limiting measures in this case depend on the parity of the domain~\cite{peled2017high,galvin2012phase}. In particular, these measures are not translation-invariant, but rather invariant only with respect to parity-preserving translations (or other automorphisms). This leads to the fact that the translation-invariant measures of maximal entropy are not mixing and thus also not Bernoulli (as they are mixtures of distinct extremal measures). Nevertheless, when restricting to the action of the subgroup of parity-preserving translations, these measures become Bernoulli (in fact, they are quite weak Bernoulli with exponential rate; see~\cite[Lemma~6.4]{feldheim2015long}).

Let us also remark about the situation for $q$-colorings of $\Z^d$ for $q \ge 4$ and $d \ge 2$. For a fixed number of colors $q$, in high enough dimensions ($d \ge Cq^{10}\log^3 q$ suffices), the situation is similar to that of $3$-colorings in that the measures obtained from fixed-color boundary conditions are not translation-invariant, but they are invariant to parity-preserving automorphisms~\cite{peled2018rigidity}. On the other hand, in any given dimension, when the number of colors is large ($q>4d$ suffices), there is a unique Gibbs measure, the finite-volume measures (with any boundary conditions) converge to this measure, and this measure is translation-invariant and strong spatial mixing (this all follows from Dobrushin's uniqueness condition; see, e.g., \cite{peled2020lectures}).
Consequently, this measure is Bernoulli, and in fact, also a finitary factor of an i.i.d.\ process~\cite{spinka2018finitarymrf}.
In two dimensions, it is known that $q$-colorings satisfy strong spatial mixing for any $q \ge 6$~\cite{achlioptas2005rapid,goldberg2006improved}, and hence, in this case, the unique Gibbs measure is also a finitary factor of an i.i.d.\ process~\cite{spinka2018finitarymrf}. The situation is still open for 4 and 5 colors in two dimensions. It would be interesting to determine whether the measure on 3-colorings of $\Z^2$ studied here is also a finitary factor of an i.i.d.\ process. This is a particular instance of a question raised by Steif~\cite[Question~18.2]{boyle2008open}: if a subshift of finite type (in $\Z^d$, $d \ge 2$) has a unique measure of maximal entropy which is Bernoulli, must it be a finitary factor of an i.i.d.\ process?

Proper $q$-colorings may be viewed as the zero-temperature antiferromagnetic $q$-state Potts model. The 2-state Potts model is known as the Ising model.
Ornstein and Weiss~\cite{OW73} (see also~\cite{adams1992folner}) showed that the plus state of the ferromagnetic Ising model on~$\Z^d$ at any positive temperature is Bernoulli.
H{\"a}ggstr{\"o}m, Jonasson and Lyons~\cite{haggstrom2002coupling} extended this to the ferromagnetic Potts model for any $q \ge 2$.
Our result shows this for (a certain Gibbs measure of) the zero-temperature antiferromagnetic 3-state Potts model. It is natural to expect that this extends to positive temperature. However, as our proof relies crucially on the height function representation for 3-colorings, which does not extend to positive temperature, we are unable to answer this question.


\section{Locally mixing measures}
\label{sec:locally-mixing-measures}

Several conditions related to Bernoullicity have been introduced in the literature, among which are weak Bernoulli, very weak Bernoulli, quite weak Bernoulli and F{\o}lner independence. In this section, we introduce a new notion, which we call local mixing, and present some general results about it. The results in this section apply to random fields on $\Z^d$ in any dimension~$d$. All measures here are probability measures.

It will be useful for us to define the notion of local mixing for a family of measures, rather than just for a single measure.
A \textbf{rate function} is any decreasing function $\rho \colon \N \to [0,\infty)$ such that $\rho(k) \to 0$ as $k \to \infty$. Let $\Lambda_n$ denote the box $[-n,n]^d \cap \Z^d$.

\begin{defn}(locally mixing)\label{def:local-mixing}
Let $\cA$ be finite and let $\cM$ be a collection of pairs $(\mu,D)$ such that $D \subset \Z^d$ and $\mu$ is a measure on $\cA^D$. We say that $\cM$ is \textbf{locally mixing} with rate function $\rho$ if the following holds. Let $n \ge 1$ and let $(\mu,D),(\mu',D') \in \cM$ be such that $\Lambda_n \subset D \cap D'$. Then there exists a coupling of $f \sim \mu$ and $f' \sim \mu'$ such that
\begin{enumerate}
 \item $f_{|D \setminus \Lambda_n}$ and $f'$ are independent.
 \item $\P(f(v) \neq f'(v)) \le \rho(n-k)$ for any $0 \le k \le n$ and $v \in \Lambda_k$.
\end{enumerate}
\end{defn}

We say that $\cM$ is locally mixing if it locally mixing with some rate function.
We stress that there is no restriction on the sets $D$, and, in particular, some of them  may be infinite. Clearly, if a family $\cM$ is locally mixing, then so is any subset of it (with the same rate function).

The notion of local mixing also makes sense for a single measure.
We say that a measure $\mu$ on $\cA^{\Z^d}$ is locally mixing if $\cM=\{(\mu,\Z^d)\}$ is. The following simple proposition shows that a locally mixing family gives rise to a unique limiting measure, which is itself locally mixing.

\begin{prop}\label{prop:locally-mixing-has-limit}
Let $\cM=\{(\mu_i,D_i)\}_{i=1}^\infty$ be locally mixing with $D_i \uparrow \Z^d$ as $i \to \infty$. Then $\mu_i$ converges as $i \to \infty$ to a measure $\mu$ on $\cA^{\Z^d}$ which is locally mixing with the same rate function.
\end{prop}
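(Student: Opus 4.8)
The plan is to prove the two assertions of the proposition in order — first weak convergence of the $\mu_i$, then local mixing of the limit — using only property~(2) of \cref{def:local-mixing} for the first part and both properties for the second. Throughout one uses that $\cA^{\Z^d}$ is compact and metrizable, so that a sequence of probability measures on it converges weakly iff all cylinder probabilities converge. For convergence, fix a finite set $S\subseteq\Lambda_k$ and show $(\mu_i|_S)_i$ is Cauchy in total variation. Given $\varepsilon>0$, pick $n\ge k$ with $|S|\,\rho(n-k)<\varepsilon$ (possible since $\rho(m)\to0$); since $D_i\uparrow\Z^d$ and $\Lambda_n$ is finite, there is $N$ with $\Lambda_n\subseteq D_i$ for all $i\ge N$, so for $i,j\ge N$ we have $\Lambda_n\subseteq D_i\cap D_j$ and \cref{def:local-mixing} applied to $(\mu_i,D_i),(\mu_j,D_j)\in\cM$ gives a coupling of $f\sim\mu_i$, $f'\sim\mu_j$ with $\P(f(v)\neq f'(v))\le\rho(n-k)$ for every $v\in S$. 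A union bound then gives $d_{\mathrm{TV}}(\mu_i|_S,\mu_j|_S)\le\P(f_{|S}\neq f'_{|S})\le|S|\,\rho(n-k)<\varepsilon$. Hence every finite-dimensional marginal converges; the limiting marginals are consistent, so there is a unique probability measure $\mu$ on $\cA^{\Z^d}$ realizing them, and $\mu_i\to\mu$.

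To show $\{(\mu,\Z^d)\}$ is locally mixing with the \emph{same} rate $\rho$, fix $n\ge1$ and build the required coupling as a subsequential limit of the ones supplied by the hypothesis. For each $i$ with $\Lambda_n\subseteq D_i$, apply \cref{def:local-mixing} to the pair $(\mu_i,D_i)$ with itself to get a coupling $\pi_i$ of $f_i,f_i'\sim\mu_i$ on $\cA^{D_i}\times\cA^{D_i}$ such that $(f_i)_{|D_i\setminus\Lambda_n}$ and $f_i'$ are independent and $\P(f_i(v)\neq f_i'(v))\le\rho(n-k)$ for all $v\in\Lambda_k$, $0\le k\le n$. Each $\pi_i$ is determined by its marginals on the finite product sets $\cA^S\times\cA^T$ (for $S,T\subseteq\Z^d$ finite, once $S\cup T\subseteq D_i$), each of which lies in a compact space, so a diagonal extraction over the countably many such pairs yields a subsequence along which all these marginals converge; let $\pi$ be the resulting probability measure on $\cA^{\Z^d}\times\cA^{\Z^d}$. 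Its first and second marginals are $\lim_i\mu_i=\mu$, so $\pi$ couples $\mu$ with itself. Property~(2) passes to the limit because $\{f(v)\neq f'(v)\}$ is a cylinder event, so $\P_\pi(f(v)\neq f'(v))=\lim_i\P_{\pi_i}(f_i(v)\neq f_i'(v))\le\rho(n-k)$. Property~(1) passes to the limit because independence of $f_{|\Z^d\setminus\Lambda_n}$ and $f'$ under $\pi$ is equivalent to the factorization $\P_\pi(f_{|S}=\sigma,\,f'_{|T}=\tau)=\P_\pi(f_{|S}=\sigma)\,\P_\pi(f'_{|T}=\tau)$ for all finite $S\subseteq\Z^d\setminus\Lambda_n$, finite $T\subseteq\Z^d$, $\sigma\in\cA^S$, $\tau\in\cA^T$; for large $i$ one has $S\subseteq D_i$, hence $S\subseteq D_i\setminus\Lambda_n$, and $T\subseteq D_i$, so this identity holds for $\pi_i$ by property~(1), and it survives passage to the limit along the chosen subsequence.

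The only step that needs genuine care is this last one: one must verify that the finitary factorization identities really do characterize independence of the infinite-dimensional objects $f_{|\Z^d\setminus\Lambda_n}$ and $f'$ (they do — cylinder events form a $\pi$-system generating the relevant $\sigma$-algebras) and that the index bookkeeping works out, i.e.\ that a fixed finite $S$ disjoint from $\Lambda_n$ lies in $D_i\setminus\Lambda_n$ for all large $i$. Everything else is soft: compactness of $\cA^{\Z^d}$, the coupling inequality, and the observation that no quantitative input beyond the single rate function $\rho$ ever enters — which is precisely why the limit measure inherits exactly the same $\rho$.
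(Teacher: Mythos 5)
Your proof is correct and follows essentially the same route as the paper: the first part (Cauchyness of finite-dimensional marginals via the coupling and a union bound) is identical to the paper's argument, and the second part makes explicit, via a standard diagonal/compactness extraction, what the paper dismisses as ``straightforward from the convergence.'' The only cosmetic difference is that the paper asserts the slightly stronger statement that $\cM \cup \{(\mu,\Z^d)\}$ is locally mixing, whereas you go directly to the self-coupling of $\mu$, which is all that is needed for the conclusion.
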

\begin{proof}
Suppose that $\cM$ is locally mixing with rate function $\rho$. To establish the convergence, it suffices to show that, for any $i,j,k,n$ such that $\Lambda_k \subset \Lambda_n \subset D_i \cap D_j$,
\[ \dist_{\text{TV}}((\mu_i)_{|\Lambda_k}, (\mu_j)_{|\Lambda_k}) \le |\Lambda_k| \cdot \rho(n-k) .\]
Indeed, sampling $f \sim \mu_i$ and $f' \sim \mu_j$ from a coupling as in the definition of local mixing, the inequality follows by a union bound. Let $\mu$ be the limiting measure. It is straightforward from the convergence that $\cM \cup \{(\mu,\Z^d)\}$ is locally mixing. In particular, $\mu$ is locally mixing.
\end{proof}

A key property of local mixing is that it implies two other mixing properties (for translation-invariant measures): full tail triviality (which implies strong mixing and ergodicity) and Bernoullicity. The former (which we do not define here) easily follows from the definition of local mixing. The latter is stated in the following proposition.

\begin{prop}\label{prop:locally-mixing-is-Bernoulli}
Any translation-invariant locally mixing measure on $\cA^{\Z^d}$ is Bernoulli.
\end{prop}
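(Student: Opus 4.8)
The plan is to verify one of the standard combinatorial criteria for Bernoullicity --- I would use the very weak Bernoulli (VWB) condition of Ornstein--Weiss, or equivalently quite weak Bernoulli / F\o{}lner independence, since these are known to characterize Bernoulli $\Z^d$-actions. Concretely, one fixes a large box $\Lambda_n$ and must show that the conditional law of the coloring on $\Lambda_n$, given the configuration on a far-away region (say $\Z^d \setminus \Lambda_N$ for $N \gg n$, or more precisely given a half-space or an annular ``past''), is $\bar d$-close (per-site Hamming distance) to the unconditional law on $\Lambda_n$, uniformly over the conditioning, with an error that can be made arbitrarily small by taking $N$ large. The translation-invariant $\bar d$-distance being what appears in the definition, the key point is to couple $\mu_{|\Lambda_n}$ with $\mu(\,\cdot \mid \text{outside configuration})_{|\Lambda_n}$ so that the expected fraction of disagreements is small.

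The coupling is supplied almost directly by local mixing. First I would record that by \cref{prop:locally-mixing-has-limit} the limiting measure $\mu$ is itself locally mixing, so $\{(\mu,\Z^d)\}$ has a rate function $\rho$. Now, to compare $\mu$ with a conditioned version $\mu^\eta := \mu(\,\cdot\mid f_{|\Z^d\setminus \Lambda_N} = \eta)$: by the Markov/Gibbs property (which holds for $\mu$; it is a uniform Gibbs measure), $\mu^\eta$ restricted to $\Lambda_N$ is just $\mu^{\xi}_{\Lambda_N}$ for the induced boundary condition $\xi = \eta_{|\partial \Lambda_N}$, a finite-volume measure. So the pair $(\mu^\xi_{\Lambda_N}, \Lambda_N)$ together with $(\mu,\Z^d)$ should both lie in a locally mixing family --- this is exactly the flexibility built into \cref{def:local-mixing}, where $\cM$ is allowed to mix finite-volume measures with the infinite-volume one, and where one is allowed to condition on the exterior. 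Assuming the locally mixing family used to construct $\mu$ is rich enough to contain these conditioned finite-volume measures (which is the natural setup, and which will be arranged in the application of this proposition to $3$-colorings), local mixing with $D = \Lambda_N$, $D' = \Z^d$, and $\Lambda_n \subset \Lambda_N$ gives a coupling of $f\sim\mu^\xi_{\Lambda_N}$ and $f'\sim\mu$ with $\P(f(v)\neq f'(v)) \le \rho(N-n)$ for every $v\in\Lambda_n$ (take $k=n$). Hence the expected per-site Hamming distance between the two laws on $\Lambda_n$ is at most $\rho(N-n)$, which tends to $0$ as $N\to\infty$. This is precisely the VWB/F\o{}lner-independence estimate, uniformly in the boundary condition $\xi$, so $\mu$ is very weak Bernoulli and therefore Bernoulli.

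A couple of bookkeeping points will need care but are routine. One must phrase VWB in the $\Z^d$-formulation correctly: the conditioning should be on an appropriate ``past'' (e.g.\ a growing sequence of F\o{}lner-type tail $\sigma$-algebras, or the complement of boxes along a sweep), and one checks that the coupling above --- which handles conditioning on the complement of a box --- yields the stated decay for all such conditionings by monotonicity / further conditioning and a union bound over the finitely many sites of the test box. Translation-invariance of $\mu$ is used to make the $\bar d$-estimate uniform over translates of $\Lambda_n$, and to invoke the Ornstein--Weiss theorem in the stated form.

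The main obstacle is conceptual rather than computational: one has to make sure the conditioned finite-volume measures genuinely fall under the scope of the locally mixing family (or, alternatively, derive a conditional local-mixing statement directly from the definition, perhaps using the independence clause (1) in \cref{def:local-mixing} to handle the conditioning on $f_{|D\setminus\Lambda_n}$). Clause (1) --- that $f_{|D\setminus\Lambda_n}$ is independent of $f'$ --- is the feature that lets one pass from ``close in distribution'' to ``close in distribution even after conditioning on the far-away part'', and wiring that correctly into the VWB definition is the delicate step. Everything else --- the union bound over $|\Lambda_n|$ sites, the translation to $\bar d$-distance, and citing Ornstein--Weiss --- is standard.
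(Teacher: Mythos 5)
There is a genuine gap at the central step. The proposition assumes only that the single measure $\mu$ is locally mixing, i.e.\ that $\{(\mu,\Z^d)\}$ is locally mixing; it does not assume that $\mu$ is a Markov random field or Gibbs measure, nor that there is an ambient locally mixing family containing the conditioned finite-volume measures $\mu(\,\cdot \mid f_{|S}=\xi)$. Your main route relies on both: you rewrite the conditioned measure as $\mu^\xi_{\Lambda_N}$ via a Gibbs property and then assume the pair $(\mu^\xi_{\Lambda_N},\Lambda_N)$ lies in the family. Neither assumption is available here, and it is not ``arranged in the application'' either: F{\o}lner independence requires controlling the conditioning on an \emph{arbitrary} finite $S\subset\Z^d\setminus\Lambda_n$ and arbitrary feasible $\xi\in\cA^S$, and such conditioned measures are not among the $\mu^{ij}_D$ of the family used for $3$-colorings. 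Moreover, even granting your assumption, you aim for a bound \emph{uniform} in $\xi$; the definition of F{\o}lner independence only asks for the bound outside a set of $\xi$ of $\mu$-measure $\ve$, and uniformity is neither needed nor, in general, true.

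The alternative you gesture at in your last paragraph is the correct proof, but it is left unexecuted, and it is the whole content of the argument. Concretely: take the coupling from the definition with $D=D'=\Z^d$ and two copies $f,f'\sim\mu$. Fix $m<n$ and let $X$ be the fraction of $v\in\Lambda_n$ with $f(v)\neq f'(v)$; splitting $\Lambda_n$ into $\Lambda_m$ and $\Lambda_n\setminus\Lambda_m$ gives $\E X\le \rho(n-m)+|\Lambda_n\setminus\Lambda_m|/|\Lambda_n|=:\ve^2$ (this splitting is needed because $\rho(n-k)$ is only small for $v$ well inside $\Lambda_n$; your proposal takes $k=n$, where the definition gives only $\rho(0)$, which need not be small). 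Clause (1) implies that $f'$ is independent of $f_{|S}$, so conditioning on $f_{|S}=\xi$ turns $(f_{|\Lambda_n},f'_{|\Lambda_n})$ into a coupling of $\mu|^\xi_{\Lambda_n}$ with the \emph{unconditional} $\mu|_{\Lambda_n}$, whence $\bar d(\mu|^\xi_{\Lambda_n},\mu|_{\Lambda_n})\le Y(\xi):=\E[X\mid f_{|S}=\xi]$. Markov's inequality applied to $Y$ then gives $\bar d\le\ve$ for all $\xi$ outside a set of measure $\ve$, which is exactly F{\o}lner independence with $m=n-\sqrt n$, say. Your framework (VWB/FI plus the Ornstein--Weiss--Katznelson theorem) is the right one, but without this conditioning-plus-Markov step the proof is not there.
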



Our strategy for proving \cref{prop:locally-mixing-is-Bernoulli} is to verify a classic condition called \textbf{very weak Bernoulli}, which is known to be equivalent to Bernoulli. In fact, we verify a stronger condition called \textbf{F{\o}lner independence}~\cite{adams1992folner}, which we now proceed to define.


We begin by defining the so-called $\bar d$-distance between two measures $\nu$ and $\lambda$ on $\cA^V$, with $\cA$ and $V$ finite. Roughly speaking, the $\bar d$-distance is small if one can couple samples of $\nu$ and $\lambda$ so that they tend to agree on most elements of $V$. Precisely, the \textbf{$\bar d$-distance} between $\lambda$ and $\nu$ is
$$
\bar d(\nu, \lambda) := \inf_{\substack{(X,Y)\\X\sim \nu, Y \sim \lambda}}  \left \{ \frac1{|V|} \sum_{v \in V} \P({X(v) \neq Y(v)}) \right \},
$$
where the infimum is taken over couplings of random variables $X$ and $Y$ with distributions $\nu$ and $\lambda$, respectively.

Consider now a translation-invariant measure $\mu$ on $\cA^{\Z^d}$. Loosely speaking, $\mu$ is F{\o}lner independent if for most conditionings outside a large box, the conditional measure inside the box is close in $\bar d$-distance to the unconditional measure inside the box. Given a set $U \subset \Z^d$, we denote by $\mu|_U$ the restriction of $\mu$ to $U$. Given a finite set $B \subset \Z^d$ and a feasible $\xi \in \cA^B$ (by feasible we mean that $\mu(\xi)>0$), we denote by $\mu|^\xi_U$ the restriction of $\mu$ to $U$ when conditioned on $\xi$. Here and throughout the paper, we sometimes identify a configuration $\xi \in \cA^B$ with the event $\{ f \in \cA^{\Z^d} : f_{|B}=\xi\}$.

\begin{defn}[F{\o}lner independence]\label{def:FI}
A translation-invariant measure $\mu$ on $\cA^{\Z^d}$ is \textbf{F{\o}lner independent} if for all $\ve>0$ there exists $N$ such that for any $n \ge N$ and finite $S \subset \Z^d \setminus \Lambda_n$,
\begin{equation}\label{eq:vwb}
\bar d (\mu|^\xi_{\Lambda_n}, \mu|_{{\Lambda_n}}) <\ve
\end{equation}
for all feasible $\xi \in \cA^S$, except for a set of $\mu$-measure at most $\ve$.
\end{defn}

We mention that the very weak Bernoulli condition is defined similarly, with the only difference being that $S$ is required to be a subset of a certain ``lexicographical past'' of $\Lambda_n$. We do not give the precise definition here, but content ourselves with the fact that, by its definition, it is weaker than F{\o}lner independence (for Markov random fields, the two are in fact equivalent~\cite{den1997k}). We rely on the following important theorem, which is due to Ornstein~\cite{ornstein1974ergodic} and Ornstein--Weiss~\cite{ornstein1974finitely} in the one-dimensional case. For the general case, we refer to~\cite{kammeyer90,katznelson1972commuting}.



\begin{thm}[\cite{kammeyer90,katznelson1972commuting}]
A translation-invariant ergodic measure on $\cA^{\Z^d}$ is Bernoulli if and only if it is very weak Bernoulli. In particular, if it is F{\o}lner independent, then it is Bernoulli.
\end{thm}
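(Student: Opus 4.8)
The plan is to reduce this to classical Ornstein isomorphism theory, treating the equivalence between Bernoulli and very weak Bernoulli (VWB) as the substantive content and the ``in particular'' clause as a formality. The clause is immediate from the way the two notions are set up: VWB only demands the $\bar d$-estimate between the conditional and unconditional laws on $\Lambda_n$ for conditionings $\xi$ supported on a fixed ``lexicographic past'' of $\Lambda_n$ (a half-space-type set disjoint from $\Lambda_n$), whereas F{\o}lner independence (\cref{def:FI}) demands the same estimate, with the same control on the exceptional set, for \emph{every} finite $S\subset\Z^d\setminus\Lambda_n$. Thus F{\o}lner independence trivially implies VWB, and it remains to show that a translation-invariant ergodic measure is Bernoulli iff it is VWB.

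For the easy direction (Bernoulli $\Rightarrow$ VWB), I would first observe that an i.i.d.\ process is VWB in the strongest possible way: the law on $\Lambda_n$ conditioned on anything outside $\Lambda_n$ equals the unconditional law, so the relevant $\bar d$-distance is $0$ for all conditionings. One then argues that VWB is an isomorphism invariant, by showing that VWB with respect to the time-zero generating partition implies VWB with respect to every finite measurable partition: approximate an arbitrary partition, up to a set of small measure, by one that is measurable with respect to the restriction to a large box, and absorb the approximation error into $\varepsilon$. Consequently any system measure-theoretically isomorphic to an i.i.d.\ process is VWB, so a Bernoulli measure is VWB.

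The hard direction (VWB $\Rightarrow$ Bernoulli) is the heart of Ornstein theory, and the natural way to organize it is the chain
\[ \text{VWB}\ \Longrightarrow\ \text{finitely determined}\ \Longrightarrow\ \text{Bernoulli}. \]
Here ``finitely determined'' means that any translation-invariant process whose entropy and whose finite-window marginals are sufficiently close to those of $\mu$ is $\bar d$-close to $\mu$; Ornstein's fundamental theorem then yields that two finitely determined processes of equal entropy are isomorphic, and since an i.i.d.\ process is finitely determined, $\mu$ is isomorphic to the i.i.d.\ process of the same entropy, i.e.\ Bernoulli. The implication VWB $\Rightarrow$ finitely determined is proved by a ``copying'' argument: use a Rokhlin-type tower to tile $\Z^d$ up to small density by translates of a large F{\o}lner box, re-randomize the process on the tower independently using the target distribution, and bound the accumulated $\bar d$-error tile by tile using the VWB estimate together with the near-match of the finite-window marginals, with ergodicity ensuring the tiling covers density close to $1$.

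The main obstacle is that all of this must be run for $\Z^d$- (indeed, amenable-group) actions rather than for $\Z$. In dimension one the Rokhlin lemma furnishes genuine towers and the matching of ``names'' along an interval is elementary, but in $\Z^d$ one must replace the Rokhlin lemma by the Ornstein--Weiss quasi-tiling machinery: F{\o}lner sequences admit $\varepsilon$-quasi-tilings by boundedly many shapes with small overlap and small uncovered density, and one needs uniform control of boundary-to-interior ratios so that the $\bar d$-error does not accumulate across tile boundaries. Managing this quasi-tiling bookkeeping — and the accompanying passage from conditioning on a finite ``past'' to conditioning on the full complement of a F{\o}lner set, which is handled via martingale/mean-ergodic convergence of the conditional laws — is the delicate part; the complete argument in the $\Z^d$ setting is carried out in~\cite{katznelson1972commuting,kammeyer90}, which we invoke.
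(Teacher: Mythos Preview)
The paper does not prove this theorem at all: it is stated as a black-box result imported from the literature, with references to Ornstein, Ornstein--Weiss, and the cited works~\cite{kammeyer90,katznelson1972commuting} for the $\Z^d$ case. There is nothing to compare against.

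Your sketch is a faithful outline of the standard Ornstein-theory route (VWB $\Rightarrow$ finitely determined $\Rightarrow$ Bernoulli, with Ornstein--Weiss quasi-tilings replacing the Rokhlin lemma in higher dimensions), and your observation that F{\o}lner independence trivially implies VWB because it quantifies over a larger class of conditioning sets is exactly how the paper justifies the ``in particular'' clause. So your proposal is consistent with what the cited references do, but goes well beyond what the paper itself provides; for the purposes of this paper the theorem is simply invoked, not proved.
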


Thus, \cref{prop:locally-mixing-is-Bernoulli} will follow once we establish the simple fact that local mixing implies F{\o}lner independence. In fact, as it turns out, the two are actually equivalent. As this latter statement requires more work to establish and as it is not our main concern, we postpone its proof to the end of the paper (\cref{sec:FI}).

\begin{prop}\label{prop:locally-mixing-and-FI}
A translation-invariant measure on $\cA^{\Z^d}$ is locally mixing  if and only if it is F{\o}lner independent.
\end{prop}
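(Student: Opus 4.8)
The plan is to prove the two implications separately; the implication ``locally mixing $\Rightarrow$ F{\o}lner independent'' is short, while the converse is where the real work lies.

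\emph{Locally mixing $\Rightarrow$ F{\o}lner independent.} Fix $\ve>0$. For each $n$, applying \cref{def:local-mixing} to the single pair $(\mu,\Z^d)$ yields a coupling of $f,f'\sim\mu$ with $f_{|\Z^d\setminus\Lambda_n}$ independent of $f'$ and $\P(f(v)\neq f'(v))\le\rho(n-|v|_\infty)$ for $v\in\Lambda_n$. For a finite $S\subset\Z^d\setminus\Lambda_n$ and a feasible $\xi\in\cA^S$, conditioning on $\{f_{|S}=\xi\}$ leaves $f'_{|\Lambda_n}$ distributed as $\mu|_{\Lambda_n}$ (independence) and makes $f_{|\Lambda_n}$ distributed as $\mu|^\xi_{\Lambda_n}$, so the conditioned pair is a coupling of $\mu|^\xi_{\Lambda_n}$ and $\mu|_{\Lambda_n}$; hence $\bar d(\mu|^\xi_{\Lambda_n},\mu|_{\Lambda_n})\le\frac1{|\Lambda_n|}\sum_{v\in\Lambda_n}\P(f(v)\neq f'(v)\mid f_{|S}=\xi)$. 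Taking expectation over $\xi$ and grouping the $v\in\Lambda_n$ by $|v|_\infty$ (each such ``sphere'' having $O(n^{d-1})$ sites), one gets $\E_\xi\,\bar d(\mu|^\xi_{\Lambda_n},\mu|_{\Lambda_n})\le\frac1{|\Lambda_n|}\sum_v\P(f(v)\neq f'(v))\le\frac1{|\Lambda_n|}\sum_v\rho(n-|v|_\infty)=O\big(\tfrac1n\sum_{j=0}^n\rho(j)\big)$, which tends to $0$ by Ces\`aro since $\rho(j)\to0$. Choosing $N$ so that this is $<\ve^2$ for all $n\ge N$ and applying Markov's inequality gives $\P_\xi\big(\bar d(\mu|^\xi_{\Lambda_n},\mu|_{\Lambda_n})\ge\ve\big)<\ve$, which is exactly \cref{def:FI}.

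\emph{F{\o}lner independent $\Rightarrow$ locally mixing.} Here I only outline the strategy. I would first record two ingredients. (i) Reading F{\o}lner independence scale by scale yields $\ve_m\downarrow0$ with $\bar d(\mu|^\xi_{\Lambda_m},\mu|_{\Lambda_m})<\ve_m$ for all feasible $\xi\in\cA^S$, $S\subset\Z^d\setminus\Lambda_m$, outside a set of $\mu$-measure $\le\ve_m$; a backward-martingale argument extends this to $S=\Z^d\setminus\Lambda_m$. (ii) F{\o}lner independence implies ergodicity and very weak Bernoulli, hence Bernoullicity and the condition $K$, i.e.\ tail triviality; since conditioning $\mu$ on the configuration on any fixed finite set again has trivial tail, the L\'evy $0$--$1$ law applied to the $\sigma$-fields $\sigma(f_{|\Lambda_r(v)^c})$, together with translation invariance, produces a single rate $\delta(r)\to0$ bounding (in total variation) the influence on the value at a site $v$ of the configuration outside $\Lambda_r(v)$, even after further conditioning on the configuration on any fixed finite set not containing $v$. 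For a given $n$, the coupling is then built by \emph{resampling the interior of $\Lambda_n$}: sample $f'\sim\mu$, independently sample a fresh exterior $\eta\sim\mu|_{\Z^d\setminus\Lambda_n}$, set $f=\eta$ off $\Lambda_n$, and draw $f_{|\Lambda_n}\sim\mu|^\eta_{\Lambda_n}$ coupled to $f'_{|\Lambda_n}$. Because the conditional increments of $f'$ depend only on $f'$, the copy $f'$ stays $\mu$-distributed and independent of $\eta$, so condition (1) of \cref{def:local-mixing} holds automatically and $f\sim\mu$ by construction; the whole problem reduces to coupling $\mu|^\eta_{\Lambda_n}$ with $\mu|_{\Lambda_n}$, for typical $\eta$, so that a site at distance $\ell$ from $\partial\Lambda_n$ disagrees with probability at most $\rho(\ell)$ for a fixed rate function $\rho$. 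I would do this by a multiscale coupling that reveals the two configurations on a nested family of boxes and shells, the scale used to treat a given site being chosen comparable to its distance from the boundary; at each stage one conditions on the already-fixed part and couples the next part, the key point being that the conditionings that occur are always typical, so F{\o}lner independence together with the triangle inequality for $\bar d$ (and ingredient (ii) for the centremost shells) controls each step irrespective of the entanglement already created between $f$ and $f'$.

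The main obstacle is that condition (2) of \cref{def:local-mixing} is a \emph{per-site} bound, whereas F{\o}lner independence and $\bar d$ only furnish control of the \emph{average} disagreement over a box. Upgrading averaged control to per-site control — uniformly over the position of $v$ and over $n$ — while at the same time keeping the per-site disagreement probability from accumulating over the many nested scales separating $v$ from $\partial\Lambda_n$, is the delicate part. I expect to handle it by (a) choosing the nesting so that a site at depth $\ell$ is treated at scale $\Theta(\ell)$, so that the relevant error is $\ve_{\Theta(\ell)}$ or $\delta(\Theta(\ell))$, which vanishes with $\ell$; (b) coupling each shell so that the probability of \emph{any} disagreement on it, rather than merely the average, is controlled, using the single-site estimates from (ii) together with a translation-averaging of the coupling across the shell; and (c) bounding the probability that a disagreement created on an outer shell cascades inward. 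Assembling these with the appropriate bookkeeping yields a rate function $\rho$ for which the resampling coupling satisfies both conditions of \cref{def:local-mixing}. These details are routine but lengthy, and, as this equivalence is not needed elsewhere in the paper, they would be deferred to a final section.
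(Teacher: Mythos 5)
Your first direction (local mixing $\Rightarrow$ F{\o}lner independence) is correct and essentially the paper's argument: condition the coupling on $f_{|S}$, use independence to keep $f'_{|\Lambda_n}\sim\mu|_{\Lambda_n}$, bound the expected $\bar d$-distance by an average of $\rho$ over distances to the boundary, and finish with Markov. The Ces\`aro averaging over spheres is a harmless variant of the paper's $m=n-\sqrt n$ split.

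The converse, however, has a genuine gap, and it sits exactly at the point you defer. Your ingredient (ii) does not follow from what you cite: tail triviality (the K property) gives, for each fixed event and each fixed conditioning, almost-sure convergence of conditional probabilities along $\sigma(f_{|\Lambda_r(v)^c})$, but it does not produce a single rate $\delta(r)$ that is uniform over all further conditionings on finite sets. Such a uniform single-site influence bound is a (weak) spatial mixing condition, strictly stronger than K --- indeed the paper recalls Hoffman's Markov random fields that are K but not Bernoulli, so no quantitative mixing of this kind can be extracted from tail triviality alone. Without (ii), your plan has no mechanism for converting the \emph{averaged} control furnished by $\bar d$ into the \emph{per-site} bound demanded by condition (2) of the definition of local mixing; this upgrade is the entire content of the implication, not a routine detail. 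A second, related problem is that your sequential shell coupling targets $\mu|_{\Lambda_n}$ itself, so after coupling an outer shell the inner conditional law of $f'$ is no longer an unconditional restriction of $\mu$, and F{\o}lner independence (which only compares a conditioned restriction to the unconditioned one) does not control the next step.

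For comparison, the paper resolves both issues with one device. It first reduces to coupling $\mu$ against an \emph{arbitrary} auxiliary measure $\nu$ on $\Lambda_n$ (two such couplings glued along $\nu$ give the required self-coupling, at the cost of a factor $2$ in the rate), and then takes $\nu$ to be a \emph{product over boxes} of restrictions of $\mu$, for a randomly translated multi-scale tiling of $\Lambda_n$ in which a box of side $\sim 4^i$ sits at distance $\gtrsim 4^i$ from $\Lambda_n^c$. The product structure makes the sequential coupling legitimate (each box is coupled, given everything outside it, against the unconditional $\mu|_{B}$, which is exactly what F{\o}lner independence provides), and averaging over the random offset of the tiling turns the per-box average disagreement bound $2\tilde\rho(k)$ into a per-site bound $\rho(\ell)$ depending only on the site's distance $\ell$ to $\partial\Lambda_n$. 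If you want to salvage your outline, replacing your nested-shell scheme and ingredient (ii) by this randomized tiling against a product target is the missing idea.
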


\begin{proof}[Proof of first half of \cref{prop:locally-mixing-and-FI} (local mixing implies F{\o}lner independence)]
Let $\mu$ be a translation-invariant locally mixing measure on $\cA^{\Z^d}$.
We show that for any $n \ge m \ge 1$ and finite $S \subset \Z^d \setminus \Lambda_n$, the set $\cG \subset \cA^S$ of feasible configurations $\xi$ such that
\begin{equation}\label{eq:vwb2}
\bar d (\mu|^\xi_{\Lambda_n}, \mu|_{{\Lambda_n}}) \le \ve := \sqrt{\rho(n-m) + \tfrac{|\Lambda_n \setminus \Lambda_m|}{|\Lambda_n|}}
\end{equation}
satisfies $\mu(\cG) \ge 1-\ve$.
Since $\ve$ can be made arbitrarily small by taking $n$ large enough and $m=n-\sqrt{n}$, say, this will establish that $\mu$ is F{\o}lner independent.
Let $f,f' \sim \mu$ be sampled from a coupling as in the definition of local mixing.
Let $X$ denote the average number of vertices $v \in \Lambda_n$ such that $f(v) \neq f'(v)$. Then
\[ \E X = \frac{1}{|\Lambda_n|} \sum_{v \in \Lambda_n} \P(f(v) \neq f'(v)) \le \rho(n-m) + \tfrac{|\Lambda_n \setminus \Lambda_m|}{|\Lambda_n|} = \ve^2 .\]
Define $Y := \E[X \mid f_{|S}]$ and note that $\E Y = \E X \le \ve^2$.
Thus, Markov's inequality yields that $\P(Y \ge \ve) \le \ve$. Finally, since $f_{|\Z^d \setminus \Lambda_n}$ and $f'$ are independent, the conditional distribution of $f'_{|\Lambda_n}$ given $f_{|S}$ is $\mu|_{\Lambda_n}$. It follows that the event $\{Y<\ve\}$ is contained in the event $\{f_{|S} \in \cG\}$, and hence, that $\mu(\cG) \ge 1-\ve$.
\end{proof}

\section{Proof outline}
\label{sec:pre}

Given the results about locally mixing measures discussed in \cref{sec:locally-mixing-measures}, our main result on the Bernoullicity of the 3-coloring measure will follow by showing that the appropriate family of 3-coloring measures is locally mixing.

We extend the definition of $\mu^{01}_D$ to allow for any two fixed boundary colors. Specifically, for distinct $i,j \in \{0,1,2\}$, let $\mu^{ij}_D$ be the uniform measure on the set of all 3-colorings of $D$ whose values on $\partial D$ are fixed to be $i$  and $j$ on even and odd vertices, respectively. A subset $D$ of $\Z^2$ is \textbf{simply connected} if it is connected and its complement $\Z^2 \setminus D$ is connected. A rate function $\rho$ is a \textbf{power-law rate function} if it satisfies $\rho(n) \le Cn^{-\alpha}$ for some $C,\alpha>0$ and all $n \ge 1$.

\begin{thm}\label{thm:colorings-are-locally-mixing}
Let $\cM$ be the family of all pairs $(\mu^{ij}_D,D)$ with $i,j \in \{0,1,2\}$ distinct and $D \subset \Z^2$ finite and simply connected. Then $\cM$ is locally mixing with a power-law rate.
\end{thm}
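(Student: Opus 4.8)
The plan is to pass to the height-function representation and then build the required coupling using the RSW theory of [chandgotia2018delocalization, duminil2019logarithmic]. First I would recall the bijection between proper 3-colorings of a simply connected $D$ (with the boundary colors alternating as $i$ on even and $j$ on odd vertices) and graph homomorphisms $h \colon D \to \Z$, normalized so that $h$ is determined modulo a global shift and the boundary values of $h$ are pinned in a specific way by the pair $(i,j)$. Under this bijection, $\mu^{ij}_D$ corresponds to the uniform measure on homomorphism height functions on $D$ with the appropriate boundary condition, which is the zero-boundary-type measure whose delocalization and RSW estimates have been established. The task of producing a coupling of $f \sim \mu^{ij}_D$ and $f' \sim \mu^{i'j'}_{D'}$ satisfying Definition 2.2 translates, essentially, into coupling the two height functions so that they agree inside $\Lambda_k$ with probability $1 - \rho(n-k)$, while the restriction of one of them outside $\Lambda_n$ is independent of the other. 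The natural mechanism is to find a random contour — a level line of the height function, or rather a nested annular circuit on which the two height functions can be matched — lying in the annulus $\Lambda_n \setminus \Lambda_k$, and then resample the interiors conditionally independently.

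The key steps, in order, are: (1) set up the height-function bijection and check that the boundary conditions coming from $(i,j)$ versus $(i',j')$ give height functions that differ by at most a bounded (or at worst $O(1)$) amount in a consistent way near $\Lambda_n$ — this matters because two height functions that are forced to differ by an odd constant can never be made to agree, so one must be careful about the parity/offset and couple the colorings rather than literally the heights, or absorb the offset into which level line one looks for. (2) Prove, using RSW crossing estimates for homomorphism height functions, that with probability at least $1 - Cn^{-\alpha}\cdot(\text{something})$ there is a circuit in the annulus $\Lambda_{m} \setminus \Lambda_{k}$ (for a suitable intermediate scale, iterated over dyadic scales between $k$ and $n$) on which the height function takes a prescribed value; the standard RSW + quasi-multiplicativity argument gives a polynomially small failure probability to find such a circuit at the outermost available scale, which yields the power-law rate. (3) Given such a circuit $\gamma$ for $f$ at value $a$ and an analogous circuit $\gamma'$ for $f'$ at a compatible value, take the innermost one, say $\gamma$, enclosing $\Lambda_k$; by the domain Markov property the coloring inside $\gamma$ under $\mu^{ij}_D$ is just the uniform 3-coloring measure on the interior with boundary data read off from $\gamma$, and similarly for $f'$. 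Using monotonicity/FKG-type comparison (or a direct coupling on the simply connected interior, where the two measures have the same boundary data once we condition the heights to agree on the circuit) couple the two interiors to be identical with high probability, and on the complementary event simply sample independently. (4) To get property (1) of Definition 2.2 — that $f_{|D\setminus\Lambda_n}$ is independent of $f'$ — arrange the circuit-finding for $f$ to use only the height function outside $\Lambda_n$... actually, more cleanly: reveal $f$ entirely, then find the outermost value-$a$ circuit $\gamma$ inside $\Lambda_n$ enclosing $\Lambda_k$ (measurable w.r.t.\ $f$); conditionally on $\gamma$ and on $f$ outside $\gamma$, resample $f'$ so that it equals $f$ outside and on $\gamma$ and is an independent uniform coloring inside, which is a legitimate coupling marginal for $f'$ precisely because the interior-of-$\gamma$ distribution depends only on the boundary values on $\gamma$. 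Then $f'$ is a deterministic-plus-independent function of $f_{|\text{outside }\gamma}$, hence of $(f_{|D\setminus\Lambda_n}, f_{|\Lambda_n})$ jointly; to secure independence of $f'$ from $f_{|D \setminus \Lambda_n}$ alone one additionally needs the circuit to be found using only $f_{|\Lambda_n}$ and the interior law to be insensitive to the exterior, which holds by the Markov property.

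The main obstacle I expect is step (2)–(3) done \emph{uniformly over the two different domains $D, D'$ and the two different boundary-color pairs}, together with the parity issue in step (1): one cannot simply couple the height functions because $\mu^{ij}_D$ and $\mu^{i'j'}_{D'}$ may correspond to height functions with different (and domain-dependent) offsets, so the right object to match on the annular circuit is the \emph{coloring} (equivalently, the height modulo $3$, or the height up to the appropriate global shift), and one must verify that an RSW-type circuit in the height landscape forces enough structure on the coloring that the two interiors genuinely have the same conditional law after conditioning. Handling the case where $\Lambda_n$ is deep inside a large $D$ but close to $\partial D'$ — so the RSW estimates must be applied in possibly very different geometries — is where the ``simply connected'' hypothesis and the domain-monotonicity of homomorphism measures will be essential, and extracting a single power-law rate valid across all these cases is the crux.
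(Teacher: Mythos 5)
There is a genuine gap at the heart of step (2). You assert that ``the standard RSW + quasi-multiplicativity argument gives a polynomially small failure probability to find such a circuit'' at a prescribed height value. For homomorphism height functions this is false: the height function delocalizes, and the heights of the nested level loops surrounding $\Lambda_k$ behave like a simple random walk with only $O(\log(n/k))$ steps available between scales $k$ and $n$. RSW does give, with high probability, a level loop at \emph{some} even height in each constant-aspect-ratio annulus, but the probability that this random walk returns to the one prescribed value (say $0$) within $O(\log n)$ steps is only $1-C(\log n)^{-1/2}$, so a single-pass circuit-hunting argument yields a rate like $(\log n)^{-\alpha}$, not a power law. (The paper's Corollary~\ref{cor:correlations} makes this quantitative: correlations of the height itself decay only like $(\log n)^{-3/2}$, so no argument that works purely at the level of the height function can produce a power-law rate.) The missing idea is the iterative alternation between the two representations: one first finds a monochromatic (color-$0$, i.e.\ height $0\bmod 6$) loop, which the random walk reaches with \emph{uniformly high} probability within a constant number of scales since a random walk hits $3\Z$ quickly; one then renormalizes the height to $0$ on that loop and attempts, with \emph{constant} success probability (Corollary~\ref{lem:level_loop2}), to find a loop of height exactly $0$ for both samples. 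Repeating this $\Theta(\log(n-k))$ times, each round consuming only constantly many scales, gives failure probability $(1-c)^{c\log(n-k)}=(n-k)^{-\alpha}$.

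A secondary problem is the coupling mechanism in steps (3)--(4). Resampling $f'$ to ``equal $f$ outside and on $\gamma$'' cannot produce the correct marginal $\mu^{i'j'}_{D'}$, since $f'$ lives on a different domain with possibly different boundary colors; and conditioning ``the heights to agree on the circuit'' is precisely what needs to be arranged, not assumed. The paper's resolution is to run a simultaneous monotone exploration of $|h|$ and $|h'|$ from the outside in, using FKG for $|h|$ (Proposition~\ref{prop:FKG_modh}) to maintain $|h|\ge|h'|$ pointwise: then the first level loop on which $h$ equals $0$ forces $h'=0$ there as well, after which the domain Markov property lets you glue the two interiors to a single sample. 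Your proposal correctly identifies the parity/offset issue and the need for FKG-type comparison, but without the stochastic domination of absolute values and the coloring--height alternation, neither the agreement of boundary data on the matching circuit nor the power-law rate is actually obtained.
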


We remark that a power-law rate is best possible; see \cref{cor:correlations}.
The proof of \cref{thm:colorings-are-locally-mixing} also shows that the larger family consisting of all pairs $(\mu^\xi_D,D)$ with $D$ finite and simply connected and with $\xi$ having bounded oscillations (in terms of the associated height function) is also locally mixing with a power-law rate; see \cref{rmk:sqrtlog}.

Together with \cref{prop:locally-mixing-has-limit}, the theorem immediately implies that $\mu^{ij}_D$ converges to a measure $\mu$ on 3-colorings of $\Z^2$ as $D$ increases to $\Z^2$ along simply connected finite subsets. Furthermore, since the limit does not depend on $i$ and $j$ (as $\cM$ contains pairs with every choice of $i$ and~$j$), it follows that $\mu$ is invariant to permutations of the colors.
It is also straightforward that $\mu$ is invariant to any automorphism $T$ of $\Z^2$, since $\mu^{01}_D \circ T^{-1}$ is either $\mu^{01}_{T(D)}$ or $\mu^{10}_{T(D)}$, both of which belong to $\cM$.

\begin{proof}[Proof of~\cref{thm:main}]
\cref{thm:colorings-are-locally-mixing} and \cref{prop:locally-mixing-has-limit} imply that $\mu$ is locally mixing, and \cref{prop:locally-mixing-is-Bernoulli} implies that $\mu$ is Bernoulli.
\end{proof}

The rest of the paper is mostly focused on proving \cref{thm:colorings-are-locally-mixing}. Below we introduce the height function representation for 3-colorings and give preliminary results for it in \cref{sec:preliminaries}. We then proceed to describe the proof strategy in \cref{sec:proof-strategy}. The full proof is given in \cref{sec:proof}.

\subsection{The height function}

It is well known that $3$-colorings of $\Z^2$ can be represented as homomorphisms from $\Z^2$ to $\Z$.
Recall that a homomorphism $\varphi$ from $D \subseteq \Z^2$ to $\Z$ is a map such that $|\varphi(x)-\varphi(y)|=1$ whenever $x,y \in D$ are adjacent. We also refer to such homomorphisms as height functions. We always work with height functions which are even on the even sublattice of $\Z^2$.

Suppose that $D$ is simply connected.
It can be checked that the mapping $h \mapsto h \mod 3 $ is a bijection from the space of height functions whose value is fixed on some vertex of $D$ to the space of 3-colorings of $D$ whose color is fixed on that same vertex. The inverse mapping can be defined as follows. If $\ms c$ is a $3$-coloring, then the gradient of $h$ along the directed edge $e=(u,v)$ is given by $\nabla h (e)  =+1$ if $\ms c(v) - \ms c(u) \in \{1,-2\}$ and $-1$ if  $\ms c(v) - \ms c(u) \in \{-1,2\}$. In particular, given a $3$-coloring of $\Z^2$, this mapping defines the gradient of a height function $h$ on $\Z^2$. In general, height functions will come with a predefined set of boundary conditions (precise definition will follow) which will always determine the function from its gradient.

Suppose that $D$ is finite and simply connected. Let $\phi_D^{01}$ denote the uniform measure on height functions on $D$ whose values on the boundary $\partial D$ are fixed to be $0$ and 1 on even and odd vertices, respectively. Then $\phi_D^{01}$ is pushed forward to $\mu^{01}_D$ by the modulo 3 map.

\subsection{Preliminary results}
\label{sec:preliminaries}

We begin with some notation.
For any $m$ and $n$, denote $\Lambda_{m,n} = [-m,m] \times [-n,n] \cap \Z^2$, and recall that $\Lambda_n=\Lambda_{n,n}$. For $n \ge m$, let $A_{m,n}$ denote the annulus $\Lambda_n \setminus \Lambda_m$.
A \textbf{path} in $\Z^2$ is a sequence of vertices $(v_1,v_2,\ldots, v_n)$ such that $v_i$ is adjacent to $v_{i+1}$ for $i=1,\ldots, n-1$. A path is a \textbf{loop} if $v_i \neq v_j$ for all $1\le i \neq j \le n-1$ and $v_1 = v_n$. We also require a notion of diagonal connectivity in $\Z^2$: say that $u$ is a $\times$-neighbor of $v$ if $u$ and $v$ are at Euclidean distance $\sqrt{2}$ (i.e., they are diagonal neighbors). We define a $\times$-path ($\times$-loop) in a similar way, replacing the $\Z^2$ adjacency by $\times$-adjacency. Note that a $\times$-path consists of vertices of a single parity, so that we may talk about even and odd $\times$-paths. A \textbf{domain} is a simply connected finite subset $D$ of $\Z^2$ whose boundary $\partial D$ is entirely contained in the \emph{even} lattice. When $D$ is a domain, we write $\mu^0_D$ and $\phi^0_D$ for $\mu^{01}_D$ and $\phi^{01}_D$, respectively.

A key tool we need is the recently established Russo--Seymour--Welsh estimate for height functions~\cite{duminil2019logarithmic} (a classical estimate in planar percolation-type models), which was used in~\cite{duminil2019logarithmic} to show logarithmic variance for the height function.
Let $\cH_{h\ge k}(\Lambda_{\rho n,n})$ (resp.\ $\cH_{h=k}^\times(\Lambda_{\rho n,n})$) be the event that there is a path (resp.\ $\times$-path) with height at least $k$ (resp.\ equal to $k$) joining the left and right boundaries of $\Lambda_{\rho n,n}$ and lying completely inside $\Lambda_{\rho n,n}$.

\begin{thm}[Russo--Seymour--Welsh estimate \cite{duminil2019logarithmic}] 
For every $\varepsilon,R,\rho,k>0$, there exists $c=c(\varepsilon,R,\rho,k)>0$ such that for any $n \ge \frac{10k}{\ve \wedge \rho}$ and any domain $\Lambda_{\rho n,n} \subset D\subset \Lambda_{Rn}$ such that the distance between $\Lambda_{\rho n,n}$ and $\partial D$ is at least $\varepsilon n$, 
 \begin{align}
c~\le~ &\phi_D^0 [\cH_{h\ge k}(\Lambda_{\rho n,n})]~\le~1- c,\label{eq:ha1}\\
c~\le~&\phi_D^0 [\cH_{h=k}^\times(\Lambda_{\rho n,n})]~\le~1-c.\label{eq:ha2}
\end{align}
\end{thm}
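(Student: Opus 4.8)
The statement to prove is the Russo--Seymour--Welsh estimate quoted from \cite{duminil2019logarithmic}. Since this is an external cited result, a ``proof proposal'' here means sketching how such an RSW theory for Lipschitz / homomorphism height functions is established, following the strategy of \cite{duminil2019logarithmic} (which in turn adapts the percolation RSW machinery and the parafermionic/loop-model techniques).

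\medskip

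\noindent\textbf{Proof proposal.} The plan is to transfer the problem to the associated loop model and then run a Russo--Seymour--Welsh crossing argument. First I would recall that level lines of the height function $h$ (interfaces separating $\{h \ge k\}$ from $\{h \le k-1\}$, and the $\times$-level sets $\{h = k\}$) form a system of interfaces on a suitable planar graph, and that the uniform measure $\phi^0_D$ on homomorphisms induces a Gibbs-type measure on these loops which enjoys a positive-association (FKG) property for the events $\cH_{h \ge k}$ with respect to a natural partial order on height functions (the max and min of two homomorphisms is again a homomorphism, so the configuration space is a distributive lattice and the uniform measure is monotone). This FKG property is the workhorse: it lets one glue crossings of overlapping rectangles, and it lets one compare crossing probabilities in $D$ with those in larger or smaller domains via monotonicity of boundary conditions. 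Second, I would establish a crossing estimate in a square: using the symmetry of $\Z^2$ under rotation by $\pi/2$ and the self-duality intrinsic to the height-function representation (a primal horizontal crossing at height $\ge k$ is dual-blocked exactly by a $\times$-crossing at height $\le k-1$ in the orthogonal direction, up to a parity/color shift), one shows that the horizontal crossing probability of a square of the relevant scale is bounded away from $0$ and $1$ uniformly. The key technical input making this work quantitatively — rather than merely along a subsequence — is the \emph{second-moment / energy estimate} on the number of long interfaces, obtained via the parafermionic observable or via a direct counting argument specific to the $3$-coloring (six-vertex at the free-fermion/combinatorial point) model; this is what replaces the conformal-invariance input in the classical RSW proofs.

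\medskip

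From the square-crossing estimate, the standard RSW bootstrap produces crossings of rectangles of arbitrary fixed aspect ratio: one covers a long rectangle by overlapping translates of squares, crosses each square horizontally with uniformly positive probability, crosses appropriately placed squares vertically to connect the horizontal crossings, and combines everything with FKG. Iterating, one upgrades a crossing of $\Lambda_{\rho n, n}$ to crossings at height $\ge k$ for each fixed $k$ by paying a constant factor per unit of height (each additional height level costs a bounded-probability event by another application of RSW to a thinner ``layer''), which is exactly why the constant $c$ is allowed to depend on $k$. The uniformity over domains $\Lambda_{\rho n, n} \subset D \subset \Lambda_{Rn}$ with $\dist(\Lambda_{\rho n,n}, \partial D) \ge \ve n$ comes from sandwiching: by monotonicity in boundary conditions, $\phi^0_D$ restricted to $\Lambda_{\rho n, n}$ is squeezed between $\phi^0$ with extremal boundary data on a box of size $\asymp \ve n$ around $\Lambda_{\rho n,n}$ and on $\Lambda_{Rn}$, and the crossing probabilities for those fixed-shape domains are controlled by the previous steps; the condition $n \ge 10k/(\ve \wedge \rho)$ just ensures there is enough room for the $k$ nested layers inside the $\ve n$-collar. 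The complementary upper bounds $\phi^0_D[\cdot] \le 1-c$ follow from the lower bounds applied to the dual crossing event in the orthogonal direction, again using self-duality of the loop representation.

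\medskip

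The main obstacle is the square-crossing estimate itself — i.e., proving RSW ``from scratch'' without conformal invariance. Classical Bernoulli-percolation RSW arguments (Russo, Seymour--Welsh) rely on independence; here the height function has long-range dependence, so one needs either the parafermionic observable of \cite{duminil2019logarithmic} to get the crucial a priori bound on interface lengths, or a clever deployment of the FKG/monotonicity structure together with a ``pushing'' argument that reflects a crossing to produce a second one. Getting this input \emph{quantitative and domain-uniform}, rather than just an existence statement along a subsequence of scales, is the delicate part, and it is precisely the content of the cited work; everything downstream (rectangle crossings, multi-height layering, boundary-condition sandwiching) is a robust combinatorial bootstrap that I would expect to go through routinely once the square estimate is in hand.
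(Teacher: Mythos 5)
This theorem is not proved in the paper under review; it is imported verbatim from \cite{duminil2019logarithmic}, so the only thing to check is whether your sketch is a faithful account of how that reference actually establishes the RSW bound. Your overall scaffolding is right: FKG for gluing and sandwiching between boundary conditions, duality of crossings, square-to-rectangle bootstrap, and paying a constant factor per height level to pass from $k=1$ to general $k$ are all genuine ingredients, and the paper itself names duality and the FKG/monotonicity properties (for $h$ and, crucially, for $|h|$) as the two key tools.

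However, you misidentify the central mechanism. The reference does \emph{not} use the parafermionic observable to obtain the square-crossing estimate; the crossing input is produced by a symmetry-plus-FKG argument built around the monotonicity for $|h|$ (\cref{prop:FKG_modh}), not by an a priori energy bound on interface lengths. Your second alternative (``a clever deployment of the FKG/monotonicity structure together with a `pushing' argument'') is the one that is actually used, so you should not present the parafermionic route as ``the'' approach of the cited paper. Relatedly, the description of $3$-colorings/square ice as the ``free-fermion/combinatorial point'' of the six-vertex model is incorrect: the free-fermion point is $\Delta=0$, whereas the ice point is $\Delta=1/2$. This matters because it is precisely the absence of a free-fermion/determinantal structure that forces the use of the soft $|h|$-FKG machinery rather than exact computation. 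With those two corrections—replace the parafermionic observable with the $|h|$-FKG/symmetry argument as the quantitative engine, and drop the free-fermion claim—your sketch would match the cited proof.
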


There are two key tools used in proving the above theorem, versions of which were also classically used to study planar percolation and random-cluster models \cite{duminil2017lectures}. The first one is duality of paths, which roughly states that a left-to-right crossing of height $h \ge m$ is blocked by a top-to-bottom crossing of height $h<m$. However, one needs to be careful with the type of connectivity used, and we will carefully point this out at the relevant place in the proof (rather than stating a general duality lemma, e.g., \cite[Lemma 2.4]{duminil2019logarithmic}, which we do not need).

The second key tool is a monotonicity property for the height function, classically known as the Fortuin--Kasteleyn--Ginibre \textbf{(FKG)} inequality and lattice condition. To properly state this, we introduce a general notion of boundary condition.
Given a domain $D$, a \textbf{boundary condition} is a pair $(B,\kappa)$ with $B \subset D$ and $\kappa$ a function that assigns a subset $\kappa_v \subset \Z$ to each $v \in B$.
Let ${\rm Hom}( D,B,\kappa)$ denote the set of homomorphisms $h$ on $D$ such that $h_v\in \kappa_v$ for every $v\in B$. We say that the boundary condition $(B,\kappa)$ is \textbf{admissible} if ${\rm Hom}( D,B,\kappa)$ is nonempty and finite. For an admissible boundary condition $(B,\kappa)$, we let $\phi^{B, \kappa}_D$ denote the uniform measure on ${\rm Hom}(D,B,\kappa)$. When $B=\partial D$, we omit $B$ from the notation.
A function $F\colon \Z^D \mapsto \R$ is \textbf{increasing} if $F(h) \ge F(h')$ for any $h, h' \in   \Z^D$  satisfying $h_v \ge h'_v$ for all $v \in  D$.  

 \begin{prop}[monotonicity for $h$~\cite{duminil2019logarithmic}]\label{prop:FKG_h}
 Consider a domain $D$ and two admissible boundary conditions $(B,\kappa)$ and $(B,\kappa')$ satisfying that for every $v\in B$, $\kappa_v=[a_v,b_v]$ and $\kappa'_v=[a'_v,b'_v]$ with $a_v\le a'_v$ and $b_v\le b'_v$ (the previous integers may be equal to $\pm\infty$). Then
 \begin{itemize}
 \item For every increasing function $F$, 
 $\phi^{B, \kappa'}_D[F(h)]\ge\phi^{B, \kappa}_D[F(h)]$;
 \item For any two increasing functions $F,G$, $\phi^ {B, \kappa}_D[F(h)G(h)] \ge   \phi^ {B, \kappa}_D[F(h)]  \phi^ {B, \kappa}_D [G(h)]$.
 \end{itemize}
 \end{prop}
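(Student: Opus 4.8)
The plan is to realise $\mathrm{Hom}(D,B,\kappa)$ as a finite distributive lattice under the coordinatewise order on $\Z^D$ and then invoke the classical Fortuin--Kasteleyn--Ginibre and Holley inequalities; the only genuinely model-specific input is a combinatorial lattice property of homomorphisms. Concretely, I would first prove: if $h,h'$ are homomorphisms on $D$, then $h\vee h' := \max(h,h')$ and $h\wedge h' := \min(h,h')$ (pointwise) are again homomorphisms. Fix an edge $(u,v)$ and set $g := h-h'$. Since both $h$ and $h'$ follow the standard parity convention (even on the even sublattice), $g$ is even at every vertex, and since $h_v-h_u,\,h'_v-h'_u\in\{\pm1\}$ we have $g_v-g_u\in\{-2,0,2\}$. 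In particular one cannot have $g_u\ge 2$ and $g_v\le-2$ at once, i.e.\ the two functions cannot strictly swap which one is larger across a single edge. A short case check then gives $|(h\vee h')_u-(h\vee h')_v|=1$: if $h$ is weakly larger at both endpoints the edge constraint is inherited from $h$ (similarly for $h'$); otherwise there is a tie at one endpoint, where $h\vee h'$ agrees with one of $h,h'$ along the edge. The same argument handles $h\wedge h'$.

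Second, since each $\kappa_v$ is an interval $[a_v,b_v]$, the constraints $h_v\in\kappa_v$ are preserved under $\max$ and $\min$, so $\mathrm{Hom}(D,B,\kappa)$ is closed under $\vee$ and $\wedge$, and hence is a finite distributive lattice (admissibility gives nonemptiness and finiteness). The second bullet is then immediate: the uniform measure $\phi^{B,\kappa}_D$ is constant on this lattice, so the FKG lattice condition $\mu(x\vee y)\mu(x\wedge y)\ge\mu(x)\mu(y)$ holds, indeed with equality, for all $x,y$ in the lattice; the FKG inequality then yields positive correlation of increasing functions, noting that any increasing $F\colon\Z^D\to\R$ restricts to an increasing function on the lattice.

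For the first bullet, set $L_1:=\mathrm{Hom}(D,B,\kappa)$ and $L_2:=\mathrm{Hom}(D,B,\kappa')$, and let $L$ be the sublattice of $\Z^D$ generated by $L_1\cup L_2$; since $L_1,L_2$ are finite, $L$ lies in a finite box, so it is a finite distributive lattice containing both supports. I would verify Holley's criterion for the pair $(\phi^{B,\kappa}_D,\phi^{B,\kappa'}_D)$ on $L$: for $h\in L_1$ and $h'\in L_2$ one needs $h\vee h'\in L_2$ and $h\wedge h'\in L_1$. Both are homomorphisms by the lattice property above, and for $v\in B$, using $a_v\le a'_v$ and $b_v\le b'_v$, one has $\min(h_v,h'_v)\ge a_v$ and $\min(h_v,h'_v)\le h_v\le b_v$, so $h\wedge h'\in L_1$, while $\max(h_v,h'_v)\ge h'_v\ge a'_v$ and $\max(h_v,h'_v)\le b'_v$, so $h\vee h'\in L_2$. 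Since all the relevant configurations have the constant densities of the respective uniform measures, Holley's condition $\phi^{B,\kappa'}_D(h\vee h')\,\phi^{B,\kappa}_D(h\wedge h')\ge\phi^{B,\kappa}_D(h)\,\phi^{B,\kappa'}_D(h')$ holds (with equality). Holley's inequality then gives that $\phi^{B,\kappa'}_D$ stochastically dominates $\phi^{B,\kappa}_D$, which is the first bullet.

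The crux is the combinatorial lattice property --- that $\max$ and $\min$ of homomorphisms are homomorphisms --- which rests entirely on the parity observation that the difference of two height functions is even-valued and changes by at most $2$ along each edge; everything afterward is a routine invocation of the FKG and Holley inequalities for uniform measures on finite distributive lattices. The one point requiring a little care is that the interval structure of the $\kappa_v$ (rather than arbitrary subsets of $\Z$) is exactly what makes the constraint sets closed under $\vee$ and $\wedge$, which is why the hypothesis of the proposition is stated with intervals.
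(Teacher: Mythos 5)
Your argument is correct. Note that the paper itself gives no proof of this proposition --- it is quoted from \cite{duminil2019logarithmic} --- and your route (the parity observation that the difference of two height functions is even and changes by at most $2$ across an edge, hence pointwise $\max$ and $\min$ of homomorphisms are homomorphisms; closure of interval constraints under $\vee,\wedge$; then FKG for the uniform measure on the resulting finite distributive lattice and Holley for comparison of boundary conditions) is precisely the standard proof given in that reference. The only point worth flagging is that the textbook form of Holley's inequality assumes strictly positive densities on a common lattice, whereas your two uniform measures are supported on the sublattices $L_1,L_2$ of the ambient lattice $L$; this is handled by the usual $\varepsilon$-perturbation (or by a version of Holley without the positivity hypothesis), and does not affect the substance of the argument.
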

 The first property is called the {\em comparison between boundary conditions} and the second the {\em FKG inequality}. We also crucially use monotonicity properties of $|h|$ in addition to those of $h$. We say that the boundary condition $(B,\kappa)$ is \textbf{$|h|$-adapted} if there exists a partition $B_{\rm pos}(\kappa) \sqcup B_{\rm sym}(\kappa)$ of $B$ such that
\begin{itemize}
\item for any $v \in B_{\rm pos}(\kappa)$, $\kappa_{v}\subset\Z_+:=\{0,1,2,\dots\}$;
\item for any $w \in B_{\rm sym}(\kappa)$, $\kappa_w =-\kappa_w$. 
\end{itemize}
We adopt the convention that $v \in B_{\rm sym}(\kappa)$ whenever $\kappa_v = \{0\}$, so there is no ambiguity in the above partition.

   \begin{prop}[monotonicity for $|h|$~\cite{duminil2019logarithmic}]\label{prop:FKG_modh}
    Consider a domain $D$ and two admissible $|h|$-adapted boundary conditions $(B,\kappa)$ and $(B,\kappa')$ satisfying $B_{\rm pos}(\kappa) \subseteq B_{\rm pos}(\kappa')$ and for every $v\in B$, $\kappa_v\cap\Z_+=[a_v,b_v]$ and $\kappa'_v\cap\Z_+=[a'_v,b'_v]$ with $a_v\le a'_v$ and $b_v\le b'_v$. Then
\begin{itemize}
 \item For every increasing function $F$, 
 $\phi^{B, \kappa'}_D[F(|h|)]\ge\phi^{B, \kappa}_D[F(|h|)]$;
 \item For any two increasing functions $F,G$, $\phi^ {B, \kappa}_D[F(|h|)G(|h|)] \ge   \phi^ {B, \kappa}_D[F(|h|)]  \phi^ {B, \kappa}_D[G(|h|)]$.
 \end{itemize}
 \end{prop}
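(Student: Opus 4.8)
The plan is to push the measure forward under $h\mapsto |h|$ and to verify the FKG lattice condition (for the FKG inequality) and Holley's condition (for the comparison between boundary conditions) for the resulting measure on nonnegative homomorphisms.

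First I would record the structural facts. Since our height functions are even on the even sublattice, any two of them have the same parity at every vertex, adjacent vertices carry heights of opposite parity, two adjacent vertices are never both at height $0$, and $|h(u)-h(v)|=1$ then forces $\bigl\lvert |h(u)|-|h(v)| \bigr\rvert = 1$; hence $|h|$ is a homomorphism with values in $\Z_{\ge 0}$, and $|h|>0$ on the odd sublattice. Exactly as for $h$ (cf.\ \cref{prop:FKG_h}), the set $\cN_D$ of nonnegative homomorphisms on $D$ is a finite distributive lattice under pointwise $\min$ and $\max$ --- the parity of heights being essential here, since it forbids $\min$ or $\max$ of two elements of $\cN_D$ from agreeing on an edge. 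The $|h|$-adapted condition translates into: $h$ is admissible iff $g:=|h|\in\cN_D$ with $g(v)\in\kappa_v\cap\Z_+=[a_v,b_v]$ for all $v\in B$, and for such a $g$ the number of admissible $h$ with $|h|=g$ equals $2^{k_\kappa(g)}$, where $k_\kappa(g)$ is the number of connected components of $\{g>0\}$ that avoid $B_{\rm pos}(\kappa)$. (Indeed the sign of $h$ is constant on each component of $\{h\neq 0\}$, forced positive on components meeting $B_{\rm pos}(\kappa)$ by definition of $B_{\rm pos}$, and free on the others by the symmetry $\kappa_w=-\kappa_w$ on $B_{\rm sym}(\kappa)$; simple connectivity of $D$ is used to see that a sign assignment on components indeed produces a homomorphism.) So, writing $\nu^\kappa_D$ for the law of $|h|$ when $h\sim\phi^{B,\kappa}_D$, we have $\nu^\kappa_D(g)\propto 2^{k_\kappa(g)}\,\mathbf{1}\{g\in\cN_D,\ g(v)\in[a_v,b_v]\ \forall v\in B\}$ on $\cN_D$, and the two bullets say precisely that $\nu^{\kappa'}_D$ stochastically dominates $\nu^\kappa_D$, and that $\nu^\kappa_D$ is positively associated.

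For the FKG inequality (second bullet), this amounts to the FKG lattice condition $\nu^\kappa_D(g_1\vee g_2)\,\nu^\kappa_D(g_1\wedge g_2)\ge \nu^\kappa_D(g_1)\,\nu^\kappa_D(g_2)$. The interval constraints $g(v)\in[a_v,b_v]$ are preserved under $\wedge$ and $\vee$, so this reduces to the supermodularity of $k_\kappa$. I would prove it via the identity $k_\kappa(g)=\#\mathrm{comp}(\hat G-Z_g)-1$, where $Z_g:=\{v:g(v)=0\}$ is a subset of the even sublattice and $\hat G$ is $D$ with one extra vertex $p$ adjacent to all of $B_{\rm pos}(\kappa)$; since $Z_{g_1\vee g_2}=Z_{g_1}\cap Z_{g_2}$ and $Z_{g_1\wedge g_2}=Z_{g_1}\cup Z_{g_2}$, it suffices that $Z\mapsto\#\mathrm{comp}(\hat G-Z)$ is supermodular over subsets $Z$ of the even sublattice. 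This holds because, for an even vertex $v\notin Z$, all neighbours of $v$ in $\hat G$ (its $\Z^2$-neighbours, and possibly $p$) lie outside $Z$, so deleting $v$ from $\hat G-Z$ never removes a whole component and can only split components; hence the marginal increment $\#\mathrm{comp}(\hat G-Z-v)-\#\mathrm{comp}(\hat G-Z)$ is nondecreasing in $Z$, which is exactly supermodularity. Given the lattice condition, and since $\nu^\kappa_D$ is positive on a distributive lattice of feasible configurations, the classical FKG (Harris) theorem gives positive association.

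For the comparison between boundary conditions (first bullet) I would verify Holley's condition between $\nu^\kappa_D$ and $\nu^{\kappa'}_D$: in terms of the unnormalised weights $N_\kappa(g)=2^{k_\kappa(g)}\mathbf{1}\{\cdots\}$ one needs $N_{\kappa'}(g_1\vee g_2)\,N_\kappa(g_1\wedge g_2)\ge N_\kappa(g_1)\,N_{\kappa'}(g_2)$ for all $g_1,g_2$. The interval ordering $a_v\le a'_v$, $b_v\le b'_v$ makes the indicators line up ($g_1$ feasible for $\kappa$ and $g_2$ feasible for $\kappa'$ give $g_1\vee g_2$ feasible for $\kappa'$ and $g_1\wedge g_2$ feasible for $\kappa$), and the remaining exponent inequality $k_{\kappa'}(g_1\vee g_2)+k_\kappa(g_1\wedge g_2)\ge k_\kappa(g_1)+k_{\kappa'}(g_2)$ follows from the same component-count identity --- now comparing the two augmented graphs $\hat G,\hat G'$ (attached to $B_{\rm pos}(\kappa)$ and $B_{\rm pos}(\kappa')$ respectively) and using the relation between $B_{\rm pos}(\kappa)$ and $B_{\rm pos}(\kappa')$, which governs which components are "free". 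Holley's theorem then yields the stochastic domination. The main obstacle is this combinatorial core --- identifying $k_\kappa$ with a shifted component count and proving its supermodularity and the mixed two-boundary-condition variant --- whereas the lattice structure of $\cN_D$, the description of $\nu^\kappa_D$ via the weight $2^{k_\kappa}$, and the passage from the lattice/Holley conditions to the stated inequalities via the classical FKG and Holley theorems are routine.
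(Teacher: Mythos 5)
The paper does not prove this proposition; it is stated with a citation to Duminil-Copin--Harel--Laslier--Raoufi--Ray (\emph{duminil2019logarithmic}), so there is no ``paper proof'' to compare against directly. That said, your strategy---pushing $\phi^{B,\kappa}_D$ forward under $h\mapsto|h|$, identifying the weight $2^{k_\kappa(g)}$ in terms of a component count, and reducing to the FKG lattice and Holley conditions---is the standard route to this result, and it is very close in spirit to the argument in the cited reference. Your treatment of the FKG inequality (second bullet) is correct: the parity constraint makes $\cN_D$ a distributive lattice, the identity $k_\kappa(g)=\#\mathrm{comp}(\hat G-Z_g)-1$ holds, and supermodularity of $Z\mapsto\#\mathrm{comp}(\hat G-Z)$ over subsets of the even sublattice follows as you argue (deleting an even vertex never removes a whole component because all of its neighbours in $\hat G$ are odd or equal to $p$, hence never deleted), via the marginal-increment criterion.

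The gap is in the first bullet. You assert that the mixed inequality
\[
k_{\kappa'}(g_1\vee g_2)+k_\kappa(g_1\wedge g_2)\ \ge\ k_\kappa(g_1)+k_{\kappa'}(g_2)
\]
``follows from the same component-count identity,'' but this is not automatic. Writing $f(Z):=\#\mathrm{comp}(\hat G-Z)$ and $f'(Z):=\#\mathrm{comp}(\hat G'-Z)$, the inequality becomes $f'(Z_1\cap Z_2)+f(Z_1\cup Z_2)\ge f(Z_1)+f'(Z_2)$; splitting off supermodularity of $f$ reduces this to showing that $f-f'$ is nondecreasing in $Z$, and that is \emph{false} in general. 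Indeed, $f(Z)-f'(Z)$ counts the components of $\hat G-Z$ absorbed into $p$'s component by the extra edges from $p$ to $U:=B_{\rm pos}(\kappa')\setminus B_{\rm pos}(\kappa)$, and this can drop when $Z$ swallows a vertex of $U$ (e.g.\ if $u\in U$ has $g_2(u)=0<g_1(u)$, which the hypotheses permit when $a'_u=0$), even though it tends to rise when components split. One must either exploit the feasibility constraints more carefully or organize the combinatorics differently; as written, this step is not justified and is precisely the delicate core of the comparison-between-boundary-conditions statement. (A minor side remark: your invocation of simple connectivity of $D$ to justify the sign-assignment count appears unnecessary---the homomorphism property is a purely local, per-edge check once $g$ is fixed.)
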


It is these monotonicity properties that make working with the height function representation beneficial for us. 

\subsection{Proof strategy}\label{sec:proof-strategy}

Given a height function $h$, a \textbf{level loop} is a $\times$-loop on which the height is constant.
We claim that the Russo--Seymour--Welsh estimate guarantees that under $\phi^0_D$, for a domain $D$ containing $\Lambda_n$ but not containing a much larger box, most points in $\Lambda_n$ are surrounded by a level loop of height 0 contained entirely in $\Lambda_n$. Indeed, if we look at exponential scales starting from the  boundary inwards, there is a good chance of finding many successive level loops with height increment $\pm 2$. Using the symmetry of the height function, we can argue that the actual increment is $+2$ or $-2$ with equal chance, and hence the heights along these nested loops constitute a simple random walk. Since simple random walk is recurrent, there is a high chance of hitting a loop of height zero, thereby yielding the claim.

Now suppose we have two measures on height functions on $D$, one with boundary condition $\xi$ and one with 0 boundary condition (here we think of $\xi$ as specifying only the absolute values on the boundary). By FKG for $|h|$, we can couple samples $h^\xi$ and $h^0$ from these measures so that inside the outermost level loops of $0$ for $h^\xi$, the two height functions agree. Indeed, FKG for $|h|$ implies that $|h^\xi|$ stochastically dominates $|h^0|$, which means that we can couple $h^\xi$ and $h^0$ so that $|h^\xi|$ dominates $|h^0|$ pointwise. However, since 0 is the lowest absolute value, the outermost level loop with height 0 for $h^\xi$ must also be a height 0 level loop for $h^0$, and hence, by the domain Markov property, we can couple them inside to be the same. There are some technical issues with applying this argument as is, but these can be handled using standard exploration procedures.

However, combining the above two arguments is not enough, since when the boundary condition $\xi$ is very high in absolute value, there is a good chance that there is no outermost level loop of height 0, and indeed the height function delocalizes as $\Lambda_n$ becomes large~\cite{chandgotia2018delocalization,duminil2019logarithmic}. Thus, we need to reduce to the case where the boundary condition $\xi$ can be taken to be not too large. For this, we show that with high probability in the Gibbs measure of the coloring, we can find a color 0 loop $\cL$ in the annulus $A_{n,\Delta n}$ for some large $\Delta$. Now, by the domain Markov property, the law of the coloring inside the domain enclosed by $\cL$ can be viewed as a uniform homomorphism height function with 0 boundary conditions on $\cL$. Therefore, with high probability, we have somewhat reduced to the desired case above (the boundary condition is on the loop rather than on the boundary of the box).
Note that we need to switch back to the height function representation of the coloring to establish the coupling since we are crucially using FKG for absolute value of the height function to that end.

By employing a two-step iteration of the above argument, one may establish the required coupling needed to verify the local mixing condition. However, this would not yield a power-law rate function. The reason for this stems from the fact that, with probability that is polynomial in $k$, a simple random walk does not return 0 within $k$ steps, whereas the number of steps $k$ is with high probability only logarithmic in $n$, so that the former argument would yield a rate function $\rho$ decaying like $\rho(n) \le C(\log n)^{-\alpha}$. To obtain a power-law rate, we apply the above argument iteratively, switching back and forth between the coloring and height function representations. Such a back-and-forth procedure seems essential for this (see \cref{cor:correlations}).
Roughly speaking, we first find the outermost monochromatic loop in the coloring. We then we switch to the height function and start looking for a level loop of height 0, allowing ourselves to stop if we do not find such a loop after searching a constant number of scales. If we find such a loop quickly, then we are done. Otherwise, we give up on finding the height 0 level loop in this iteration, and proceed to the next iteration by going back to the coloring in order to find the next outermost monochromatic loop. Repeating this procedure yields a coupling which establishes local mixing with a power-law rate.

\section{Proof of Theorem~\ref{thm:colorings-are-locally-mixing}}
\label{sec:proof}

We begin with some results about the level lines of the height function in \cref{sec:level-lines}, and then proceed to give the proof of \cref{thm:colorings-are-locally-mixing} in \cref{sec:proof-main-thm}.

\smallskip
\noindent{\bf Policy on constants:} In the rest of the paper, we employ the following policy on constants. We write $C,c,C',c'$ for positive absolute constants, whose values may change from line to line. Specifically, the values of $C,C'$ may increase and the values of $c,c'$ may decrease from line to line.

\subsection{Level lines of the height function}
\label{sec:level-lines}

In this section, we prove some results about the level loops of the height function. Recall that a level loop of a height function is a $\times$-loop on which the height is constant. We will mostly be focused on level loops in the even lattice, or equivalently, level loops on which the height is even.

Let $D$ be a domain containing the origin and let $h$ be a height function on $D$ sampled from $\phi^0_D$. We inductively define a sequence of nested level loops $\cL_0,\dots,\cL_M$ surrounding the origin as follows.
First, we set $\cL_0$ to be the boundary loop of the domain $D$.
Next, we let $\cL_1$ be the outermost level loop with $|h| = 2$ surrounding the origin.
Now suppose that we have already defined the level loop $\cL_m$ and that the height on it is $H_m\in 2\Z$. Let $\cL_{m+1}$ be the outermost level loop surrounding the origin in the domain enclosed by $\cL_m$ and having height $H_{m+1}$ satisfying $|H_{m+1}-H_m| = 2$. If no such loop exists, then we set $M=m$ and stop the inductive procedure.
We say that the level loops $\cL_0,\dots,\cL_M$ are \textbf{essential}, and note that between $\cL_m$ and $\cL_{m+1}$ there may be many non-essential level loops of height $H_m$ surrounding the origin.

Observe that, conditioned on the exploration up to $\cL_m$, the height difference $H_{m+1}-H_m$ is uniform in $\{-2,2\}$. Indeed, given the exploration, the map $h \mapsto 2H_m - h$ inside the domain enclosed by $\cL_m$ is an involution which inverts the sign of $H_{m+1}-H_m$. Therefore, conditioned on the entire sequence of essential level loops $\cL_0,\dots,\cL_M$, the random variables $\{H_{m+1}-H_m\}_{m=0}^{M-1}$ are i.i.d.\ uniform $\pm 2$.
In particular, conditioned on $M$, the sequence $(\frac 12 H_m)_{m=0}^M$ defines an $M$-step simple symmetric random walk started from 0.

Our main technical result about the level lines of the height function is the following, which shows that, with high probability, the number of essential level loops in an annulus is linear in the logarithm of the aspect ratio of the outer and inner boundaries of the annulus. More specifically, we provide an upper bound on the number of such loops that intersect the annulus and a lower bound on the number of such loops that are entirely contained in the annulus.

\begin{proposition}\label{lem:level-lines-main}
There exist constants $C,c>0$ such that the following holds.
Let $k \ge 200$, let $a \ge 1$ and let $D$ be a domain.
Let $N$ be the number of essential level loops contained in $A_{k,2^ak}$ and let $N' \ge N$ be the number of essential level loops intersecting $A_{k,2^ak}$. 
Then
\begin{equation}
\phi_D^0(N' \ge n) \le e^{-cn \log (\frac na \wedge k)} \qquad\text{for any }n \ge Ca \label{eq:small}
\end{equation}
and, if $D$ contains $\Lambda_{2^ak}$, then
\begin{equation}\label{eq:large}
\phi_D^0(N \le ca) \le Ce^{-ca} .
\end{equation}
\end{proposition}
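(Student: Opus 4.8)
The plan is to analyze the two bounds \eqref{eq:small} and \eqref{eq:large} separately, exploiting the key structural observation already recorded in the excerpt: conditioned on the full sequence of essential level loops $\cL_0,\dots,\cL_M$, the heights $(\tfrac12 H_m)_{m=0}^M$ perform a simple symmetric random walk started at $0$. The whole proof is really a translation, via the Russo--Seymour--Welsh estimate, between ``number of essential loops crossing a dyadic annulus'' and ``number of annular scales inside $A_{k,2^ak}$'', combined with random-walk recurrence estimates.

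For the lower bound \eqref{eq:large}, I would slice $A_{k,2^ak}$ into roughly $a$ disjoint dyadic sub-annuli $A_j := A_{2^{j}k, 2^{j+1}k}$ for $j=0,\dots,a-1$, and further into concentric sub-sub-annuli of bounded aspect ratio so that the RSW hypotheses ($\Lambda_{\rho n,n}\subset D\subset \Lambda_{Rn}$ with $\dist(\Lambda_{\rho n,n},\partial D)\ge\ve n$) are met inside the domain $D'$ enclosed by the current essential loop. In each scale, the RSW bounds \eqref{eq:ha1}--\eqref{eq:ha2} together with a standard circuit/duality argument (finding in a bounded-ratio annulus a $\times$-circuit on which the height is constant, with probability bounded below by an absolute constant $c_0>0$) guarantee that, conditioned on everything explored so far, with probability at least $c_0$ there is at least one further essential level loop of the required type in that scale. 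This gives a stochastic lower bound on $N$ by a sum of $\gtrsim a$ independent $\mathrm{Bernoulli}(c_0)$ variables, and a Chernoff bound then yields $\phi_D^0(N\le ca)\le Ce^{-ca}$; the condition $\Lambda_{2^ak}\subset D$ is exactly what is needed so that all $a$ scales genuinely lie inside $D$. One must be a little careful that the RSW constant degrades if $k$ is too small relative to the height increment, but since increments are $\pm2$ and $k\ge200$, the constant in the theorem applies uniformly. Note that here we do \emph{not} even need the random-walk structure — only that essential loops are plentiful; the random walk is relevant only to ensure that, between consecutive essential loops $\cL_m$ and $\cL_{m+1}$, the increment is genuinely $\pm2$ and the procedure does not terminate prematurely, but this is automatic from the symmetry involution $h\mapsto 2H_m-h$ already noted.

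For the upper bound \eqref{eq:small}, the point is that each essential level loop changes $|H_m|$ by $2$, so if $N'$ essential loops intersect $A_{k,2^ak}$ then the corresponding random walk $(\tfrac12 H_m)$ takes $N'$ steps while staying confined — crossing the $a$ dyadic scales of the annulus imposes structure, but the real constraint is that $N'$ level loops of distinct heights must geometrically fit, nested, inside an annulus of only $a$ dyadic scales, so by RSW/circuit arguments many of them must be ``wasted'' (i.e., many nested loops of the same height between consecutive essential ones, which happens with small probability), OR the random walk must return near $0$ many times. More precisely: partition the $N'$ essential loops by which dyadic scale $A_j$ they fall in; if $n=N'\ge Ca$ then some scale contains at least $n/a$ essential loops nested within a bounded-ratio region, and I would show that having $\ell$ nested essential loops within $O(1)$ dyadic scales forces, at each of $\Omega(\ell)$ bounded-ratio sub-annuli, the existence of a level circuit, an event whose conditional probability is at most $1-c$ independently across the $\Omega(\ell)$ sub-annuli by the upper RSW bound; hence this has probability at most $(1-c)^{\Omega(n/a)}$. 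To upgrade $\log(\tfrac na)$ to $\log(\tfrac na\wedge k)$ I would additionally note that within a single lattice-scale $k$ one cannot fit more than $O(k)$ nested $\times$-loops at all, so the ``many loops in few scales'' event is impossible once $n/a\gtrsim k$, and otherwise one gets an entropy-style bound producing the $\log$ factor (each of the $\sim n/a$ nested loops in a fixed scale must be strictly inside the previous, and the probability of a crossing of height $\ge$ a prescribed value at the relevant sub-annulus is bounded by $1-c$, giving $e^{-cn\log(n/a)}$ after optimizing the sub-annulus sizes, and the $\wedge k$ reflects the hard combinatorial cap). Summing the dyadic scales costs only a factor $a$, absorbed into the constants since $n\ge Ca$.

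The main obstacle I anticipate is the quantitative upper bound \eqref{eq:small}, specifically extracting the $\log(\tfrac na\wedge k)$ gain rather than a bare exponential. This requires a careful iterated RSW argument: one must choose the sub-annuli in a given dyadic scale so that their aspect ratios shrink geometrically (to pack $\sim n/a$ of them and get $\sim n/a$ independent small-probability events, each of probability bounded by $1-c$ via \eqref{eq:ha1}), while staying inside the exploration-conditioned domain and respecting the RSW constants' dependence on the aspect ratio; the clean way is probably to run an exploration from outside in, at each step revealing the next essential loop, and to bound the conditional probability that the next essential loop appears ``too soon'' (before the aspect ratio has dropped by a fixed factor) by $1-c$ using the upper RSW bound on the crossing event, then invoke the random-walk picture to handle the case where loops appear at the right rate but the walk lingers near $0$. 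Getting the $\wedge k$ is a separate, purely combinatorial point (a $\times$-loop around the origin at distance $\sim r$ from it has length $\gtrsim r$, and distinct nested essential loops are separated by at least one lattice step, capping their number at $O(k)$ in the innermost scale $\Lambda_{2k}\setminus\Lambda_k$), so I would handle it last as a simple deterministic bound.
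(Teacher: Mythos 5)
There are genuine gaps in both halves, the more serious one being in the upper bound \eqref{eq:small}. Your accounting does not produce the stated rate: pigeonholing the $n$ loops into $a$ dyadic scales and extracting $\Omega(n/a)$ crossing events, each of conditional probability at most $1-c$, yields only $e^{-cn/a}$, which for large $a$ is weaker even than $e^{-cn}$ and nowhere near $e^{-cn\log(\frac na\wedge k)}$; the claim that ``optimizing the sub-annulus sizes'' upgrades this to $e^{-cn\log(n/a)}$ is not an argument, since $n/a$ events of probability $1-c$ each can never give more than $e^{-cn/a}$. The missing idea is to make \emph{each} of the $n$ loops pay a polynomially small price: if two consecutive essential loops meet the positive $x$-axis at radii within a multiplicative factor $1-2^{-r}$ of each other, the inner one produces a monochromatic $\times$-crossing of a thin annulus spanning $r$ dyadic sub-scales, and such a crossing has probability $e^{-cr}$ uniformly over the conditioning --- this requires a separate decoupling estimate (\cref{lem:tentacles}, proved via comparison of boundary conditions for $|h|$ and a symmetry trick), not a single application of the RSW upper bound. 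Since all but at most $2^r a\log 2$ of the $n$ loops intersecting $A_{k,2^ak}$ must be ``close'' to their predecessor in this sense, taking $r\asymp\log_2(\frac na\wedge k)$ and a union bound gives \eqref{eq:small}. This also explains the $\wedge k$: it is the requirement that the thin annulus have inner radius at least an absolute constant so the crossing estimate applies. Your justification of $\wedge k$ as a deterministic cap is incorrect --- the scale $A_{2^jk,2^{j+1}k}$ has radial width $2^jk\gg k$ and can accommodate far more than $O(k)$ nested loops, and in any case \eqref{eq:small} asserts a probabilistic bound, not impossibility, when $n/a\ge k$.

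For the lower bound \eqref{eq:large}, your RSW-and-gluing argument in each dyadic sub-annulus correctly shows that, with probability $1-e^{-ca}$, at least $ca$ essential loops \emph{surround} $\Lambda_k$ (this is Step 1 of the paper's proof, via \cref{cor:inside_quick} and geometric domination of the scale drops). But $N$ counts essential loops \emph{contained in} $A_{k,2^ak}$, and when $D$ is much larger than $\Lambda_{2^ak}$ the early essential loops can surround $\Lambda_k$ while sending long tentacles far outside $\Lambda_{2^ak}$, so they do not contribute to $N$. Your remark that ``$\Lambda_{2^ak}\subset D$ is exactly what is needed'' misses this: one must separately bound the number of essential loops intersecting $A_{2^ak,2^{a+1}k}$ (the paper does this by feeding \eqref{eq:small} back in) and, for arbitrary $D$, first localize to the domain enclosed by the outermost essential loop approaching $\Lambda_{2^ak}$. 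This is exactly what the paper flags as ``the main issue to overcome'', and it is absent from your proposal.
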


We point out that there is no assumption on the domain in~\eqref{eq:small}.

\subsubsection{Corollaries to Proposition~\ref{lem:level-lines-main}}
We give four corollaries to \cref{lem:level-lines-main} here. The first two will be used for the proof of \cref{thm:colorings-are-locally-mixing}, whereas the second two will not and simply provide additional information.
The former two focus on level loops whose heights are multiples of 6. Such level loops correspond in the coloring representation to monochromatic loops of color 0 in the even lattice. This conforms with the convention that the height function is even on the even lattice and with the fact that the coloring is the modulo 3 of the height function.

The first corollary shows that the origin is typically surrounded by a level loop of height 0 modulo 6 which lies inside an annulus of a fixed aspect ratio. This will later allow us to relate various boundary conditions which are arbitrarily far away from a given box to zero boundary conditions which are near the boundary of the box.

\begin{corollary}\label{lem:monochromatic_loop}
There exist $C,\alpha>0$ such that the following holds. Let $k \ge 1$, let $\Delta>1$ and let $D$ be a domain containing $\Lambda_{\Delta k}$. Then
\[ \phi^0_D(\Lambda_k\text{ is surrounded by a level loop of height 0 modulo 6 inside }\Lambda_{\Delta k}) \ge 1-C\Delta^{-\alpha}. \]
Moreover, the same bound holds under the measure $\phi^\kappa_D$ for any finite simply connected set $D$ containing $\Lambda_{\Delta k}$ and any admissible boundary condition $\kappa$ on $\partial D$ such that $\bigcup_{v \in \partial D} \kappa_v$ is contained in some interval of size $10$.
\end{corollary}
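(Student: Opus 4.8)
The corollary should follow fairly directly from the lower bound \eqref{eq:large} in \cref{lem:level-lines-main} together with the fact that the heights along the essential level loops perform a simple symmetric random walk. First I would treat the base case $\phi^0_D$ with $D$ a domain. Set $k' = \max(k, 200)$ (absorbing the constraint $k \ge 200$ needed for the proposition into the constant $C$, since for $\Delta$ below a fixed threshold the bound is vacuous), and apply \eqref{eq:large} with $2^a k'$ playing the role of $\Delta k$, i.e. take $a = \lfloor \log_2 \Delta \rfloor$ (so $2^a \le \Delta$, whence $D \supseteq \Lambda_{\Delta k} \supseteq \Lambda_{2^a k'}$ once we also check $k' \le \Delta k$, which again holds for $\Delta$ past a constant). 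Then with probability at least $1 - Ce^{-ca}$ there are at least $ca$ essential level loops $\cL_m$ fully contained in the annulus $A_{k', 2^a k'} \subseteq \Lambda_{\Delta k}$, each surrounding the origin. Every such loop surrounds $\Lambda_k$ as well, since $k \le k'$.

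\textbf{The random-walk step.} Conditioned on the entire nested family $\cL_0, \dots, \cL_M$, the normalized heights $(\tfrac12 H_m)$ form a simple symmetric random walk started at $0$, as established in the text preceding \cref{lem:level-lines-main}. So on the event $\{N \ge ca\}$ we have at least $ca$ consecutive steps of this walk available among the loops sitting inside the annulus; I want at least one of those loops to have height $\equiv 0 \pmod 6$, i.e. the walk $(\tfrac12 H_m)$ to hit $3\Z$ during the corresponding block of steps. Since a simple symmetric random walk on $\Z$ started at a point of $3\Z$ (namely $0$) returns to $3\Z$ within the first $t$ steps with probability at least $1 - e^{-c't}$ — and more robustly, a SRW run for $t$ steps from \emph{any} starting point hits $3\Z$ with probability $\ge 1-e^{-c't}$, since in each window of $9$ steps it has a constant chance to cross a multiple of $3$ — the probability that \emph{none} of the $\ge ca$ loops in the annulus has height $\equiv 0 \pmod 6$ is at most $e^{-c'' a}$. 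Combining with the $Ce^{-ca}$ failure probability of \eqref{eq:large} and translating $a = \lfloor \log_2 \Delta\rfloor$ back into a power of $\Delta$ gives the claimed bound $1 - C\Delta^{-\alpha}$ for a suitable $\alpha = \alpha(c'') > 0$.

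\textbf{The general boundary condition.} For the "moreover" part, $D$ is only finite and simply connected and $\kappa$ is an admissible boundary condition on $\partial D$ whose values all lie in an interval $I$ of length $10$. The point is that \eqref{eq:large}, and indeed all of \cref{lem:level-lines-main}, is proved via the RSW estimate and FKG, which are insensitive to such bounded perturbations of the boundary data. Concretely, I would run the same nested-level-loop construction under $\phi^\kappa_D$ starting from the outermost level loop of $|h - t|=2$ for the midpoint $t$ of $I$ (note $h - t$ is again essentially a height function after adjusting parity bookkeeping; alternatively shift so $I \subset [-5,5]$), and note that the comparison-between-boundary-conditions half of \cref{prop:FKG_h} sandwiches $\phi^\kappa_D$ between $\phi^{B,\kappa^-}_D$ and $\phi^{B,\kappa^+}_D$ where $\kappa^\pm$ are the constant boundary conditions at the endpoints of $I$; since these differ from a flat boundary condition by an additive constant of size $\le 10$, the annulus count $N$ is stochastically comparable (up to an additive $O(1)$, namely losing at most the few outermost loops needed to ``equilibrate'' the extra height $10$) to the one under $\phi^0$. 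Thus \eqref{eq:large} gives $\ge ca - C$ essential level loops in $A_{k,\Delta k}$, and repeating the random-walk argument — now the walk starts within distance $5$ of $3\Z$, which only costs a constant factor — yields the same conclusion.

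\textbf{Main obstacle.} The genuine content is entirely in \cref{lem:level-lines-main}, which I am assuming; given it, the only real care needed here is (i) making sure the simple-random-walk hitting estimate for the residue class $3\Z$ is uniform in the (random) starting point and in the number of loops, which is routine, and (ii) the bookkeeping in the "moreover" part to show that an additive shift of the boundary height by a bounded amount changes the essential-level-loop count by only a bounded amount — this is where one must be slightly careful, using the comparison between boundary conditions of \cref{prop:FKG_h} rather than any monotonicity in $|h|$ (since the shifted boundary condition need not be $|h|$-adapted), and absorbing the resulting $O(1)$ loss into the constants. Neither step is a serious difficulty.
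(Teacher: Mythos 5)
Your first part (the bound under $\phi^0_D$) is correct and matches the paper: apply the lower bound on the number $N$ of essential level loops in $A_{k,\Delta k}$, note that $(\tfrac12 H_m)$ is a simple symmetric random walk, and use the exponential hitting estimate for $3\Z$ from an arbitrary starting point. (You in fact correctly invoke \eqref{eq:large} here, whereas the paper writes ``\eqref{eq:small}'', which appears to be a typo.) One small bookkeeping slip: setting $k'=\max(k,200)$ and $a=\lfloor\log_2\Delta\rfloor$ while keeping the same $\Delta$ does not guarantee $\Lambda_{2^a k'}\subset\Lambda_{\Delta k}$ when $k$ is small and $\Delta$ large; one should instead replace $(k,\Delta)$ by $(200,\Delta k/200)$ when $k<200$, which also works but needs to be said.

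The ``moreover'' part, however, has a genuine gap. You propose to sandwich $h\sim\phi^\kappa_D$ between $h^\pm$ with constant boundary data $a,b$ via the comparison in \cref{prop:FKG_h}, and then assert that the essential-level-loop count $N$ of $h$ ``is stochastically comparable up to an additive $O(1)$'' to that of $\phi^0$, after which you want to run the random-walk argument \emph{on the essential level loops of $h$ itself}. This does not follow: the number of essential level loops of a height function, and the loops themselves, are \emph{not} monotone functionals, so the pointwise domination $h^-\le h\le h^+$ gives no control over $N(h)$ or over the sequence $(H_m(h))$. (The constant-boundary measures $\phi^a_\cD,\phi^b_\cD$ do have the same $N$ as $\phi^0_\cD$, by translation of heights, but that is a statement about $h^\pm$, not about $h$.) The paper's proof circumvents this by never looking at the level loops of $h$ at all: it couples $h^+=h^-+20$ so that the essential level loops of $h^+$ and $h^-$ literally coincide, observes that on such a loop $\cL_m$ the value of $h$ is squeezed into $[H^-_m,H^-_m+20]$, and then argues topologically that if two such loops $\cL_i,\cL_j$ have $|H^-_i-H^-_j|\ge 26$ then $h$ must have a level loop of height $0$ mod $6$ between them. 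The random-walk input needed is therefore a \emph{range} estimate for $(\tfrac12 H^-_m)$ (reaching $\pm 13$ within the annulus block), not a hitting estimate for $3\Z$. Your sketch does not contain this transfer-to-$h$ idea, and the ``absorb an additive $O(1)$'' shortcut is not available. Finally, your sketch also omits the step of first passing from the arbitrary simply connected set $D$ to the largest domain $\cD\subset D$ containing $\Lambda_k$; this is needed because \cref{lem:level-lines-main} and the whole level-loop machinery are stated for domains (boundary in the even sublattice), not for arbitrary simply connected sets.
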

\begin{proof}
Recall that, given $M$, the sequence $(\frac 12 H_m)_{m=0}^M$ defines an $M$-step simple symmetric random walk.
Let $N$ be the number of essential level loops contained in $A_{k,\Delta k}$.
Since an $n$-step simple symmetric random walk (started from anywhere) visits $3\Z$ with probability at least $1-Ce^{-cn}$, it suffices to show that $\phi^0_D(N \le c\log\Delta) \le \Delta^{-\alpha}$.
This in turn follows from~\eqref{eq:small}.

Consider now the general case and suppose that $\kappa_v \subset [a+1,b-1]$ for all $v$ and some $a,b \in 2\Z$ having $b-a=20$.
Let $\cD$ be the largest domain (with respect to inclusion) contained in $D$ and containing $\Lambda_k$. It follows that every $v \in \partial \cD$ is at distance at most one from $\partial D$. In particular, under $\phi^\kappa_D$, the height on $\partial \cD$ must be between $a$ and $b$ everywhere.
Thus, by FKG, we can couple $h^- \sim \phi^a_\cD$, $h^+ \sim \phi^b_\cD$ and $h \sim \phi^\kappa_D$ so that $h^- \le h \le h^+$ and $h^+-h^-=20$ in $\cD$.
In particular, the essential level loops $\cL_1,\dots,\cL_M$ of $h^-$ and $h^+$ coincide.
As before, the number $N$ of essential level loops in $A_{k,\Delta k}$ is at least $c\log\Delta$ with probability at least $1-\Delta^{-\alpha}$. Note that if two of these $N$ loops, say $\cL_i$ and $\cL_j$, have height difference at least 26 in $h^-$ (equivalently, in $h^+$), then $h$ must have a level loop of height 0 mod 6 somewhere in between $\cL_i$ and $\cL_j$. Since an $n$-step simple random walk started from some $x$ reaches $x\pm 26$ with probability at least $1-Ce^{-cn}$, the required bound follows.
\end{proof}

The next corollary roughly says that if a domain contains a box whose size is of the same order as the in-radius of the domain, then, with constant probability, the first essential loop of height 0 modulo 6 inside the box will have height exactly 0 (not merely 0 modulo 6).

\begin{corollary}\label{lem:level_loop2}
There exist $K,c>0$ such that the following holds.
Let $k \ge K$, let $\Delta \ge 2$ and let $D$ be a domain such that $\Lambda_k \subset D  \not\supset \Lambda_{\Delta k}$. Then
\[ \phi^0_D(\text{first essential level loop of height 0 mod 6 inside $\Lambda_k$ has height }0) \ge \frac c{\sqrt{\log\Delta}}. \]
\end{corollary}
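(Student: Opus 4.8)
\textbf{Proof plan for Corollary~\ref{lem:level_loop2}.}

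The plan is to exploit the fact, observed just before \cref{lem:level-lines-main}, that conditioned on the sequence of essential level loops, the rescaled heights $(\tfrac12 H_m)_{m=0}^M$ perform a simple symmetric random walk started from $0$. The event in question is an event about this random walk: we want the walk to hit $3\Z$ (a multiple of $6$ in the original height) at a time $m$ where $H_m = 0$ itself, and for this to be the \emph{first} essential loop of height $0$ mod $6$ inside $\Lambda_k$. Since the walk starts at $0$, which is already in $3\Z$, the subtlety is that the essential loops $\cL_0, \cL_1$ are, by construction, the boundary loop and the outermost loop with $|h| = 2$; so when we begin looking at essential loops that lie inside $\Lambda_k$ we may already have drifted away from $0$. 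The upper bound hypothesis $D \not\supset \Lambda_{\Delta k}$ forces the annulus $A_{k,\Delta k}$ to stick out of $D$, so we actually want to control how far the walk can have wandered by the time its corresponding loop enters $\Lambda_k$: by \eqref{eq:small} applied to $A_{k, 2^a k}$ with $2^a \asymp \Delta$, the number $N'$ of essential loops that even \emph{intersect} $A_{k,\Delta k}$ is at most $C \log \Delta$ except with probability polynomially small in $\Delta$; so with good probability the walk has taken at most $C \log \Delta$ steps before the first loop inside $\Lambda_k$, hence the height $H_{m_0}$ of the first essential loop inside $\Lambda_k$ satisfies $|H_{m_0}| \le C' \sqrt{\log \Delta}$ with probability bounded below by a constant (by the Gaussian/CLT scaling of the walk, or a crude second-moment bound).

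Next I would run the random walk from that starting height. Condition on the exploration up to the first essential loop $\cL_{m_0}$ inside $\Lambda_k$ and on its height $H_{m_0} = 2x$ with $|x| \le C'\sqrt{\log\Delta}$. From that point the rescaled heights continue as a fresh simple symmetric random walk started from $x$. I want the probability that this walk, \emph{before} it first hits a point of $3\Z$, actually hits $0$ — equivalently, that the first multiple of $3$ visited by a SRW started from $x$ is $0$ itself. By the gambler's ruin / optional stopping estimate, a SRW started at a point $x$ with $|x| \le R$ hits $0$ before leaving the interval $(-R-3, R+3)$ with probability at least $c/R$; and since hitting $0$ is one way of hitting $3\Z$, the chance that the \emph{first} element of $3\Z$ hit is $0$ is at least $c/R$ as well (one can be cleaner: condition on the walk being at the nearest multiple-of-$3$-free excursion; or simply note $\P_x(\text{hit }0\text{ before }\{-3,3,\dots\}) \ge \P_x(\text{hit }0\text{ before }\pm\infty\text{-escape of }(-R,R)) \cdot (\text{const})$). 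With $R = C'\sqrt{\log\Delta}$ this gives probability at least $c/\sqrt{\log\Delta}$. One must also make sure the relevant loops are genuinely \emph{inside} $\Lambda_k$ and that the procedure does not terminate prematurely ($M = m$) before reaching a height-$0$ loop: the number of extra steps needed is $O(\log\Delta)$, so a lower bound on $N$ of the form $N \ge c\log\Delta$ inside an even smaller sub-box — which follows from \eqref{eq:large} after first using the RSW estimate to produce, with constant probability, a domain inside $\Lambda_k$ of comparable size and in-radius — keeps us safely within the box; the constant probability loss here is harmless.

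Assembling: intersect the constant-probability event $\{|H_{m_0}| \le C'\sqrt{\log\Delta}\}$ (from \eqref{eq:small} plus walk concentration), the constant-probability event that there are enough essential loops inside $\Lambda_k$ to run the walk (from the RSW estimate and \eqref{eq:large}), and the conditional probability $\ge c/\sqrt{\log\Delta}$ that the walk hits $0$ before any other multiple of $3$. Multiplying gives the claimed bound $c/\sqrt{\log\Delta}$. I expect the main obstacle to be bookkeeping of the geometry rather than probability: one has to be careful that "first essential loop of height $0$ mod $6$ inside $\Lambda_k$" refers to the correct indexing after discarding the $O(\log\Delta)$ loops that straddle or sit outside $\Lambda_k$, and that the involution/symmetry argument (which gives the i.i.d.\ $\pm 2$ increments) is applied only to loops genuinely surrounded by the current exploration inside $D$ — so that the random walk description remains valid and is not corrupted by the fact that $D$ fails to contain $\Lambda_{\Delta k}$. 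The starting-height estimate $|H_{m_0}| \lesssim \sqrt{\log\Delta}$ is what makes the $1/\sqrt{\log\Delta}$ (rather than, say, $1/\log\Delta$) come out, so getting the right power there — i.e.\ using that $N'$ is $O(\log\Delta)$ and hence the walk has standard deviation $O(\sqrt{\log\Delta})$ — is the crux.
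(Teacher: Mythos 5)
Your high-level plan --- bound $N'$ via \eqref{eq:small}, guarantee enough loops inside $\Lambda_k$ via \eqref{eq:large}, then analyze the random walk $(\tfrac12 H_m)$ --- matches the paper's, and the answer $c/\sqrt{\log\Delta}$ is correct, but the random-walk step at the heart of your argument is wrong. You assert that a nearest-neighbor walk started from $x$ with $|x|\le R$ hits $0$ as the \emph{first} element of $3\Z$ it visits with probability at least $c/R$. This probability is in fact exactly zero whenever $|x|\ge 4$: a $\pm1$-step walk must pass through $\pm3$ before it can reach $0$, so the first multiple of $3$ visited lies in $\{3\lfloor x/3\rfloor, 3\lceil x/3\rceil\}$, and neither of these is $0$ unless $|x|\le 3$. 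Your parenthetical patch has the inequality reversed: $\P_x(\text{hit }0\text{ before }3\Z\setminus\{0\})\le\P_x(\text{hit }0\text{ before leaving }(-R,R))$, not $\ge$, since escaping $(-R,R)$ forces passage through a nonzero multiple of $3$. So the gambler's-ruin step contributes only a constant factor when the starting point is within distance $3$ of $0$, and nothing at all otherwise --- it cannot produce the $1/\sqrt{\log\Delta}$.

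The $1/\sqrt{\log\Delta}$ must instead come from the local CLT. Your intermediate step, that $|\tfrac12 H_{m_0}|\le C'\sqrt{\log\Delta}$ with constant probability, is true but far too weak; what is needed is that the walk sits within distance $3$ of $0$ at a time near $N'$, and that event has probability $\Theta(1/\sqrt{N'})=\Theta(1/\sqrt{\log\Delta})$, not constant probability. The paper implements this cleanly: set $N'':=N'+1+\1_{\{N'\text{ even}\}}$, the first even index strictly greater than $N'$. On the event $\{\tfrac12 H_{N''}=0\}$ (which requires $N\ge 2$ so that $\cL_{N''}$ lies inside $\Lambda_k$), the loop $\cL_{N''}$ is automatically the first essential level loop of height $0\bmod 6$ inside $\Lambda_k$, because $\tfrac12 H_{N''-1}=\pm1\notin 3\Z$. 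Hence the target probability is at least $\P(\tfrac12 H_{N''}=0)=2^{-N''}\binom{N''}{N''/2}\ge c/\sqrt{N''}$, and combining this with $\phi^0_D(N\ge2,\ N'\le C\log\Delta)\ge\tfrac12$ (from \eqref{eq:large} and \eqref{eq:small}) gives the statement.
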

\begin{proof}
Let $N$ be the number of essential level loops contained in $\Lambda_k$.
Note that the first $N':=M-N$ loops in $\cL_1,\dots,\cL_M$ are not contained in $\Lambda_k$ and therefore intersect the annulus $A_{k,\Delta k}$.
Given $N$ and $N'$, and on the event that $N \ge 2$, since $(\frac12 H_m)_{m=0}^M$ is a simple symmetric random walk started from 0, the probability that the first essential level loop of height 0 mod 6 inside $\Lambda_k$ exists and has height 0 is at least the probability that a simple random walk of length $N'' := N'+1+\1_{\{N'\text{ even}\}}$ ends at 0. The latter is $2^{-N''} \binom{N''}{N''/2} \ge c/\sqrt{N''}$.
Finally, \cref{lem:level-lines-main} implies that $\phi^0_D(N \ge 2,~N' \le C\log\Delta) \ge 1/2$, as long as $k$ is large enough. Combining these estimates yields the corollary.
\end{proof}

The next corollary shows that small oscillations in the boundary conditions do not effect much the distribution of the height function deep inside the domain. This implies that the convergence of $\mu^{ij}_D$ to $\mu$ extends to a wider class of boundary conditions; see \cref{rmk:sqrtlog}.

\begin{corollary}\label{lem:varying-bc}
For any $\ve>0$, there exist $\delta>0$ such that the following holds. Let $n \ge k \ge 1$, let $D$ be a domain containing $\Lambda_n$ and let $\tau \in \Z^{\partial D}$ be an admissible boundary condition such that $|\tau_v| \le \delta \sqrt{\log \frac nk}$ for all $v \in \partial D$. Then
\[ \dist_{\text{TV}}\left((\phi^\tau_D)_{|\Lambda_k}, (\phi^0_D)_{|\Lambda_k}\right) \le \ve. \]
\end{corollary}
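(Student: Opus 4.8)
The plan is to use the monotonicity for $|h|$ (Proposition~\ref{prop:FKG_modh}) together with Corollary~\ref{lem:monochromatic_loop} to produce, with high probability, an essential level loop of height $0$ surrounding $\Lambda_k$, inside which the two height functions can be coupled to agree. First I would symmetrize: writing $\tau$ only through its absolute values $|\tau_v|$ is not quite right since $\tau$ itself need not be $|h|$-adapted, so instead I would dominate. Let $\delta$ be chosen later and put $L := \lceil \delta\sqrt{\log(n/k)}\rceil$. Consider the symmetric boundary condition $\kappa$ on $\partial D$ given by $\kappa_v = [-L,L] \cap (2\Z + p(v))$, where $p(v)\in\{0,1\}$ is the parity of $v$; this is $|h|$-adapted with $B_{\rm sym}(\kappa)=\partial D$. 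Since $|\tau_v|\le L$ for all $v$, Proposition~\ref{prop:FKG_modh} (comparison between $|h|$-adapted boundary conditions, with $B_{\rm pos}$ empty on the zero side and all of $\partial D$ symmetric) gives a coupling of $h^0 \sim \phi^0_D$, $h^\tau \sim \phi^\tau_D$, and $h^\kappa \sim \phi^\kappa_D$ such that $|h^0| \le |h^\tau| \le |h^\kappa|$ pointwise on $D$. Actually the cleaner route is to couple only the two extremes $h^0 \le_{|h|}$-dominates and is dominated so that on the outermost level loop of height $0$ for $h^\kappa$ (where $|h^\kappa|=0$), all three height functions vanish; by the domain Markov property the conditional laws of $h^0$, $h^\tau$, $h^\kappa$ inside that loop are all $\phi^0$ of the enclosed domain, so we may couple them to agree identically there.

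Next I would invoke Corollary~\ref{lem:monochromatic_loop}, or rather the argument in its proof, under the measure $\phi^\kappa_D$: since $\bigcup_v \kappa_v \subset [-L,L]$ has size $2L+1$, that corollary's ``general case'' applies once $2L+1 \le 10$, but here $L$ grows with $n/k$, so I instead re-run the random-walk argument directly. Under $\phi^\kappa_D$ the essential level loops $\cL_1,\dots,\cL_M$ surrounding the origin have heights $(\tfrac12 H_m)$ evolving, conditionally on the loop sequence, as a simple symmetric random walk started from a value in $[-L/2,L/2]$. By Proposition~\ref{lem:level-lines-main}, the number $N$ of essential level loops contained in $A_{k,n}$ is at least $c\log(n/k)$ except with probability $C(n/k)^{-\alpha}$ — so I would take $k$ large enough, or absorb the low-probability event into $\ve$; actually since $n/k$ may be bounded I should instead phrase the whole estimate in terms of $\log(n/k)$ and note the conclusion is trivial (TV distance $\le \ve$ vacuously via $\delta$ small) when $n/k$ is bounded, by the comparison bound below. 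On the event $N \ge c\log(n/k)$, a simple random walk started within $[-L/2,L/2]$ hits $0$ within $N$ steps with probability at least $1 - \exp(-c N / L^2) \ge 1 - \exp(-c' \log(n/k) / (\delta^2 \log(n/k))) = 1 - (n/k)^{-c'/\delta^2}$. Choosing $\delta$ small makes this close to $1$. On this event there is an essential level loop of height exactly $0$ surrounding $\Lambda_k$ and contained in $A_{k,n} \subset D$, hence inside it $h^0$, $h^\tau$, $h^\kappa$ all agree, and in particular $(h^\tau)_{|\Lambda_k} = (h^0)_{|\Lambda_k}$.

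Putting the pieces together, the coupling of $h^0$ and $h^\tau$ constructed above satisfies $(h^0)_{|\Lambda_k} = (h^\tau)_{|\Lambda_k}$ outside an event of probability at most $C(n/k)^{-\alpha} + (n/k)^{-c'/\delta^2} + (\text{error from }N\text{ vs }N')$, which can be made $< \ve$ by first choosing $\delta$ small enough (to handle the random-walk return probability and to make the comparison estimate from Corollary~\ref{lem:monochromatic_loop}-style reasoning small) and then noting the bound is uniform in $n,k,D,\tau$. The total variation distance between $(\phi^\tau_D)_{|\Lambda_k}$ and $(\phi^0_D)_{|\Lambda_k}$ is at most the probability that the coupling disagrees on $\Lambda_k$, giving the claim. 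The main obstacle I anticipate is the one flagged above: the boundary condition $\kappa$ used to run the random-walk argument does not have $\bigcup_v\kappa_v$ contained in a bounded interval (it has size $\sim\delta\sqrt{\log(n/k)}$), so Corollary~\ref{lem:monochromatic_loop} cannot be cited verbatim; one must re-derive the monochromatic-loop estimate allowing the random walk to start from a point at distance $O(L)$ from the origin, and track the dependence of the return probability on $L^2$ versus the number of available scales $\log(n/k)$ — it is precisely the ratio $L^2 / \log(n/k) = \delta^2$ that must be small, which is why the hypothesis reads $|\tau_v| \le \delta\sqrt{\log(n/k)}$ rather than merely $|\tau_v| = O(1)$.
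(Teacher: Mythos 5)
Your high-level strategy matches the paper's: reduce to symmetric boundary conditions, use the random-walk structure of the essential level loops to show a level loop of height $0$ surrounds $\Lambda_k$ with probability $\ge 1-\ve$, and couple the two height functions to agree inside that loop. And you correctly flag that \cref{lem:monochromatic_loop} cannot be cited verbatim when $\bigcup_v\kappa_v$ has size growing with $n/k$. But there is a genuine gap in the coupling step that your ``cleaner route'' does not repair.

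The claim ``$|h^0|\le|h^\tau|\le|h^\kappa|$'' is not available, and the second inequality is in fact false in general. The boundary condition $\tau$ is a singleton $\{\tau_v\}$ at each $v$, which is not $|h|$-adapted whenever some $\tau_v<0$, so \cref{prop:FKG_modh} does not apply to the pair $(\tau,\kappa)$ at all. Worse, even when $\tau$ is $|h|$-adapted (say $\tau\equiv b>0$ constant), \cref{prop:FKG_modh} gives the \emph{opposite} direction: with $\kappa_v=[-L,L]$ symmetric one has $B_{\rm pos}(\kappa)=\emptyset\subset B_{\rm pos}(\{b\})=\partial D$ and $\kappa_v\cap\Z_+=[0,L]\le[b,b]$ coordinatewise, so $|h^\kappa|\preceq_{\rm st}|h^b|$, not $|h^b|\preceq_{\rm st}|h^\kappa|$. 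Your ``cleaner route'' then only couples $h^0$ with $h^\kappa$; the assertion that ``all three height functions vanish'' on the height-$0$ loop of $h^\kappa$ has no justification for $h^\tau$, since $h^\tau$ was never placed in the coupling. The paper avoids this entirely by using the $h$-FKG comparison (\cref{prop:FKG_h}, not $|h|$-FKG) to sandwich $\phi^\tau_D$ and $\phi^0_D$ between the two \emph{constant} measures $\phi^{-b}_D$ and $\phi^b_D$, and then constructs a direct coupling of $\phi^{b}_D$ with $\phi^{-b}_D$ exploiting the global sign symmetry: couple the essential level loops to coincide, take the height increments to be opposite ($H^+_{m+1}-H^+_m=-(H^-_{m+1}-H^-_m)$) so the two walks reach $0$ simultaneously, and glue inside the first such loop. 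That step — using sign symmetry rather than $|h|$-monotonicity to bring $h^\tau$ close to $h^0$ — is the missing idea.

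A smaller issue: the claimed return probability $1-\exp(-cN/L^2)$ for a simple random walk started within distance $L$ of $0$ to hit $0$ in $N$ steps is too optimistic; the correct order is $1-O(L/\sqrt{N})$ (gambler's ruin / reflection). With $L\asymp\delta\sqrt{a}$ and $N\asymp a$ this still gives failure probability $O(\delta)$, so your conclusion ``choose $\delta$ small'' survives, but the stated bound is wrong. Finally, your treatment of the regime where $\log(n/k)$ is bounded is handwaved; the paper's fix (shrink $\delta$ so that $b=\max|\tau_v|=0$ whenever $a<a_0$, making the statement trivial) is the clean way to close it.
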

\begin{proof}
Denote $a := \lceil \log \frac nk \rceil$ and $b := \max |\tau_v| \le \delta \sqrt{a}$. It suffices to show that the total variation distance between the restrictions of $\phi^b_D$ and $\phi^{-b}_D$ to $\Lambda_k$ is at most $\ve$, as the two measures $\phi^\tau_D$ and $\phi^0_D$ are stochastically between these by FKG. Let $h^{\pm}$ be sampled from $\phi^{\pm b}_D$, and coupled as follows. First, couple their essential level loops $\cL_1,\dots,\cL_M$ to coincide. Second, couple their heights on these loops, denoted $H^{\pm}_m$, so that their absolute values increase/decrease together (i.e., $H^+_{m+1}-H^+_m = -(H^-_{m+1}-H^-_m)$) until the first time $M'$ where they both reach $H^{\pm}_{M'}=0$, at which point one may couple $h^+$ and $h^-$ so that they coincide in the domain $\cD$ enclosed by $\cL_{M'}$. This coupling shows that $\dist_{\text{TV}}(h^+_{|\Lambda_k},h^-_{|\Lambda_k}) \le \P(\cD\text{ does not contain }\Lambda_k)$. Let $N$ be the number of essential level loops surrounding $\Lambda_k$. By~\eqref{eq:large}, there exists a universal constant $c>0$ and a constant $a_0=a_0(\ve)$ such that $\phi_D^0(N \ge ca) \ge 1-\ve$ if $a \ge a_0$. Standard estimates yield that a simple random walk of length $ca$ started from $\lfloor \delta \sqrt{a} \rfloor$ hits 0 with probability at least $1-\ve$ if $\delta=\delta(\ve)>0$ is chosen small enough. Thus, $\phi_D^0(\Lambda_k \subset \cD) \ge 1-2\ve$ as long as $a \ge a_0$. Finally, note that by decreasing $\delta$ if necessarily, we can ensure that $a<a_0$ implies that $b=0$, in which case the desired statement is trivial.
\end{proof}

\begin{remark}\label{rmk:sqrtlog}
Recall that \cref{thm:colorings-are-locally-mixing} implies that $\mu^{ij}_D$ converges to $\mu$ as $D$ increases to $\Z^2$ along simply connected finite subsets.
Say that a feasible boundary condition $\xi \in \{0,1,2\}^{\partial D}$ has oscillation at most $m$ if there is a height function $\tau$ on $D$ such that $\xi$ equals $\tau$ modulo 3 on $\partial D$ and $|\tau|\le m$ on $\partial D$. Then \cref{lem:varying-bc} implies that $\mu^\xi_D$ converges to $\mu$ as long as $\xi$ has oscillation at most $o(\sqrt{\log (r_D)})$, where $r_D$ is the largest $n$ such that $\Lambda_n \subset D$.
\end{remark}

We end this section with a result about correlation decay for the height function and for the colorings. While the latter decays as a power-law in the in-radius of the domain, the former only decays as a power-law in the logarithm of the in-radius. This shows that a power-law rate in \cref{thm:colorings-are-locally-mixing} is best possible. The argument for the colorings (corresponding to the statement about the height modulo 3) was suggested to us by Ron Peled.

\begin{corollary}\label{cor:correlations}
There exist constants $C,c,\alpha,\beta>0$ such that for any $n \ge 2$ and any domain $D$ having $\Lambda_n \subset D \not\supset \Lambda_{n+1}$,
\[ \frac13 + \frac{c}{n^{\alpha}} \le \phi^0_D(\text{height at origin is 0 modulo 3}) \le \frac13 + \frac{C}{n^\beta} \]
and
\[ \frac{c}{(\log n)^{3/2}} \le \phi^0_D(\text{height at origin is 0}) - \phi^2_D(\text{height at origin is 0}) \le \frac{C}{(\log n)^{3/2}} .\]
\end{corollary}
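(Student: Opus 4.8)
\textbf{Proof proposal for \cref{cor:correlations}.}

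The plan is to deduce all four bounds from the simple-random-walk description of the heights along the essential level loops $\cL_1,\dots,\cL_M$, combined with \cref{lem:level-lines-main}. Recall that under $\phi^0_D$, conditioned on the sequence of essential level loops, the variables $\{\tfrac12(H_{m+1}-H_m)\}$ are i.i.d.\ uniform $\pm 1$, so $(\tfrac12 H_m)_{m=0}^M$ is an $M$-step simple symmetric random walk started at $0$. Since $D$ contains $\Lambda_n$ but not $\Lambda_{n+1}$, the in-radius is exactly $n$; applying \cref{lem:level-lines-main} with $a=\lceil\log_2 n\rceil$ and the annulus $A_{1,n}$ (more precisely $A_{k_0,n}$ for a fixed constant $k_0$, absorbing the $k\ge 200$ requirement into constants) gives that with probability $1-Ce^{-c\log n}=1-Cn^{-c}$ we have $c\log n \le M \le C\log n$ — the lower bound from~\eqref{eq:large} and the upper bound from~\eqref{eq:small}. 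Thus the walk length is of order $\log n$ up to a polynomially small error, and the height at the origin is $H_M$ evaluated at the end of this walk (on the event that the origin is not surrounded by $\cL_M$, i.e. $M$ loops suffice, which is part of the same high-probability event).

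For the first pair of bounds, I would write $p := \phi^0_D(\text{height at origin}\equiv 0 \bmod 3)$. Conditioning on $M$, the height at the origin modulo 3 is $H_M \bmod 3$; since $\tfrac12 H_M$ is a simple random walk run for $M$ steps, $H_M \bmod 3$ is the position of a lazy-type random walk on $\Z/3\Z$ (each step $\pm 2 \equiv \pm 1 \bmod 3$), whose distribution converges to uniform geometrically fast: $|\P(H_M \equiv 0) - \tfrac13| \le e^{-cM}$ for a universal $c>0$, and moreover $\P(H_M\equiv 0) > \tfrac13$ strictly (the walk on $\Z/3\Z$ started at $0$ is biased toward $0$ at every even time and the bias does not fully cancel — one can check $\P(H_M\equiv 0)-\tfrac13 = (\text{const})(-1/2)^M$ up to sign bookkeeping, so in fact it alternates; I would instead lower-bound $p$ by conditioning on the event $M \le M_0$ for a suitable constant $M_0$, which has probability $\ge c>0$ by~\eqref{eq:large} being an upper tail only — wait, $M$ small is the bad direction for~\eqref{eq:large}). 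Let me restructure: for the lower bound $p \ge \tfrac13 + cn^{-\alpha}$, condition on the event $\{M \le M_1\}$ where $M_1$ is a large constant; by~\eqref{eq:small} (with $a$ a constant and $n=\Theta(1)$ there is no content, but the point is just that $\{M\le M_1\}$ has probability bounded below — actually $\{M \text{ small}\}$ can be made to have probability $\ge c n^{-\alpha}$ by forcing the first $\Theta(\log n)$ increments to alternate, keeping $|H|$ bounded, hence forcing loops to stop; this is where \cref{lem:level_loop2}-type reasoning enters). On this event, $H_M$ has a fixed distribution on a bounded set, bounded away from uniform mod 3, giving a constant excess over $\tfrac13$, and multiplying by the $\ge cn^{-\alpha}$ probability gives the claim. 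For the upper bound, condition on the high-probability event $M \ge c\log n$; on it, $|\P(H_M\equiv 0\mid M)-\tfrac13|\le e^{-cM}\le n^{-\beta}$, and the complementary event contributes at most $Cn^{-c}$; choosing $\beta$ to be the smaller of the two exponents finishes it.

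For the second pair of bounds, I would couple $h^+ \sim \phi^0_D$ and $h^- \sim \phi^2_D$ as in the proof of \cref{lem:varying-bc}: make their essential level loops coincide, couple their heights so that $|H^\pm_m|$ move together until the first index $M'$ at which $H^\pm_{M'}=0$, and then make $h^+=h^-$ inside $\cL_{M'}$. Then $\phi^0_D(\text{height at }0\text{ is }0) - \phi^2_D(\text{height at }0\text{ is }0)$ equals the expectation of $\1_{\{H^+_M=0\}}-\1_{\{H^-_M=0\}}$ over the coupling, and this vanishes on the event $\{M' \le M, \ \cL_{M'}\text{ surrounds the origin}\}$; so the difference is sandwiched between $\pm\P(\text{origin not surrounded by a height-}0\text{ essential loop})$ — but that is not delicate enough, since we need a two-sided match of order $(\log n)^{-3/2}$, not merely an upper bound. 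Instead I would compute the difference exactly as a random-walk quantity. Conditioning on $M$ and on the loop sequence, the two walks $(\tfrac12 H^+_m)$ and $(\tfrac12 H^-_m)$ start at $0$ and $1$ respectively (since the $\phi^2$ boundary, via a height function $\tau$ with $|\tau|$ matched, corresponds to starting height $\pm 2$, i.e. $|H^-_0|/2 = 1$; here one uses that $\cL_1$ has $|h|=2$ in both, and the sign is what differs). The difference in question is $\P_0(\text{SRW of length }M\text{ hits }0\text{ at time }M) - \P_1(\cdot) = \P_0(S_M=0)-\P_1(S_M=0)$, which for $M$ of order $\log n$ is $\Theta(M^{-1/2}) - \Theta(M^{-3/2}) \cdot(\text{something})$ — more precisely, by the reflection principle or local CLT, $\P_0(S_M=0)-\P_1(S_M=0)=\P_0(S_M=0)-\P_0(S_M=1)$ which is $\Theta(M^{-3/2})$ (the discrete derivative of the local CLT Gaussian at its peak, times the parity indicator, is of order $M^{-3/2}$). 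Averaging over the law of $M$, which is concentrated around $c\log n$, and controlling the $Cn^{-c}$-probability bad event (on which the difference is trivially bounded by $1$, negligible against $(\log n)^{-3/2}$), yields both the upper and lower bounds of order $(\log n)^{-3/2}$.

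The main obstacle I anticipate is the lower bound $p \ge \tfrac13 + cn^{-\alpha}$ for the modulo-3 correlation: it requires exhibiting an event of probability $\ge cn^{-\alpha}$ — not merely $\ge c>0$ — on which the height at the origin is correlated with $0$ mod 3, and the natural such event (the walk along the level loops staying near $0$, forcing the loop sequence to terminate early enough that the mod-3 distribution has not equilibrated) needs the lower tail estimate for $M$ together with a matching of absolute values so that the loops genuinely stop; making this precise is exactly the content that Ron Peled's argument supplies, and it is the step where one must be most careful about which boundary conditions and exploration procedures are invoked. The second-moment-style computation $\P_0(S_M=0)-\P_0(S_M=1)=\Theta(M^{-3/2})$ is standard (local central limit theorem with the Edgeworth/parity correction) but needs to be stated with explicit constants uniform over the relevant range $M=\Theta(\log n)$, and one must check that the randomness of $M$ (rather than a deterministic value) does not destroy the $(\log n)^{-3/2}$ order — this follows since $M$ is concentrated in a window $[c\log n, C\log n]$ and $M\mapsto M^{-3/2}$ varies by only a constant factor across it.
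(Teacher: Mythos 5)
The key technical move you are missing is the one that makes the whole computation clean: the paper does not work with the $\pm 2$--increment loops $\cL_1,\dots,\cL_M$ when tracking the height at the origin. It introduces the finer family $\cL'_1,\dots,\cL'_N$ of outermost level loops with $\pm 1$ increments. Conditioned on $N$, the height $X=h(0,0)$ is then \emph{exactly} the sum of $N$ i.i.d.\ uniform $\pm1$ variables, so $\E[e^{i\theta X}\mid N]=\cos(\theta)^N$; since $X$ is even (the origin is an even vertex), $N$ is automatically even, so $\E[\cos(\tfrac{2\pi X}3)\mid N]=(-\tfrac12)^N=2^{-N}>0$, and one gets the identity $\phi^0_D(X\in3\Z)=\tfrac13+\tfrac23\,\E[2^{-N}]$. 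That identity reduces everything to two-sided tail bounds on $N$.

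Your version uses $M$ only and runs into two genuine obstructions that you notice but do not resolve. First, identifying $h(0,0)$ with $H_M$ is not justified: the stopping rule for the $\cL_m$'s only says that no $\pm2$ level loop surrounds the origin inside $\cL_M$, which does not force $h(0,0)=H_M$ (the origin can be a local extremum with $h(0,0)=H_M\pm2$, for instance, with no surrounding loop at that height), and your parenthetical about ``the origin is not surrounded by $\cL_M$'' contradicts the construction, in which every $\cL_m$ surrounds the origin by definition. Second, even accepting $h(0,0)=H_M$, the parity of $M$ is uncontrolled, so $(-\tfrac12)^M$ alternates in sign; you flag this (``in fact it alternates'') but the attempted fix is left hanging. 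The same parity problem kills your computation for the second pair of bounds: with $S_m=\tfrac12H_m$, for a \emph{fixed} $M$ exactly one of $\P_0(S_M=0)$ and $\P_0(S_M=1)$ is zero, so $\P_0(S_M=0)-\P_0(S_M=1)$ is not $\Theta(M^{-3/2})$ conditionally but rather alternates in sign, and averaging over $M$ could in principle cancel. Using $N$ (always even) avoids this: $\P(X=0\mid N)-\P(X=2\mid N)=2^{-N}\bigl[\binom{N}{N/2}-\binom{N}{N/2+1}\bigr]=\tfrac{2^{-N}}{N/2+1}\binom{N}{N/2}=\Theta(N^{-3/2})$, with a definite sign.

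Finally, once you switch to $N$, you need two-sided tail bounds for $N$, not $M$. The lower tail $\P(N\le c\log n)\le n^{-\zeta}$ is immediate from $N\ge M$ and \eqref{eq:large}, but the upper tail of $N$ does \emph{not} follow directly from \eqref{eq:small} (that bounds $M$, and $N\ge M$). The paper gets it via a renewal argument: given $N$, the value of $M$ is $\max\{m:T_1+\cdots+T_m\le N\}$, where the $T_i$ are i.i.d.\ copies of the time for a $\pm1$ walk to first reach $\pm2$; Markov's inequality then turns the upper tail of $M$ into an upper tail of $N$. This ingredient is entirely absent from your proposal and would be needed to make the lower bound $\phi^0_D(X\in3\Z)-\tfrac13\ge cn^{-\alpha}$ and the lower bound of order $(\log n)^{-3/2}$ go through.
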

\begin{proof}
Let $h$ be sampled from $\phi^0_D$ and denote $X := h(0,0)$. Since $X$ is symmetric,
\[ \P(X \in 3\Z) - \tfrac13 = \tfrac23\big[\P(X \in 3\Z) - \tfrac12 \P(X \notin 3\Z)\big] = \tfrac23 \E[\cos(\tfrac{2\pi X}3)] .\]
Recall that the essential level loops $\cL_1,\dots,\cL_M$ were defined as the outermost level loops with $\pm 2$ height increments. Define level loops $\cL'_1,\dots,\cL'_N$ similarly as the outermost level loops with $\pm 1$ increments (so that the essential level loops are a subset of these). Note that, given $N$, $X$ is distributed as the sum of $N$ independent uniform $\pm 1$ variables. Thus, $$\E[\cos(\theta X) \mid N] = \Re \E[e^{i\theta X} \mid N] = \cos(\theta)^N.$$ As $N$ is even (since $X$ is), plugging in $\theta = \frac{2\pi}3$, we get that $\E[\cos(\tfrac{2\pi X}3) \mid N] = 2^{-N}$. Thus,
\[ \P(X \in 3\Z) = \tfrac13 + \tfrac23 \E[2^{-N}] .\]
It remains to bound $\E[2^{-N}]$ from above and below. By tuning the constants in the statement, we may assume that $n$ is sufficiently large.
To obtain the stated upper bound, it suffices to show that $\P(N \le c\log n) \le n^{-\zeta}$ for some universal constants $c,\zeta>0$. Since $N \ge M$, this follows from~\eqref{eq:large}.
For the lower bound, it suffices to show that $\P(N \le C\log n) \ge \tfrac14$ for some universal constant $C>0$. It follows from~\eqref{eq:small} that $\P(M \le C\log n) \ge \tfrac12$. To obtain the bound for $N$, note that, given $N$, $M$ is distributed as $\max \{m : T_1+\cdots+T_m \le N\}$, where $\{T_i\}$ are independent copies of $T:=\min\{ k : |\xi_1+\dots+\xi_k|=2 \}$, where $\{\xi_i\}$ are i.i.d.\ uniform $\pm 1$ variables. By Markov's inequality, $\P(M<cN) \le \P(T_1+\cdots+T_{\lceil cN \rceil} > N) \le \frac14$ for some universal constant $c>0$. Thus, $\P(N \ge (C/c)\log n) \le \frac34$, as required.

We now turn to the second inequality in the corollary.
Observe that the quantity in question is the same as $\P(X=0)-\P(X=2)$. Conditioning on $N$, we have
\begin{align*}
\P(X=0 \mid N)-\P(X=2 \mid N) &= 2^{-N}\binom{N}{N/2} - 2^{-N}\binom{N}{N/2+1} \\&= \frac{2^{-N}}{N/2+1}\binom{N}{N/2} = \Theta(N^{-3/2}) .
\end{align*}
Using the same bounds on $N$ as before yields the required inequality.
\end{proof}

\subsubsection{Proof of Proposition~\ref{lem:level-lines-main}}

We begin with two lemmas.
The first lemma asserts that if we look at the height function inside a domain containing some box $\Lambda$, then, with high probability, we will find a level loop of absolute height at least 2 surrounding a slightly smaller box. More precisely, the loop will be between the boundary of the domain (which could be far away $\Lambda$) and the boundary of a box whose size is at most a geometric number of scales smaller than $\Lambda$.

\begin{lemma}\label{lem:inside_quick}
There exists a constant $c>0$ such that the following holds. Let $k \ge 1$, let $D$ be a domain containing $\Lambda_k$ and let $(B,\kappa)$ be $|h|$-adapted boundary conditions  with $B_{\rm sym} = \partial D$ and $B_{\rm pos} = \emptyset$. 
Let $\cE_i$ be the event that the annulus $A_{2^{-i}k, 2^{-i+1}k}$ contains a level loop of $|h|\ge 2$ surrounding the origin. Then, for all $n \ge 1$ with $2^{-n}k \ge 100$,
\[ \phi^\kappa_D (\cE_1 \cup \dots \cup \cE_n) \ge 1 - e^{-cn}. \]
\end{lemma}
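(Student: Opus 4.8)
\textbf{Proof proposal for Lemma~\ref{lem:inside_quick}.}

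The plan is to show that each annular event $\cE_i$ occurs with probability bounded below by a universal constant $p>0$, conditionally on the configuration explored so far in larger annuli, so that the failure of $\cE_1,\dots,\cE_n$ is dominated by $n$ independent coin flips of success probability $p$, giving the bound $(1-p)^n \le e^{-cn}$.

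The first step is to set up the exploration: reveal the height function from the outside in, annulus by annulus. After conditioning on the region outside $\Lambda_{2^{-i+2}k}$ (and in particular on $\cE_1^c \cap \dots \cap \cE_{i-1}^c$), the law of $h$ inside $\Lambda_{2^{-i+2}k}$ is that of $\phi^{B',\kappa'}_{D'}$ for the domain $D'$ enclosed by the explored region, with some induced boundary condition $\kappa'$ on $\partial D'$ recording the revealed heights. The key point is that, because the original boundary condition is symmetric ($B_{\rm sym}=\partial D$, $B_{\rm pos}=\emptyset$) and we only reveal $|h|$ along level loops in the exploration, the induced boundary condition $(B',\kappa')$ remains $|h|$-adapted with $B'_{\rm pos}=\emptyset$ — or, more carefully, one should set up the exploration so as to reveal only level loops and their (even) heights, and use that conditioning on a level loop of $|h|=\ell$ makes the interior a homomorphism measure with $|h|=\ell$ boundary condition, which after subtracting is again symmetric. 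Then, by the monotonicity for $|h|$ (\cref{prop:FKG_modh}), $\phi^{B',\kappa'}_{D'}$ stochastically dominates (in $|h|$) the measure $\phi^0_{\Lambda}$ on any fixed box $\Lambda = \Lambda_{2^{-i+1}k} \subset D'$ with $0$ boundary condition, in the sense that $|h|$ dominates $|h^0|$ pointwise under a coupling; in particular any increasing-in-$|h|$ event, such as ``there is a level loop with $|h|\ge 2$ surrounding the origin inside $A_{2^{-i}k,\,2^{-i+1}k}$,'' is at least as likely under $\phi^{B',\kappa'}_{D'}$ as under $\phi^0_\Lambda$.

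The second step is the Russo--Seymour--Welsh input. Working under $\phi^0_\Lambda$ with $\Lambda=\Lambda_{2^{-i+1}k}$ (a box of side comparable to the annulus, with $2^{-i+1}k$ at least a large constant by the hypothesis $2^{-n}k\ge 100$), the existence of a level loop with $|h|\ge 2$ surrounding the origin in the annulus $A_{2^{-i}k,2^{-i+1}k}$ is, by the standard duality/gluing argument, implied by the existence of crossings in the four overlapping rectangles making up the annulus — specifically either a circuit of $|h|\ge 2$ built from left-right and top-bottom crossings of height $\ge 2$, or one builds it from $\times$-crossings at a fixed even level $\ge 2$; either way the $\cH_{h\ge 2}$ and $\cH^\times_{h=2}$ crossing estimates of the RSW theorem, combined via FKG for $h$ (\cref{prop:FKG_h}) to glue the crossings into a loop, give probability at least some $c(\varepsilon,R,\rho,k)>0$ depending only on the fixed aspect ratios involved, hence a universal $p>0$. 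One small point to handle: to apply the RSW theorem one wants a fixed ratio between the box and the inner radius, which is exactly the geometry here ($2^{-i+1}k$ versus $2^{-i}k$ versus the origin), and the side length $2^{-i+1}k \ge 200$ exceeds the threshold $\frac{10k'}{\varepsilon\wedge\rho}$ for fixed $k'=2$ once the constants are chosen, so the estimate applies uniformly in $i$.

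The final step assembles these: $\P(\cE_i^c \mid \cE_1^c\cap\dots\cap\cE_{i-1}^c) \le 1-p$ for each $i=1,\dots,n$, so $\phi^\kappa_D(\cE_1^c\cap\dots\cap\cE_n^c) \le (1-p)^n = e^{-cn}$ with $c=-\log(1-p)>0$. The main obstacle I expect is the bookkeeping in the first step: making precise what is revealed by the outside-in exploration so that (a) the conditional law inside is genuinely a uniform homomorphism measure with an $|h|$-adapted, $B_{\rm pos}=\emptyset$ boundary condition to which \cref{prop:FKG_modh} applies, and (b) the event $\cE_i$ is measurable with respect to the to-be-revealed interior and is increasing in $|h|$. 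This is the kind of ``standard exploration procedure'' subtlety flagged in the proof-strategy section; everything else (RSW, gluing by FKG, the geometric-decay conclusion) is routine once the domain Markov setup is pinned down.
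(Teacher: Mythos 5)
Your proposal takes essentially the same route as the paper's proof: iterate over dyadic scales, use FKG for $|h|$ (\cref{prop:FKG_modh}) to stochastically dominate a zero-boundary measure, and apply the RSW estimate~\eqref{eq:ha2} plus the four-rectangle gluing trick to get a uniform constant per scale, hence exponential decay. The ``bookkeeping obstacle'' you flag in Step~1 is resolved in the paper more simply than either of your two candidate explorations: rather than revealing $h$ itself (which would destroy the $B_{\rm pos}=\emptyset$ structure) or revealing only along level loops, one reveals the \emph{entire} absolute-value field $|h|$ on $D\setminus D_n$ (with $D_n$ an even box of radius $\approx 2^{-n+1}k$); by the domain Markov property and $B_{\rm pos}=\emptyset$, the conditional law of $|h|_{|D_n}$ is then exactly $\phi^{\kappa_n}_{D_n}(|h|\in\cdot)$ with $\kappa_n(v)=\{|h_v|,-|h_v|\}$ on $\partial D_n$, a manifestly symmetric boundary condition to which \cref{prop:FKG_modh} applies directly to give domination of $\phi^0_{D_n}(|h|\in\cdot)$ on the \emph{same} domain $D_n$ (so no cross-domain comparison to a smaller box is needed, and $\cE_n$ sits inside $D_n$ and is increasing in $|h|$). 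Everything else in your sketch agrees with the paper.
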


\begin{proof}
Observe that $\cE_i$ is measurable with respect to $|h|_{|A_i}$, where $A_i$ is the annulus $A_{2^{-i}k, 2^{-i+1}k}$.
It suffices to show that
\[ \phi^\kappa_D (\cE_n \mid \1_{\cE_1},\dots,\1_{\cE_{n-1}}) \ge c \qquad\text{almost surely}. \]
In fact, we prove the stronger statement that
\[ \phi^\kappa_D (\cE_n \mid |h|_{|D \setminus D_n}) \ge c \qquad\text{almost surely}, \]
where $D_n$ is the domain $\Lambda^{\text{e}}_{2^{-n+1}k-1}$. In words, we first explore the absolute value from the boundary of $D$ inward until we reach the outer boundary of $A_n$ and their even neighbors inside $A_n$, and then, regardless of what this exploration reveals, the conditional probability of $\cE_n$ is at least some universal constant $c$.

Toward showing this, we first argue that, when sampling from $\phi^\kappa_D$, the conditional distribution of $|h|_{|D_n}$ given $|h|_{|(D \setminus D_n) \cup \partial D_n}$ is almost surely stochastically larger than $\phi^0_{D_n}(|h| \in \cdot)$. Indeed, the domain Markov property and the assumption that the boundary condition has $B_{\rm pos} = \emptyset$ imply that the former conditional distribution is $\phi^{\kappa_n}_{D_n}(|h| \in \cdot)$, where $\kappa_n$ is the boundary condition on $\partial D_n$ that equals $\{|h_v|,-|h_v|\}$ at each $v \in \partial D_n$. The FKG for absolute value (\cref{prop:FKG_modh}) now implies that $\phi^{\kappa_n}_{D_n}(|h| \in \cdot)$ stochastically dominates $\phi^0_{D_n}(|h| \in \cdot)$.

Now observe that $\cE_n$ is increasing in $|h|$ so that the above stochastic domination implies that $\phi^\kappa_D(\cE_n \mid |h|_{|D \setminus D_n}) \ge \phi_{D_n}^0(\cE_n)$ almost surely.
Finally, the uniform lower bound on $\phi_{D_n}^0(\cE_n)$ follows from \eqref{eq:ha2} and the standard trick of using FKG (for $|h|$) to glue together crossings of four rectangles into a loop.
\end{proof}

\begin{corollary}\label{cor:inside_quick}
There exists a constant $c>0$ such that for all $k \ge 1$, all domains $D \supset \Lambda_k$ and all $n \ge 1$ with $2^{-n}k \ge 100$, 
$$
\phi_D^0(\Lambda_{2^{-n}k}\text{ is surrounded by a level loop of $|h|=2$ in $D$}) \ge 1-e^{-cn}.
$$
\end{corollary}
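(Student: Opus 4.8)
The plan is to deduce \cref{cor:inside_quick} from \cref{lem:inside_quick} by taking the boundary condition there to be the one induced by the $\phi^0_D$ measure after exploring the absolute value from $\partial D$ inward. First I would fix $k$, a domain $D \supset \Lambda_k$, and $n \ge 1$ with $2^{-n}k \ge 100$. Observe that $\phi^0_D = \phi^\kappa_D$ where $\kappa$ is the boundary condition $(\partial D,\kappa)$ with $\kappa_v=\{0\}$ for all $v\in\partial D$; this is $|h|$-adapted with $B_{\rm sym}=\partial D$ and $B_{\rm pos}=\emptyset$, since we adopted the convention that $v\in B_{\rm sym}(\kappa)$ whenever $\kappa_v=\{0\}$. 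Hence \cref{lem:inside_quick} applies verbatim and gives
\[ \phi^0_D(\cE_1 \cup \dots \cup \cE_n) \ge 1 - e^{-cn}, \]
where $\cE_i$ is the event that $A_{2^{-i}k,2^{-i+1}k}$ contains a level loop of $|h|\ge 2$ surrounding the origin.

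The remaining gap is purely bookkeeping: \cref{lem:inside_quick} produces a level loop of $|h| \ge 2$ somewhere in one of the dyadic annuli $A_{2^{-i}k,2^{-i+1}k}$ for $1 \le i \le n$, whereas \cref{cor:inside_quick} asks for a level loop of $|h|$ \emph{exactly} $2$ surrounding $\Lambda_{2^{-n}k}$. To bridge this I would argue as follows. On the event $\cE_1 \cup \dots \cup \cE_n$, pick the largest $i \le n$ for which $\cE_i$ occurs, and let $\gamma$ be the corresponding level loop, of constant height $|h| = 2\ell$ for some integer $\ell \ge 1$, surrounding the origin and contained in $A_{2^{-i}k,2^{-i+1}k} \supset$ (its inner disc) $\supset \Lambda_{2^{-n}k}$ when $i \le n$. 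If $\ell = 1$ we are already done. If $\ell \ge 2$, then since $h$ is a homomorphism, $|h|$ takes every value between $0$ and $2\ell$ along any path from $\gamma$ to a vertex of $\Lambda_{2^{-n}k}$ (using that the height at a fixed reference vertex inside, say the origin, has the parity dictated by the even-lattice convention); in particular there is a level loop of $|h|=2$ inside the disc bounded by $\gamma$, and by a standard argument (the innermost such loop surrounding the origin is a genuine $\times$-loop) it surrounds $\Lambda_{2^{-n}k}$ provided the height on $\gamma$ already differs from $2$. Rather than belabor this, the cleaner route is to re-run the proof of \cref{lem:inside_quick} but tracking, by the same random-walk-on-essential-loops idea used in \cref{sec:level-lines}, that conditioned on finding \emph{some} level loop of $|h|\ge 2$ there is, with probability bounded below, a further descent of the absolute value to exactly $2$; alternatively, and most economically, note that \cref{cor:inside_quick} as stated only needs $|h|=2$ to be \emph{achieved} on a loop surrounding $\Lambda_{2^{-n}k}$, and the innermost level loop surrounding $\Lambda_{2^{-n}k}$ on which $|h|$ first drops from a value $\ge 2$ down through $2$ — which must occur since $|h|$ at the origin is $0$ — is such a loop, contained in $D$.

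The main obstacle, then, is not probabilistic but combinatorial-topological: justifying that descending the absolute value from a surrounding loop of height $|h|\ge 2$ down to the origin necessarily passes through a \emph{surrounding} $\times$-loop of height exactly $2$ (rather than merely through vertices of height $2$ that do not close up into a loop encircling $\Lambda_{2^{-n}k}$). This is handled by the usual contour argument: the outer boundary of the set $\{|h| \ge 2\} \cap (\text{disc bounded by }\gamma)$, restricted to the even sublattice and to the connected component containing $\gamma$, contains a $\times$-loop of height $2$ separating $\Lambda_{2^{-n}k}$ from $\partial(\text{disc})$; this uses simple connectivity of $D$ and the even-lattice convention for the height function. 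Once this deterministic fact is in hand, \cref{cor:inside_quick} follows immediately from the probability estimate of \cref{lem:inside_quick} with the same constant $c$.
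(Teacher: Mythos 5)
Your first step is exactly the paper's: $\phi^0_D=\phi^\kappa_D$ with $\kappa\equiv\{0\}$ on $\partial D$ is $|h|$-adapted with $B_{\rm sym}=\partial D$ and $B_{\rm pos}=\emptyset$, so \cref{lem:inside_quick} applies directly and gives a level loop of $|h|\ge 2$ surrounding $\Lambda_{2^{-n}k}$ with probability at least $1-e^{-cn}$. The problem is in your bridge from $|h|\ge 2$ to $|h|=2$. You look for the $|h|=2$ loop \emph{inside} the loop $\gamma$ of height $|h|=2\ell$, justifying its existence by the claim that ``$|h|$ at the origin is $0$.'' That claim is false: only the boundary $\partial D$ is pinned at height $0$; the height at the origin under $\phi^0_D$ is random and delocalized (its variance is of order the logarithm of the in-radius of $D$). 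Inside the disc bounded by $\gamma$ the absolute value need never drop to $2$ at all --- it can stay at $2\ell$ or wander upward --- so no intermediate-value or contour argument in the inward direction can produce the desired loop, and the gap is not merely in the justification but in the direction of the argument.

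The correct (and the paper's) resolution is to go \emph{outward}: between $\gamma$ (where $|h|\ge 2$) and $\partial D$ (where $h=0$ by the boundary condition), the absolute value must pass through $2$, and the standard contour argument you describe in your last paragraph --- applied to the $\times$-connected component of $\{|h|\ge 2\}$ containing $\gamma$, whose outer boundary cannot reach $\partial D$ --- yields an even $\times$-loop of $|h|=2$ that surrounds $\gamma$, hence surrounds $\Lambda_{2^{-n}k}$, and lies in $D$. With that one change of direction your argument closes, with the same constant $c$ as in \cref{lem:inside_quick}. Your alternative suggestion of re-running the proof of \cref{lem:inside_quick} to track a ``further descent to exactly $2$ with probability bounded below'' is also not needed and, as stated, would not obviously preserve the $1-e^{-cn}$ bound.
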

\begin{proof}
\cref{lem:inside_quick} implies that with probability at least $1-e^{-cn}$ there is a $\times$-loop of $|h|\ge 2$ surrounding $\Lambda_{2^{-n}k}$ in $D$. Since the boundary conditions put height 0 on $\partial D$, on the former event, there must be a level loop of $|h|=2$ surrounding $\Lambda_{2^{-n}k}$ in $D$.
\end{proof}

The next lemma essentially shows that a level loop of height 2 intersecting a box cannot have long `tentacles' going very far away from the box.
A \textbf{$\times$-crossing} of an annulus inside a domain $D$ is a $\times$-path in $D$ connecting the inside and the outside boundaries of the annulus.

\begin{lemma}\label{lem:tentacles}
There exists a constant $c>0$ such that the following holds. Let $D$ be a domain and let $k \ge 100$.
For $i \ge 1$, let $\cC_i$ be the event that there exists a $\times$-crossing of height $2$ of the annulus $A_{2^{i-1} k, 2^ik}$ inside $D$.
Then, for all $n \ge 1$,
$$\phi_D^0(\cC_1 \cap \dots \cap \cC_n) \le e^{-cn}.$$
\end{lemma}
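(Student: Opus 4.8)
The plan is to realise $\cC_i$ as the \emph{failure} of a circuit event governed by the Russo--Seymour--Welsh estimate and the FKG inequality, and then to multiply the resulting bounds over the $n$ pairwise disjoint annuli by exploring $\abs h$ from the outside inward. For each $i$ fix a concentric sub-annulus $A_i := A_{2^{i-1}k,\frac32 2^{i-1}k}$ of the $i$-th annulus $A_{2^{i-1}k,2^i k}$. The combinatorial input I would isolate first is: if there is an even $\times$-circuit $\sigma$ surrounding $\Lambda_{2^{i-1}k}$, contained in $A_i\cap D$, on which $\abs h\ge 4$ everywhere, then $\cC_i$ fails. Indeed, the height changes by $0$ or $\pm 2$ along a $\times$-step between even vertices, so on $\sigma$ one has $h\ge 4$ throughout or $h\le -4$ throughout; meanwhile a height-$2$ $\times$-crossing $\gamma$ of $A_{2^{i-1}k,2^i k}$ has one endpoint inside $\sigma$ (it lies on $\partial\Lambda_{2^{i-1}k}$, which $\sigma$ surrounds) and one endpoint outside $\sigma$ (it lies on $\partial\Lambda_{2^i k}$, which is exterior to $\sigma\subset\Lambda_{\frac32 2^{i-1}k}$), so, the even $\times$-lattice being planar, $\gamma$ and the cycle $\sigma$ must share a vertex -- absurd, as that vertex would satisfy $\abs h = 2$ and $\abs h\ge 4$. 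Writing $\cB_i$ for the event that such a circuit exists, we thus have $\cC_i\subseteq\cB_i^c$.

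\textbf{A uniform lower bound on $\cB_i$.} Next I would show there is a universal $c>0$ with $\phi^0_{D'}(\cB_i)\ge c$ for every domain $D'$ satisfying $\Lambda_{2^{i-1}k}\subseteq D'\subseteq\Lambda_{2^i k}$. The event ``there is an even $\times$-path on which $\abs h\ge 4$ crossing a given rectangle'' is increasing in $\abs h$, so four such crossings can be glued into a circuit around $\Lambda_{2^{i-1}k}$ inside $A_i$ using the FKG inequality for $\abs h$ (\cref{prop:FKG_modh}); and each such rectangle crossing is implied by a $\times$-path of height exactly $4$, which by \eqref{eq:ha2} with $k=4$ exists with probability bounded below by a universal constant (using $2^{i-1}k\ge 100$ and that $D'$ is trapped between comparable boxes). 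This is the same loop-construction already used in the proof of \cref{lem:inside_quick}.

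\textbf{Chaining the annuli.} Under $\phi^0_D$ I would run the exploration of $\abs h$ from $\partial D$ inward (as in the proof of \cref{lem:inside_quick}), stopping it successively when the explored region first contains $D\setminus D_i$ together with $\partial D_i$, where $D_i$ is the largest domain contained in $D\cap\Lambda_{2^i k}$ (so $D_1\subseteq D_2\subseteq\cdots$, the sub-annuli satisfy $A_i\cap D\subseteq D_i\setminus D_{i-1}$, and $\cB_i$ becomes measurable once the frontier has passed $\partial D_{i-1}$). Let $T_i$ be the information revealed up to $\partial D_i$. Since $\partial D$ carries the symmetric boundary condition $\{0\}$ and only $\abs h$ is revealed, the revealed boundary condition on $\partial D_i$ is symmetric, so the domain Markov property and \cref{prop:FKG_modh} give that, conditionally on $T_i$, the law of $\abs h$ on $D_i$ stochastically dominates $\phi^0_{D_i}(\abs h\in\cdot)$; as $\cB_i$ is increasing in $\abs h$, the previous step yields $\phi^0_D(\cB_i\mid T_i)\ge c$ a.s., hence $\phi^0_D(\cC_i\mid T_i)\le 1-c$. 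Peeling the events off one at a time, using that $\prod_{j>i}\1_{\cB_j^c}$ is $T_i$-measurable, gives
\[
\phi^0_D\Big(\bigcap_{i=1}^n\cC_i\Big)\le \E\Big[\prod_{i=1}^n\1_{\cB_i^c}\Big]\le (1-c)\,\E\Big[\prod_{i=2}^n\1_{\cB_i^c}\Big]\le\cdots\le(1-c)^n,
\]
which is the asserted bound.

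\textbf{Main obstacle.} The delicate point is the uniform lower bound $\phi^0_{D'}(\cB_i)\ge c$ when $D'$ is irregular, and in particular thin: then there may be no surrounding circuit in $D'$ at all, and the Russo--Seymour--Welsh estimate as stated does not apply. I expect this to require two adjustments, both in the spirit of arguments already in the paper: performing a further exploration of $\abs h$ to reduce, via the comparison between boundary conditions (\cref{prop:FKG_h}), to a regular domain; and replacing the circuit $\sigma$ by a \emph{cut} of $D\cap A_{2^{i-1}k,2^i k}$ lying in $\{\abs h\ge 4\}$ and separating the inner from the outer boundary -- here one must be careful about $\times$- versus straight connectivity in the duality of paths, exactly as flagged after the statement of the Russo--Seymour--Welsh estimate.
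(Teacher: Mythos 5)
Your overall scaffolding (FKG for $|h|$, RSW, duality, chaining across scales) is in the right spirit, but your argument has a genuine gap exactly where you flag the ``main obstacle,'' and the fix you sketch does not resolve it. The paper's proof is structurally different at precisely this point, so let me explain why your version stalls and how the paper gets around it.

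Your blocking object is a $\times$-circuit $\sigma \subset A_i \cap D$ with $|h|\ge 4$. Two things go wrong when $D$ is a thin, irregular sliver inside the annulus. First, $D$ need not even contain $\Lambda_{2^{i-1}k}$ (the lemma explicitly makes no assumption about the location of $D$ --- it may merely graze some annuli), so there may be no surrounding circuit in $D$ at all: $\cB_i$ has probability zero and the inclusion $\cC_i\subseteq \cB_i^c$ is vacuous. Second, and more fundamentally, the exploration scheme forces you to use an \emph{increasing} blocking event (you need $\phi_D^0(\cB_i\mid T_i)\ge \phi_{D_i}^0(\cB_i)$, which uses that the conditional law stochastically dominates $\phi^0_{D_i}(|h|\in\cdot)$ and that $\cB_i$ is increasing). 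But when $D_i$ is thin you cannot produce, uniformly, a circuit on which $|h|$ grows to $4$ — the RSW input that produces such crossings is in regular boxes, and the comparison-of-boundary-conditions / ``replacing $\sigma$ by a cut'' patches you mention do not obviously give the required uniform constant. The right blocking object when $D$ is thin should be allowed to run along $\partial D$, where $h=0$ is enforced, and therefore should be a loop at height $0$ --- but that is a \emph{decreasing} event in $|h|$, so it cannot be lower-bounded through your exploration domination (the inequality points the wrong way).

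The paper's actual proof avoids exploration altogether and uses a ``spoiler'' boundary condition instead: one plants height $2$ on the even annuli boundaries $B_0$, uses symmetry to turn height-$2$ crossings into height-$0$ crossings under the shifted measure, and then invokes FKG for $|h|$ (with $B_{\rm pos}=\partial D$, $B_{\rm sym}=B_0$) to compare to this planted measure, which \emph{decouples} the events $\cC_i$ across annuli. Each decoupled factor is then bounded away from $1$ by duality against the existence of a $*$-loop of height exactly $0$ in the \emph{full} annulus $A_i$ (not just in $D$), the point being that such a $*$-loop can use $\partial D$ where $h=0$ and hence exists even when $D$ is thin; a final FKG comparison to the constant-$2$ boundary condition on $A_i$ plus RSW gives the uniform lower bound. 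So the two proofs share the RSW/FKG inputs but differ in the key mechanism: the paper uses comparison to a decoupling boundary condition with a height-$0$ $*$-loop as blocker, whereas you use exploration with a height-$\ge 4$ circuit as blocker --- and the latter does not survive thin domains.
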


We remark that no assumption on the location of the domain is made. In particular, it may intersect some of the annuli without containing them.

\begin{proof}
Let $A_i$ be the slightly thinner \emph{even} annulus $(\Lambda^{\text{e}}_{2^ik-10} \setminus \Lambda^{\text{e}}_{2^{i-1}k+10}) \cup \partial \Lambda^{\text{e}}_{2^{i-1}k+10}$. With a slight abuse of notation, we redefine $\cC_i$ to be the event that there exists a $\times$-crossing of height 2 of this thinner annulus inside $D$, noting that this increases the event so that it suffices to prove the stated inequality with this new definition.

Denote $D_i := A_i \cap D$ and $B := \bigcup_{i \ge 1} \partial D_i$, and let $B_0 := B \setminus \partial D$ be the portions of the boundaries of the even annuli in $D \setminus \partial D$. Consider the boundary condition $(B,\kappa)$ in which $\kappa$ equals $\{0\}$ on $\partial D$ and $\{2\}$ on $B_0$.
We first show that
\[ \phi^0_D(\cC_1 \cap \dots \cap \cC_n) \le \phi_D^{B,\kappa} (\cC_1 \cap \dots \cap \cC_n) .\]
Let $\cC'_i$ be the event that there exists a $\times$-crossing of height $0$ of the annulus $A_i$ inside $D$.
Note that, by symmetry,
\[ \phi^0_D(\cC_1 \cap \dots \cap \cC_n) = \phi^{-2}_D(\cC'_1 \cap \dots \cap \cC'_n) = \phi^2_D(\cC'_1 \cap \dots \cap \cC'_n) \]
and, similarly,
\[ \phi^{B,\kappa}_D(\cC_1 \cap \dots \cap \cC_n) = \phi^{B,\kappa'}_D(\cC'_1 \cap \dots \cap \cC'_n) ,\]
where $\kappa'$ equals $\{2\}$ on $\partial D$ and $\{0\}$ on $B_0$. Thus, it suffices to show that
\[ \phi^2_D(\cC'_1 \cap \dots \cap \cC'_n) \le \phi_D^{B,\kappa'} (\cC'_1 \cap \dots \cap \cC'_n) .\]
Since $\cC'_i$ is a decreasing event in the absolute value of $h$, this follows from the FKG for absolute value (\cref{prop:FKG_modh}), where the constant boundary condition 2 is viewed as the boundary condition $(B,\kappa'')$ in which $\kappa''$ equals $\{2\}$ on $\partial D$ and $\Z$ on $B_0$ (the latter is equivalent to having no condition on the height on $B_0$, i.e., vertices on $B_0$ are free to take any value). Here, both boundary conditions have the same partition of $B$ into $B_{\rm pos}=\partial D$ and $B_{\rm sym}=B_0$.

Observe that the boundary condition $(B,\kappa)$ decouples the events $\cC_1,\dots,\cC_n$. Thus,
\[ \phi^0_D(\cC_1 \cap \dots \cap \cC_n) \le \phi_D^{B,\kappa} (\cC_1 \cap \dots \cap \cC_n) = \phi_D^{B,\kappa}(\cC_1) \cdots \phi_D^{B,\kappa}(\cC_n) = \phi_{D_1}^{\kappa_1}(\cC_1) \cdots \phi_{D_n}^{\kappa_n}(\cC_n) ,\]
where $\kappa_i$ equals $\{0\}$ on $\partial D_i \cap \partial D$ and $\{2\}$ on $\partial D_i \setminus \partial D$. It remains to show that
\[ \phi_{D_i}^{\kappa_i}(\cC_i) \le 1-\alpha \]
for some universal constant $\alpha>0$.
The case when $D_i=A_i$ (i.e., when the domain $D$ contains the annulus $A_i$) is more straightforward than the case $D_i \subsetneq A_i$.
In order to treat the latter case in a similar way as the former (and simultaneously), it is convenient to work in the entire annulus $A_i$ rather than in the portion $D_i$ of it. To this end, with a slight abuse of notation, we extend $\kappa_i$ to $B_i=\partial D_i \cup \partial A_i$ by defining it to be $\{2\}$ on $\partial A_i \setminus \partial D_i$. Then, by the domain Markov property, $\phi_{D_i}^{\kappa_i}(\cC_i) = \phi_{A_i}^{B_i,\kappa_i}(\cC_i)$.
Now consider the event $\bar{\cC}_i$ that there exists a $\times$-crossing of height $2$ of the annulus $A_i$ (not necessarily inside $D$). Clearly, $\cC_i \subset \bar{\cC}_i$ so that $\phi_{A_i}^{B_i,\kappa_i}(\cC_i) \le \phi_{A_i}^{B_i,\kappa_i}(\bar{\cC}_i)$.
Let $\cE_i$ be the event that there exists a $*$-loop of height equal to 0 surrounding the origin inside $A_i$ (a $*$-path is a sequence of vertices in $\Z^2$ with consecutive vertices at nearest-neighbor distance equal to 2).
By duality, the events $\bar{\cC}_i$ and $\cE_i$ are disjoint Indeed, by~\cite[Lemma 2.4, first item]{duminil2019logarithmic}, a $\times$-cluster of height at least $2$ is blocked by a $*$-loop of height at most 0. Since, however, the boundary conditions on $\partial A_i$ are at least 0 everywhere (namely, 0 or~2), the latter is equivalent to having a $*$-loop of height exactly equal to 0 in the annulus. Thus, $\phi_{A_i}^{B_i,\kappa_i}(\bar{\cC}_i) \le 1 - \phi_{A_i}^{B_i,\kappa_i}(\cE_i)$. So far, we have shown that
\[ \phi_{D_i}^{\kappa_i}(\cC_i) \le 1 - \phi_{A_i}^{B_i,\kappa_i}(\cE_i) .\]

It remains to show that $\phi_{A_i}^{B_i,\kappa_i}(\cE_i)$ is uniformly bounded below.
To this end, we first argue that $\phi_{A_i}^{B_i,\kappa_i}(\cE_i) \ge \phi_{A_i}^2(\cE_i)$.
Indeed, since the event $\cE_i$ is decreasing in $|h|$, by viewing the constant 2 boundary condition as equaling $\Z$ on $B_i \setminus \partial A_i$ (i.e., taking free boundary conditions there), this follows from the FKG for absolute value.
Finally, the estimate $\phi_{A_i}^2(\cE_i) \ge \alpha$ follows from \eqref{eq:ha2} and the standard trick of using FKG (again, for $|h|$) to glue together crossings of four rectangles into a loop.
\end{proof}

We are now ready to prove \cref{lem:level-lines-main}.

\begin{proof}[Proof of \cref{lem:level-lines-main}]
For $j>i \ge 0$, let $N_{i,j}$ be the number of essential level loops contained in $A_{2^ik,2^jk}$ and let $N'_{i,j} \ge N_{i,j}$ be the number of essential level loops intersecting $A_{2^ik,2^jk}$. We are interested in lower bounding $N=N_{0,a}$ and upper bounding $N'=N'_{0,a}$. For the reader's convenience, we recall the precise statements in the proof below.

\medbreak
\noindent\textbf{Upper bound:}
We show the upper bound~\eqref{eq:small} on $N'$, namely, that for some universal constants $C,c>0$, we have
\begin{equation}\label{eq:small2}
\phi_D^0(N' \ge n) \le e^{-cn \log(\frac na \wedge k)} \qquad\text{for any }n \ge Ca .
\end{equation}
The proof follows a strategy similar to that of~\cite[Proposition 4.8]{duminil2019logarithmic} with some simplifications.
Let $D_m$ be the domain enclosed by $\cL_m$ and recall that $H_m$ is the height on $\cL_m$.
Consider the line $(k,2^ak] \times \{0\}$ crossing the annulus $A_{k,2^ak}$.
Let $\ell_m \in (k,2^ak]$ denote the distance to the origin of the right-most intersection point $(\ell_m,0)$ of $\cL_m$ with this line. Fix a large $r$, and let $\cB_m$ be the event that $\ell_m-\ell_{m+1} \le 2^{-r}\ell_m$, or equivalently, $\ell_{m+1} \ge (1-2^{-r})\ell_m$. In particular, on this event, the loops $\cL_m$ and $\cL_{m+1}$ are relatively close to one another.
Specifically, if $\cB_m$ occurs, then the loop $\cL_{m+1}$ creates a crossing of $A_{2^{-r}\ell_m,\ell_m} ((\ell_m,0)) \cap D_m$ by a $\times$-path of height $H_{m+1}$. Thus, by \cref{lem:tentacles},
\[ \phi_D^0(\cB_m \mid \cL_1,\dots,\cL_m) \le e^{-dr} ,\]
where $d>0$ is a universal constant, as long as $2^{-r}\ell_m \ge 100$.

Since $\cB_m$ occurs unless $\log \ell_m - \log \ell_{m+1} > -\log(1-2^{-r}) \ge 2^{-r}$, it is straightforward that all but at most $2^ra \log 2$ of the essential level loops $\cL_m$ intersecting $A_{k,2^ak}$ must trigger the corresponding event $\cB_m$. Thus, by the above bound on the conditional probability of $\cB_m$ and a union bound on those $m$ for which $\cB_m$ occurs, we have
\begin{equation*}
\phi_D^0 \left(N' = n \right) \le 2^n e^{-dr(n-2^ra \log 2)} .
\end{equation*}
Choosing now $r$ to be the minimum between $\log_2 (\frac n{2a})$ and $\log_2(\frac{k}{100})$ (so as to ensure that $2^{-r}\ell_m \ge 100$ always holds) and summing over $n$ yields the required bound~\eqref{eq:small2}.

\medbreak
\noindent\textbf{Lower bound:}
We now turn to the lower bound~\eqref{eq:large} on $N $, namely, that for some universal constants $C,c>0$, if $D$ contains $\Lambda_{2^ak}$, then
\[ \phi_D^0(N \le ca) \le Ce^{-ca} .\]
The main issue to overcome is the case when $D$ is much further away from the annulus in question. We need to get around the fact that the outermost essential level loop which intersects $\Lambda_{2^ak}$ is not too irregular. We divide the proof into three steps.

\smallskip
\noindent\textbf{Step 1:}
We first establish a similar bound on the number of essential level loops surrounding $\Lambda_k$. Namely,
\begin{equation}\label{eq:large2}
\phi_D^0(N_{0,\infty} \le ca) \le e^{-ca}.
\end{equation}
For $0 \le m < N_{0,\infty}$, let $S_m$ denote the largest integer $i \ge 0$ such that $\cL_m$ surrounds $\Lambda_{2^ik}$. We think of $S_m$ as the scale at which we fully discover $\cL_m$. Set $S_m=-1$ for $m \ge N_{0,\infty}$.
Note that $(S_m)_{m=0}^\infty$ is a decreasing sequence with $S_0 \ge a$ and $S_{N_{0,\infty}-1} \ge 0$.
By \cref{cor:inside_quick}, for any $m \ge 0$, the conditional distribution of $S_m-S_{m+1}$ given $\cL_1,\dots,\cL_m$ is almost surely stochastically dominated by a geometric random variable of some universal parameter $p$.
Thus,
\[ N_{0,\infty}\text{ stochastically dominates }1+\max \{ m \ge 0 : \xi_1+\cdots+\xi_m \le a\} ,\]
where $\{\xi_i\}$ are independent geometric random variables with parameter $p$. It now follows from a standard Chernoff bound that there exists a universal constant $c>0$ such that
\[ \phi_D^0(N_{0,\infty} \le ca) \le \P(\xi_1+\cdots+\xi_{\lfloor ca \rfloor} > a) \le e^{-ca} .\]

\smallskip
\noindent\textbf{Step 2:}
Next, we establish the lower bound on $N$ in the case when the domain $D$ does not contain $\Lambda_{2^{a+1}k}$. In this case, any essential level loop not contained in $\Lambda_{2^ak}$ must intersect $A_{2^ak,2^{a+1}k}$, so that $N=N_{0,\infty} - N'_{a,a+1}$. Thus,
\[ \phi_D^0(N \le \tfrac{ca}2) \le \phi_D^0(N_{0,\infty} \le ca) + \phi_D^0(N'_{a,a+1} \ge \tfrac{ca}2) \le Ce^{-ca}, \]
where the last inequality follows from~\eqref{eq:large2} and~\eqref{eq:small2}. We note for use in the next step that this inequality can be restated (by applying it with a different choice of $k$ and $a$, and readjusting the constants appropriately) as, for any $a_2 \ge a_1 \ge 0$ and any domain $D$ such that $\Lambda_{2^{a_2}k} \subset D \not\supset \Lambda_{2^{a_2+1}k}$,
\begin{equation}
\phi_D^0(N_{a_1,a_2} \le c(a_2-a_1)) \le Ce^{-c(a_2-a_1)} .\label{eq:simpler_2}
\end{equation}

\smallskip
\noindent\textbf{Step 3:}
Finally, we are ready to establish the bound for an arbitrary domain $D$ containing $\Lambda_{2^ak}$.
Let $I$ be the smallest integer $i>a/2$ such that there exists an essential level loop surrounding $\Lambda_{2^{a/2}k}$ and intersecting $\Lambda_{2^{i+1}k}$ (recall that we also consider the boundary of the domain as an essential level loop, so that $I$ is always finite).
It suffices to show that
\[ \phi_D^0(N \le ca,~I=i) \le Ce^{-ci} \qquad\text{for all }i>a/2 .\]
Fix $i>a/2$ and suppose that the event $\{I=i\}$ occurs.
Let $M_0$ denote the smallest index $0 \le m \le M$ such that the loop $\cL_m$ intersects $\Lambda_{2^{i+1}k}$, and note that this loop necessarily surrounds $\Lambda_{2^ik}$.
Let $D_0$ be the domain enclosed by $\cL_{M_0}$, and note  that $D_0$ contains $\Lambda_{2^ik}$ but not $\Lambda_{2^{i+1}k}$.
Note also that we may explore $\cL_1,\dots,\cL_{M_0}$ from the outside, so that the height function inside $D_0$ is conditionally independent by the domain Markov property.
We now consider two cases: if $i \le a$, then $N \ge N_{0,i}$, so that~\eqref{eq:simpler_2} yields the required bound, and if $i>a$, then $N_{a/2,i}=0$, so that~\eqref{eq:simpler_2} again yields the required bound.
\end{proof}

\subsection{Proof of Theorem~\ref{thm:colorings-are-locally-mixing}}
\label{sec:proof-main-thm}

Our goal is to show that the family $\cM$ of all pairs $(\mu^{ij}_D,D)$, with $D$ finite and simply connected and $i,j \in \{0,1,2\}$ distinct, is locally mixing with a power-law rate function. To this end, we fix $n \ge 1$, distinct $i,j \in \{0,1,2\}$, distinct $i',j' \in \{0,1,2\}$ and two finite and simply connected sets $D,D' \subset \Z^2$ containing $\Lambda_n$, and we aim to construct a coupling between $f \sim \mu^{ij}_D$ and $f' \sim \mu^{i'j'}_{D'}$ with the desired properties (recall \cref{def:local-mixing}). We first handle the case when $D$ and $D'$ are domains and $i=i'=0$ (in which case, $j$ and $j'$ are irrelevant), and later explain the general case. We further assume without loss of generality that one of the domains, say $D'$, is $\Lambda_n^{\text{e}}$, in which case the other domain necessarily contains it (the required coupling is then easily obtained from two such couplings).

In order not to introduce cumbersome notation, we begin by describing the coupling ``as seen from'' a single vertex $v$. The construction will involve an iteration of a 3-step procedure in which we alternate between exploring \textbf{(1)} only $f$, \textbf{(2)} both $f$ and $f'$ simultaneously, and \textbf{(3)} only $f'$. At the end of iteration $i$, we will have defined a sequence $L_0,L'_0,L_1,L'_1,\dots,L_i,L'_i$ of nested even $\times$-loops surrounding $v$ (some loops may coincide or partially overlap), where $L_0,\dots,L_i$ are color-0 loops of $f$ and $L'_0,\dots,L'_i$ are color-0 loops of $f'$, and we will have fully explored $f$ and $f'$ outside of $L_i$ and $L'_i$, respectively, but not at all inside the domains $D_i$ and $D'_i$ enclosed by them; see \cref{fig:loops}. In particular, the domain Markov property will imply that at the end of iteration $i$, the conditional distributions of $f_{|D_i}$ and $f'_{|D'_i}$ will be $\mu^0_{D_i}$ and $\mu^0_{D'_i}$, respectively.

\begin{figure}
 \centering
 \includegraphics[scale=0.4]{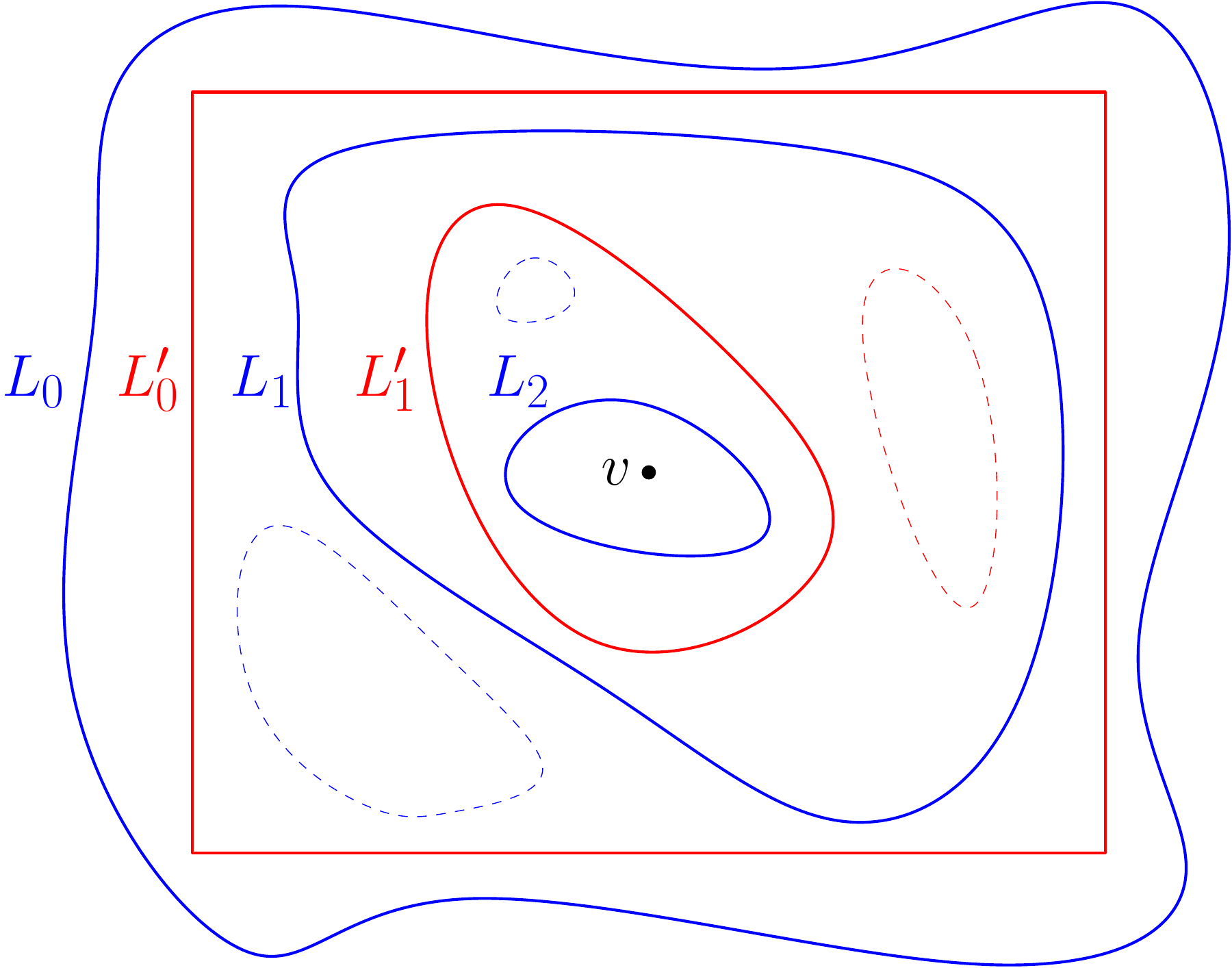}
 \caption{The sequence of nested loops $L_0,L'_0,L_1,L'_1,\dots$ surrounding $v$.}
 \label{fig:loops}
\end{figure}

Initially, we set $L_0:=\partial D$ and $L'_0:=\partial D'$, and note that $L_0$ surrounds $L'_0$ by our assumption on the domains. This is all that is done in iteration 0. Suppose that we have completed iteration $i$ and that the conditional distributions of $f_{|D_i}$ and $f'_{|D'_i}$ are $\mu^0_{D_i}$ and $\mu^0_{D'_i}$. Let us now explain how to define $L_{i+1}$ and $L'_{i+1}$. At this point, we switch to the height function representation, noting that we may view $f_{|D_i}$ as a height function $h_i$ for which $L_i$ is a level loop of height 0, and similarly, we may view $f'_{|D'_i}$ as a height function $h'_i$ for which $L'_i$ is a level loop of height 0. Thus, $h_i \sim \phi^0_{D_i}$ and $h'_i\sim\phi^0_{D'_i}$.
We now use this representation in order to describe the 3-step exploration procedure of $f$ and $f'$ in iteration $i+1$. For readability, we write $h=h_i$ and $h'=h'_i$ below.

\textbf{Step 1:} We explore the absolute value $|h|$ of the height function $h$ (independently of $h'$) up to the loop $L'_i$ (i.e., on $D_i \setminus D'_i$ and on the boundary of $D'_i$), thereby revealing a random $|h|$-adapted boundary condition for $h$ on the domain $D'_i$. By the domain Markov property, the conditional law of $h$ in $D'_i$ is that of a uniform homomorphism height function in $D'_i$ with this boundary condition (which has $B_{\rm sym}=L'_i$ and $B_{\rm pos}=\emptyset$).

\textbf{Step 2:} Since $h'$ is zero on $L'_i$, the FKG for absolute value implies that $|h|_{|D'_i}$ stochastically dominates $|h'|_{|D'_i}$ (given the exploration in the first step).
Consequently, we can construct a coupling between $|h|$ and $|h'|$ inside $D'_i$ by simultaneously exploring and revealing the values of both $|h|$ and $|h'|$, vertex by vertex, starting from the boundary of $D'_i$ inwards, ensuring along the way that $|h| \ge |h'|$ (this type of exploration is standard; see, e.g., \cite{van1994disagreement}). We explore $|h|$ and $|h'|$ in this way until we discover the outermost level loop of height 0 mod 6 for $h$ surrounding $v$ and inside $D'_i$; this loop is $L_{i+1}$ (note that it is a color-0 even $\times$-loop for $f$).

\textbf{Step 3:} The domain Markov property implies that, at this point, the conditional distributions of $|h|_{|D_{i+1}}$ and $|h'|_{|D_{i+1}}$ are that of the absolute values of uniform homomorphism height functions in $D_{i+1}$ with the corresponding $|h|$-adapted boundary conditions that have been revealed by the exploration in the previous step.
If these boundary conditions happen to be identical, then we jointly sample $h$ and $h'$ in the domain $D_{i+1}$ according to a single sample from their common distribution, and we stop the iterative procedure; in this case, we say that the $i+1$ iteration resulted in a \textbf{successful coupling for $v$}.
Otherwise, we continue exploring $|h'|$ alone (independently of $h$) until we discover the outermost level loop of height 0 mod 6 for $h'$ surrounding $v$ and inside $D_{i+1}$; this loop is $L'_{i+1}$. At this point, we have coupled the absolute values of the height functions up to $L_{i+1}$ and $L'_{i+1}$. Finally, we complete the coupling $h$ and $h'$ by coupling their signs in any manner (e.g., independently).

This completes the description of iteration $i+1$, at the end of which we have explored $f$ up to $L_{i+1}$ and $f'$ up to $L'_{i+1}$, so that by the domain Markov property, the conditional distributions of $f_{|D_{i+1}}$ and $f'_{|D'_{i+1}}$ are indeed $\mu^0_{D_{i+1}}$ and $\mu^0_{D'_{i+1}}$, as claimed. If at some iteration~$i$ we cannot find one of the loops we are looking for (i.e., $L_i$ or $L'_i$ does not exist), then we stop the iterative procedure and say that the \textbf{coupling has failed for~$v$}.
We emphasize that in iteration~$i$, when we discover the loops $L_i$ and $L'_i$, the heights of $h$ and $h'$ on these loops are some (different) multiples of 6, but that in the next iteration we then shift each of the two height functions by the corresponding amount so that the height again becomes 0 for both before we start looking for the loops $L_{i+1}$ and $L'_{i+1}$.

The actual coupling between $f$ and $f'$ does not treat $v$ as a distinguished vertex, but rather attempts to couple all vertices in $\Lambda_n$ in parallel. Specifically, whenever we tried to find the outermost level loop of height 0 mod 6 surrounding $v$, we instead find \emph{all} outermost level loops of height 0 mod 6 (with no specific target vertex). The collection of all such loops is still explorable from the outside. Thus, iteration $i$ generates for us two collections of loops $\{L_{i,j}\}_j$ and $\{L'_{i,j,j'}\}_{j,j'}$ (with $L'_{i,j,j'}$ nested inside $L_{i,j}$), and in iteration $i+1$ we recursively repeat this procedure inside each of these loops. This completes the description of the coupling between $f$ and $f'$.

\medskip

We now turn to show that the constructed coupling has the required properties.
We first check that $f_{|D \setminus \Lambda_n}$ and $f'$ are independent. Indeed, in the first step of the first iteration of the construction, we explore $f$ independently of $f'$ up to $\partial D'$ and thereby reveal $f_{|(D \setminus D') \cup \partial D'}$. Since $D' \setminus \partial D' \subset \Lambda_n$ and since, conditionally on this exploration, the law of $f'$ is still $\mu^0_{D'}$, we see that $f'$ is independent of $f_{|D \setminus \Lambda_n}$.

We now turn to the main issue at hand, namely, to show that the constructed coupling has a good chance to successfully couple any given vertex. Precisely, we need to show that, under this coupling, $\P(f(v) \neq f'(v)) \le C(n-k)^{-\alpha}$ for some $C,\alpha>0$ and any $0 \le k \le n$ and $v \in \Lambda_k$. Fix such a $k$ and $v$. By construction, $f(v)$ and $f'(v)$ are equal unless the coupling fails for $v$. Thus, it suffices to show that
\begin{equation}\label{eq:failure-bound}
\P(\text{coupling fails for }v) \le C(n-k)^{-\alpha} .
\end{equation}
We continue to use the loops $L_i$ and $L'_i$ as defined above with respect to $v$. 
Let $\cF_i$ denote the $\sigma$-algebra generated by $L_0,L'_0,\dots,L_i,L'_i$ and $f_{|(D \setminus D_i) \cup L_i}$ and $f'_{|(D' \setminus D'_i) \cup L'_i}$. Note that $\cF_i$ represents the information revealed at the end of iteration $i$.

Let us first show that it is unlikely that the coupling fails for $v$ after few iterations. Precisely, we claim that for some constants $C,c,\alpha>0$, we have
\begin{equation}\label{eq:many-iterations}
\P(\text{coupling fails for $v$ before iteration }c \log(n-k)) \le C(n-k)^{-\alpha} .
\end{equation}
To this end, let $S_i$ denote the largest $j \ge 0$ such that $L'_i$ surrounds $\Lambda_{2^j}(v)$, and set $S_i=-1$ if $L'_i$ does not exist.
Note that $S_0 \ge S_1 \ge S_2 \ge \cdots$ since the loops are nested, and that $S_0 \ge \lfloor \log_2(n-k) \rfloor$ since $\Lambda_{n-k}(v) \subset \Lambda_n \subset D'$. Note also that if the coupling fails for $v$ before iteration $i$, then $S_i=-1$. Thus, it suffices to bound $\P(S_m=-1)$, where $m := \lceil c\log(n-k) \rceil$. We claim that, for any $i \ge 0$, conditioned on $\cF_i$, the difference $S_i-S_{i+1}$ is almost surely stochastically dominated by a random variable $T$ having exponential tails. Indeed, if $S_i=s$ and $S_i-S_{i+1} \ge 2t+1$, then $t \le s/2$ and either the annulus $A_{2^{s-t},2^s}(v)$ contains no loop of height 0 mod 6 surrounding $v$ for $h_i$ or the annulus $A_{2^{s-2t},2^{s-t}}(v)$ contains no loop of height 0 mod 6 surrounding $v$ for $h'_i$.
\cref{lem:monochromatic_loop} implies that, given $\cF_i$, each of these events has probability at most $Ce^{-ct}$ for some universal constants $C,c>0$.
Therefore, letting $\{T_j\}_j$ be independent copies of $T$,
\[ \P(S_m=-1) \le \P(T_1+\cdots+T_m \ge \lfloor \log_2(n-k) \rfloor) \le C(n-k)^{-\alpha} ,\]
where the second inequality follows from a standard Chernoff bound for i.i.d.\ random variables with exponential tails by choosing $c$ small enough.

Now that we know that the coupling does not fail for $v$ before order $\log(n-k)$ iterations, we aim to show that in each such iteration there is a constant probability of a successful coupling for $v$. It will be helpful to consider two consecutive iterations at a time. Thus, we aim to show that, for some constants $K,c>0$, for any even $i \ge 0$, conditioned on $\cF_i$, on the event that $L'_i$ is at distance at least $K$ from $v$,  with probability at least $c$, the $i+2$ iteration results in a successful coupling for $v$.
This yields that, for all $i \ge 0$,
\[ \P(\text{coupling neither succeeds nor fails for $v$ before iteration }i) \le C(1-c)^i .\]
Together with~\eqref{eq:many-iterations}, this gives the required bound~\eqref{eq:failure-bound}.
The reason for considering two consecutive iterations is that, given $\cF_i$, it may happen that $L'_i$ is deep inside $L_i$. We use the first of the two iterations to gain some control on this, showing that, regardless of the relative geometry of $L_i$ and $L'_i$, there is a constant probability that $L'_{i+1}$ is not far from $L_{i+1}$. In the next iteration, conditioning on $\cF_{i+1}$, we may then assume that we are on this good event, in which case we will be able to show that there is a constant probability that $L_{i+2}$ is a color-0 loop for both $f$ and $f'$ (in fact, a loop of height 0 for both $h_{i+1}$ and $h'_{i+1}$), resulting in a successful coupling for $v$. We now make this precise.

Condition on $\cF_i$ and consider the loops $L_i$ and $L'_i$.
By what we have showed above about $S_i-S_{i+1}$, with probability at least $1/2$, we have that
\begin{equation}\label{eq:controlled-loops}
\Lambda_{2^{s-a}}(v) \subset D'_{i+1} \subset D_{i+1} \subset D'_i \not\supset \Lambda_{2^{s+1}}(v)
\end{equation}
for some universal constant $a$, where $s:=S_i$.
Now condition on $\cF_{i+1}$ and assume that~\eqref{eq:controlled-loops} occurs. Then \cref{lem:level_loop2} implies that $L_{i+2}$ exists and is a level loop of height 0 for $h_{i+1}$ with probability at least $c$ for some universal constant $c>0$, as long as $2^{s-a}$ is larger than some universal constant $K'$ (which is ensured by choosing $K=2^{a+1}K'$). Now observe that the domination maintained in the second step of the construction of the coupling implies that $L_{i+2}$ must also be a level loop of height 0 for $h'_{i+1}$, implying that iteration $i+2$ resulted in a successful coupling for $v$. Thus, there is probability at least $c/2$ that the $i+2$ iteration results in a successful coupling for $v$. This finishes the proof that the constructed coupling has the two properties required by the local mixing condition.

We have shown above that the family $\cM' \subset \cM$ consisting of all pairs $(\mu^0_D,D)$, where $D$ is a domain, is locally mixing with a power-law rate function. It remains to explain that $\cM$ is locally mixing with such a rate function.
Suppose that $f \sim \mu^{ij}_D$ for general $D,i,j$. We may still assume as before that $D'$ is the domain $\Lambda^{\text{e}}_n$ and that $i'=0$. The above proof applies to this situation as is, with the only difference being that $L_0$ is no longer a color-0 loop for $f$ so that when we appeal to \cref{lem:monochromatic_loop} in the first iteration (i.e., when arguing that $L_1$ is discovered quickly), we need to use the full strength of the corollary (the moreover part). In fact, this shows more, namely, that the larger class $\cM'' \supset \cM$ consisting of all pairs $(\mu^\xi_D,D)$ with $D$ finite and simply connected and $\xi$ a feasible boundary condition with bounded oscillation (as in the sense of \cref{rmk:sqrtlog}) is also locally mixing with a power-law rate function.
\qed

\section{F{\o}lner independence implies local mixing}
\label{sec:FI}

In this section, we complete the proof of \cref{prop:locally-mixing-and-FI}, by showing that any translation-invariant measure $\mu$ on $\cA^{\Z^d}$ that is F{\o}lner independent is also locally mixing.

Suppose that $\mu$ satisfies the definition of F{\o}lner independence (\cref{def:FI}) with $\ve=\tilde{\rho}(n)$ for some rate function $\tilde{\rho}$. We shall show that $\mu$ is locally mixing with rate function $2\rho$ given by $\rho(n) := 4\tilde{\rho}(\lfloor \frac{n}{12} \rfloor)$. Thus, we fix $n \ge 1$ and aim to construct a coupling between two samples of $\mu$ with the two properties required by the definition of local mixing (\cref{def:local-mixing}). To avoid measure-theoretic technicalities, we also fix $N \gg n$ and construct the coupling between two samples of $\mu_{|\Lambda_N}$ (with the bound on the probability of disagreement independent of $N$). Taking any subsequential limit of these couplings as $N \to \infty$ will yield the required coupling.
Thus, it suffices to construct a coupling between $f \sim \mu_{|\Lambda_N}$ and $f' \sim \mu_{|\Lambda_n}$ such that $f_{|\Lambda_N \setminus \Lambda_n}$ and $f'$ are independent and $\P(f(v) \neq f'(v)) \le 2\rho(k)$ for any $0 \le k \le n$ and $v \in \Lambda_{n-k}$. In turn, it suffices to construct a measure $\nu$ on $\Lambda_n$ and a coupling between $f \sim \mu_{|\Lambda_N}$ and $f' \sim \nu$ such that $f_{|\Lambda_N \setminus \Lambda_n}$ and $f'$ are independent and $\P(f(v) \neq f'(v)) \le \rho(k)$ for any $0 \le k \le n$ and $v \in \Lambda_{n-k}$.

Throughout the proof, we redefine $\Lambda_k$ to be the box $\{-k+1,\dots,k\}^d$ so that it has side-length $2k$ and volume $(2k)^d$. This is merely for notational convenience, so that $\Lambda_k$ perfectly tiles $\Lambda_{mk}$ for any integer $m \ge 1$. The notions of local mixing and F{\o}lner independence are clearly unaffected by this change. We also let $\Lambda_0$ denote the singleton consisting of the origin.

By the choice of $\tilde{\rho}$, for any $k \ge 0$, there is a collection of couplings $(\pi_k^\tau)_{\tau \in \cA^{\Lambda_{2N} \setminus \Lambda_k}}$ between $\mu(f_{|\Lambda_k} \in \cdot \mid f_{|\Lambda_{2N} \setminus\Lambda_k}=\tau)$ and $\mu|_{\Lambda_k}$ such that $\frac{1}{|\Lambda_k|} \sum_{v \in \Lambda_k} \pi_k^\tau(f(v) \neq f'(v)) \le \tilde{\rho}(k)$ for all $\tau$ but a set of $\mu_{|\Lambda_{2N} \setminus\Lambda_k}$-measure at most $\tilde{\rho}(k)$. By sampling $f_{|\Lambda_{2N} \setminus \Lambda_k}$ from $\mu_{|\Lambda_{2N} \setminus \Lambda_k}$ and then sampling from $\pi_k^{f_{|\Lambda_{2N} \setminus \Lambda_k}}$, this gives a coupling $\pi_k$ of $f \sim \mu_{|\Lambda_{2N}}$ and $f' \sim \mu_{|\Lambda_k}$ such that $f_{|\Lambda_{2N} \setminus \Lambda_k}$ and $f'$ are independent and
\[ \frac{1}{|\Lambda_k|}\sum_{v \in \Lambda_k} \pi_k(f(v) \neq f'(v)) \le 2\tilde{\rho}(k) .\]
We aim to construct such a coupling (with $\Lambda_k$ replaced by $\Lambda_n$) in which a similar such bound holds term by term, not just on average.

We extend the collection $(\pi_k^\tau)$ to include $\tau$ which are defined on any subset of $\Lambda_{2N} \setminus \Lambda_k$, by averaging over the values on the remaining part outside of $\Lambda_k$. That is, if $S \subsetneq \Lambda_{2N} \setminus \Lambda_k$, then for $\tau \in \cA^S$, we define $\pi^\tau_k(\cdot) := \E[\pi_k^{\xi_{|\Lambda_{2N} \setminus \Lambda_k}}(\cdot)]$, where $\xi \sim \mu(\cdot \mid \tau)$.
Observe that for any such $S$, if $\tau \sim \mu_{|S}$, then for any $v \in \Lambda_k$,
\[ \E[\pi^\tau_k(f(v) \neq f'(v))] = \pi_k(f(v) \neq f'(v)) =: p_{k,v} .\]
Note that the above would not necessarily hold if instead of the above averaging we were to appeal to F{\o}lner independence again (which would yield an unrelated $\pi_k^\tau$).

We now also extend the collection $(\pi_k^\tau)$ to allow translates of $\Lambda_k$ as follows.
Let $B=\Lambda_k+b$ be a box centered at $b$ and suppose that $B \subset \Lambda_n$.
For a boundary condition $\tau$ defined on a subset of $S$ of $\Lambda_N \setminus B$, we define $\pi_B^\tau$ to be the coupling between $\mu(f_{|B} \in \cdot \mid f_{|\Lambda_N \setminus B}=\tau)$ and $\mu_{|B}$ obtained by translating $B$ and $\tau$ to the origin, applying the appropriate coupling, and translating back. Precisely, define $\pi_B^\tau(E) := \P((f_{v-b},f'_{v-b})_{v \in B} \in E)$ for any $E \subset \cA^B \times \cA^B$, where $(f,f') \sim \pi_k^{\tau'}$ and $\tau' \in \cA^{S-b}$ is defined by $\tau'_{v-b}:=\tau_v$ for $v \in S$. Note that this is well defined since $\tau'$ is defined on $S-b$ which is a subset of $\Lambda_{2N} \setminus \Lambda_k$, and that this is a coupling between the two claimed measures by the translation-invariance of $\mu$.

Let $\cB=\{B_1,\dots,B_\ell\}$ be a partition of $\Lambda_n$ into boxes (of various sizes). We construct a coupling ${\sf P}_\cB$ of $f \sim \mu$ and $f' \sim \nu_\cB := \mu|_{B_1} \times \cdots \times \mu|_{B_\ell}$ as follows.
Let $k_1,\dots,k_\ell$ be the sizes of the boxes and let $b_1,\dots,b_\ell$ be their centers, so that $B_i=\Lambda_{k_i}+b_i$ for all $i$.
Denote $B_0 := \Lambda_N \setminus \Lambda_n$.
First, sample $f_{|B_0}$.
Next, conditioned on $f_{|B_0}$, sample $(f_{|B_1},f'_{|B_1})$ from $\pi_{B_1}^{f_{|B_0}}$. Now suppose we have already sampled $f$ on $B_0 \cup \cdots \cup B_{i-1}$ and $f'$ on $B_1 \cup \cdots \cup B_{i-1}$, and conditioned on this, sample $(f_{|B_i},f'_{|B_i})$ from $\pi_{B_i}^{f_{|B_0 \cup \cdots \cup B_{i-1}}}$. It is straightforward that this procedure defines a pair $(f,f')$ such that $f \sim \mu$ and $f' \sim \nu_\cB$, and such that $f_{|B_0}$ is independent of $f'$. 
Furthermore,
\begin{equation}\label{eq:folner-indep-coupling}
{\sf P}_\cB(f(v) \neq f'(v)) \le p_{k_i,v-b_i} \qquad\text{for any }1 \le i \le \ell\text{ and }v \in B_i .
\end{equation}

We define a coupling $\sf P$ between $f \sim \mu$ and $f' \sim \nu$ (with $\nu$ defined below) by choosing $\cB$ randomly and then applying ${\sf P}_\cB$ (independently of $\cB$). We construct $\cB$ as follows.
Let $m:=\lfloor \log_4 n \rfloor$ and choose a uniformly random $x \in \Lambda_{4^m}$. For every integer $i$ between $0$ and $m$, and in decreasing order (that is, starting from $i=m$), extend $\Lambda_{4^i}+x$ to a tiling of $\Z^d$ by translates of $\Lambda_{4^i}$, and add to $\cB$ those boxes of the tiling that are at distance at least $4^i$ from $\Lambda_n^c$ and disjoint from all boxes already in $\cB$. At the end of this procedure, any vertex of $\Lambda_n$ that is not finally covered by a box in $\cB$ is added to $\cB$ as a singleton. We also order the boxes in $\cB$ arbitrarily.
This yields a coupling $\sf P$ between $f \sim \mu$ and $f' \sim \nu := \E[\nu_\cB]$.

\begin{figure}
 \centering
 \includegraphics[scale=0.4]{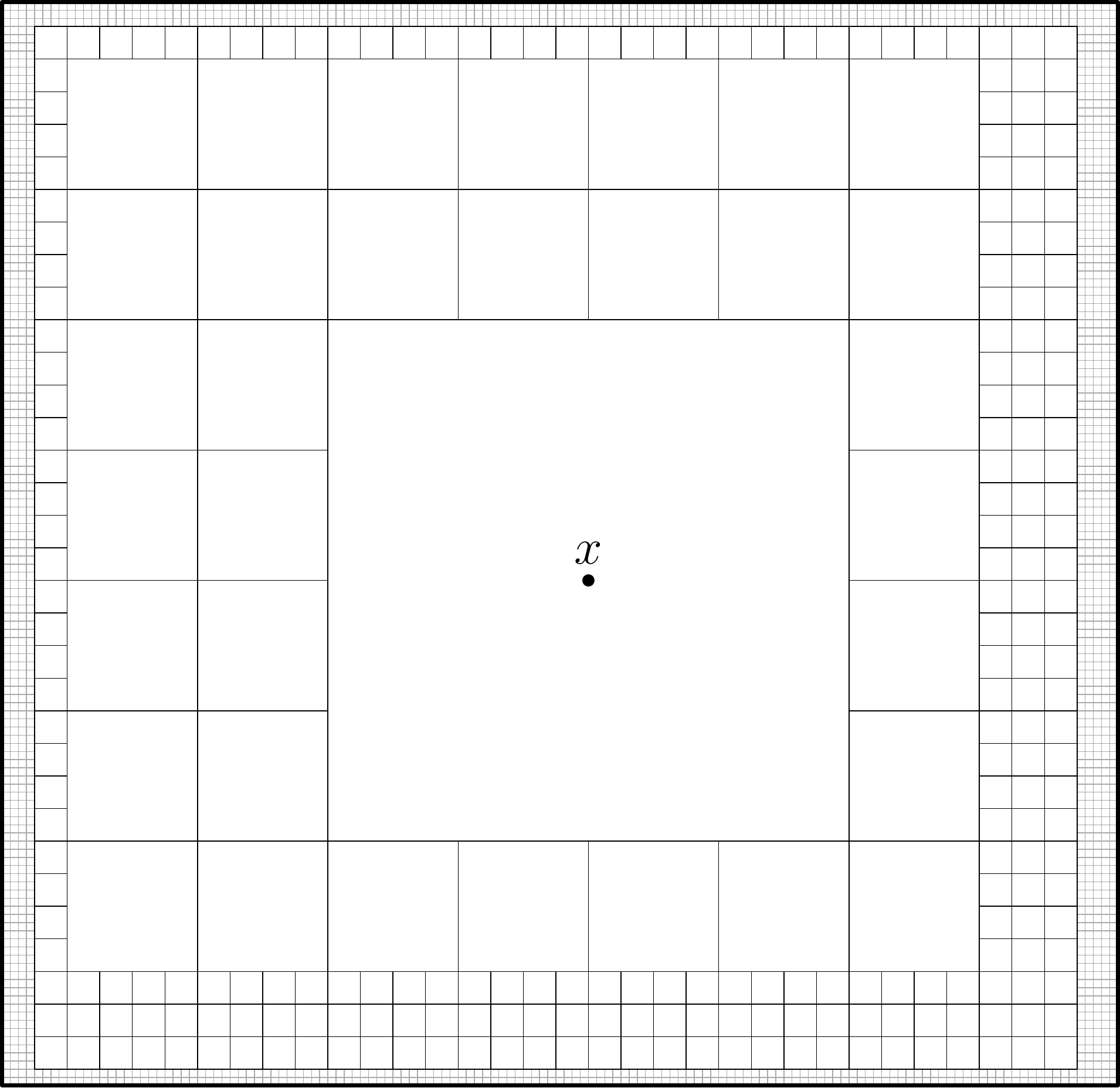}
 \caption{The random partition of $\Lambda_n$ into boxes.}
 \label{fig:box_partition}
\end{figure}

By construction, under this coupling, $f_{|\Lambda_N\setminus\Lambda_n}$ and $f'$ are independent. It remains to show that ${\sf P}(f(v) \neq f'(v)) \le \rho(k)$ for any $0 \le k \le n$ and vertex $v \in \Lambda_{n-k}$.
We may assume that $k \ge 12$ as otherwise $\rho(k)>1$ and there is nothing to prove.

Suppose first that $3 \cdot 4^i < k < 4^{i+1}$ for some $1 \le i \le m$. In this case, whatever $x$ happens to be, $v$ always belong to a box $B_v \in \cB$ of size $4^i$. Since $x$ is chosen uniformly in $\Lambda_{4^m}$ and $4^m$ is a multiple of $4^i$, it is easy to see using~\eqref{eq:folner-indep-coupling} that
\[ {\sf P}(f(v) \neq f'(v)) \le \frac{1}{|\Lambda_{4^i}|} \sum_{u \in \Lambda_{4^i}} p_{4^i,u} \le 2\tilde{\rho}(4^i) \le 2\tilde{\rho}(\tfrac k4) \le \rho(k) .\]
Otherwise, $4^i \le k \le 3 \cdot 4^i$ for some $1 \le i \le m-1$. In this case, depending on the value of $x$, the box $B_v \in \cB$ to which $v$ belongs has size either $4^i$ or $4^{i-1}$. For each $u \in \Lambda_{4^i}$, let $a_u$ denote the number of choices for $x$ such that $B_v$ is a box of size $4^i$ and $v \in u+4^i \Z^d$, and similarly, for each $u \in \Lambda_{4^{i-1}}$, let $b_u$ denote the number of choices for $x$ such that $B_v$ is a box of size $4^{i-1}$ and $v \in u+4^{i-1} \Z^d$. Then, using~\eqref{eq:folner-indep-coupling}, we obtain that
\[ {\sf P}(f(v) \neq f'(v)) \le \frac{1}{|\Lambda_{4^m}|} \left[ \sum_{u \in \Lambda_{4^i}} a_u p_{4^i,u} + \sum_{u \in \Lambda_{4^{i-1}}} b_u p_{4^{i-1},u} \right] .\]
It is not hard to see that there exist $a$ and $b$ such that $a_u \in \{a,a+1\}$ for all $u \in \Lambda_{4^i}$ and $b_u \in \{b,b+1\}$ for all $u \in \Lambda_{4^{i-1}}$. Using the bounds $a_u \le a+1$, $b_u \le b+1$, $a|\Lambda_{4^i}|+b|\Lambda_{4^{i-1}}| \le |\Lambda_{4^m}|$ and $|\Lambda_{4^i}|+|\Lambda_{4^{i-1}}|\le|\Lambda_{4^m}|$, yields that 
\[ {\sf P}(f(v) \neq f'(v)) \le 4\tilde{\rho}(4^{i-1}) \le 4\tilde{\rho}(\tfrac{k}{12}) \le \rho(k) . \qedhere \]

\bigbreak
\paragraph{\textbf{Acknowledgements.}} We thank Nishant Chandgotia, Tom Meyerovitch and Ron Peled for several fruitful discussions. We also thank Tom for suggesting this question. The first author thanks Benoit Laslier for some stimulating conversations.
Research of GR was supported in part by NSERC 50311-57400 and University of Victoria start-up 10000-27458.
Research of YS was supported in part by NSERC of Canada.

\bibliographystyle{amsplain}
\bibliography{library}

\end{document}